\documentclass[12pt]{article}
\usepackage{amsthm,amssymb,amsmath}
\usepackage{fullpage}
\usepackage[T1]{fontenc}
\usepackage{graphicx}
\usepackage{subfigure}
\usepackage{morefloats}



\usepackage[usenames,dvipsnames]{xcolor}
\usepackage[pdfusetitle,
bookmarks=true,bookmarksnumbered=true,bookmarksopen=true,bookmarksopenlevel=1,
breaklinks=false,pdfborder={0 0 
  0},backref=false,colorlinks=true,linkcolor=NavyBlue,citecolor=NavyBlue,urlcolor=NavyBlue]
{hyperref}
\pdfpageheight\paperheight
\pdfpagewidth\paperwidth

\usepackage[capitalise]{cleveref}
\crefformat{equation}{(#2#1#3)}

\numberwithin{equation}{section}

\newtheorem{theorem}{Theorem}[section]
\newtheorem{lemma}[theorem]{Lemma}

\newtheorem{corollary}[theorem]{Corollary}
\newtheorem{proposition}[theorem]{Proposition}

\newtheorem{definition}[theorem]{Definition}

\theoremstyle{definition}
\newtheorem{remark}[theorem]{Remark}
\newtheorem*{note*}{Note}





\newcommand{\R}{\mathbb{R}}
\renewcommand{\H}{\mathbb{H}}
\newcommand{\C}{\mathbb{C}}
\newcommand{\Z}{\mathbb{Z}}
\newcommand{\T}{\mathbb{T}}
\newcommand{\cB}{\mathcal{B}}

\newcommand{\cP}{\mathcal{P}}
\newcommand{\cE}{\mathcal{E}}
\newcommand{\cI}{\mathcal{I}}
\newcommand{\cS}{\mathcal{S}}
\renewcommand{\epsilon}{\varepsilon}

\renewcommand{\Im}{\operatorname{Im}}
\renewcommand{\Re}{\operatorname{Re}}

\DeclareMathOperator{\mes}{mes}
\DeclareMathOperator{\Vol}{Vol}

\DeclareMathOperator{\DC}{DC}
\DeclareMathOperator{\Car}{Car}
\DeclareMathOperator{\Proj}{Proj}

\newcommand{\norm}[1]{\lVert#1\rVert} 
\newcommand{\hsnorm}[1]{\lVert#1\rVert_{\mathrm{HS}}}

\begin{document}

\title{Non-Perturbative Localization with Quasiperiodic Potential in
  Continuous Time}

\author{Ilia~Binder\footnote{I.B. was supported by an NSERC Discovery grant},
  Damir~Kinzebulatov and Mircea~Voda}

\date{}

\maketitle

\begin{abstract}
  We consider continuous one-dimensional multifrequency Schr\"odinger
  operators, with analytic potential, and prove Anderson localization
  in the regime of positive Lyapunov exponent for almost all phases
  and almost all Diophantine frequencies.
\end{abstract}

\tableofcontents

\section{Introduction}
\label{sec:intro}
We consider the family of operators on $ L^2(\R) $ given by
\begin{equation}\label{eq:Schroedinger}
  [H(\theta,\omega)y](t)=-y''(t)+V(t,\theta+t \omega)y(t),
\end{equation}
where the potential $V:\T\times\T^d\to \R $ is analytic
($ \T:=\R/\Z $, $ d\ge 1 $), $ \theta \in \T^d $, and $ \omega $
satisfies a Diophantine condition. More precisely we will work with
frequency vectors in a set defined by
\begin{equation*}
  \DC := \{ \omega\in \T^d : \norm{k\cdot \omega}\ge c |k|^{-A}, k\in \Z^d\setminus
  \{ 0  \}  \}.
\end{equation*} 
for some $ A>d $. We used $ \norm{\cdot} $ to denote the distance to
the nearest integer and $ |\cdot| $ for the $ \sup $-norm on $ \Z^d $.
We will use $ L(\omega,E) $ to denote the Lyapunov exponent associated
with our operators (see \cref{sec:transfer-matrix-formalism} for the
definition).  Our main result is as follows.

\begin{theorem}\label{thm:localization}
  Assume that $ L(\omega,E)>0 $ for all
  $ (\omega,E)\in \DC\times [E',E''] $.  Then for almost all phases
  $ \theta\in \T^d $ and almost all frequency vectors
  $ \omega\in \DC $ the part of the spectrum of $ H(\theta,\omega) $
  contained in $ [E',E''] $ is pure point with exponentially decaying
  eigenfunctions.
\end{theorem}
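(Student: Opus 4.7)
The plan is to follow the Bourgain--Goldstein localization scheme, adapted from the discrete multifrequency setting to the present continuous-time one. A convenient first step is to pass to unit-time sampling: write the transfer matrix over $[0,N]$ as $M_N(\theta,\omega,E)=\prod_{j=N-1}^{0} M_1(\theta+j\omega,\omega,E)$, where $M_1$ is the unit-time transfer matrix, which depends analytically on $\theta\in\T^d$ because $V$ does. The core quantitative input is then a large deviations theorem: using subharmonicity of $\theta\mapsto \log\|M_N(\theta+i\eta,\omega,E)\|$ on a complex strip, Riesz decomposition, and the Diophantine structure of $\omega$, one establishes
\begin{equation*}
  \mes\Bigl\{\theta\in\T^d : \bigl|\tfrac{1}{N}\log\|M_N(\theta,\omega,E)\|-L_N(\omega,E)\bigr|>N^{-\tau}\Bigr\}<e^{-N^{\sigma}},
\end{equation*}
uniformly in $(\omega,E)\in\DC\times[E',E'']$, with some fixed exponents $\tau,\sigma>0$. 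Combined with the positivity of $L(\omega,E)$, this is the starting point for everything that follows.

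Next I would iterate the avalanche principle at a geometric sequence of scales to derive sharp Green's function estimates. Writing the Dirichlet Green's function $G_{[-N,N]}(t,s;\theta,\omega,E)$ via the Wronskian/Poisson representation in terms of transfer matrices reduces its decay to (i) lower bounds on $\|M_N\|$ (supplied by the LDT) and (ii) upper bounds on a Fredholm determinant whose zeros in $E$ are the Dirichlet eigenvalues on $[-N,N]$. A Weierstrass preparation analysis in $E$, combined with analyticity and subharmonicity in $\theta$, then yields exponential decay $|G_{[-N,N]}(t,s;\theta,\omega,E)|\lesssim e^{-(L-\epsilon)|t-s|}$ for $(\theta,E)$ outside a semi-algebraic set of exponentially small measure.

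The hard part, and the heart of the argument, is the elimination of double resonances in $\omega$: one must rule out, for a full-measure set of $\omega\in\DC$, the simultaneous failure of Green's function estimates on two well-separated boxes $[-N,N]$ and $[x-N,x+N]$, $|x|\gg N$, at the same $(\theta,E)$. Here I would approximate the exceptional sets in $(\theta,\omega,E)$-space by semi-algebraic sets of polynomial complexity, using Fourier truncation of $V$ and Cartan-type estimates for analytic functions to control the approximation error, and then apply Bourgain's semi-algebraic covering/projection lemma. The Diophantine condition is then used decisively to push the bad set in $\omega$ below each dyadic scale, after which a Borel--Cantelli sum produces a full-measure set of good frequencies. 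Once double resonances have been eliminated, a Poisson-formula argument shows that any polynomially bounded solution $y$ of $H(\theta,\omega)y=Ey$ with $E\in[E',E'']$ must decay exponentially at both $\pm\infty$: iterating the formula on dyadic boxes centered far from a reference point and using the Green's function decay, the value of $y$ at that point is forced to be exponentially small. Schnol's theorem then concludes pure point spectrum in $[E',E'']$ with exponentially decaying eigenfunctions for almost every $(\theta,\omega)$.
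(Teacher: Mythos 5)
Your overall strategy correctly follows the Bourgain--Goldstein scheme (large deviations, Green's function decay, semialgebraic elimination of double resonances, Borel--Cantelli, Schnol), and this is also the paper's roadmap. But there is a genuine gap at the semialgebraic approximation step, and it is precisely the place where the continuous setting forces a departure from the discrete template.

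You propose to obtain semialgebraic sets of controlled degree by ``Fourier truncation of $V$.'' In the discrete case this works because the transfer matrix over $[1,N]$ is a finite product $\prod_j \begin{bmatrix} V(\theta+j\omega)-E & -1 \\ 1 & 0\end{bmatrix}$, so truncating $V$ to a trigonometric polynomial makes the entries of $M_N$ into literal polynomials in $(e(\theta),e(\omega),E)$. In the continuous case $M_I$ is the fundamental solution of the ODE $Y'=A(t)Y$ with $A$ depending on $V(t,\theta+t\omega)$; even if $V$ is a trigonometric polynomial, the entries of $M_I$ are not polynomials (or trigonometric polynomials) in $(\theta,\omega,E)$. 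So the discrete device of ``approximate the potential'' does not yield semialgebraic sets here. The paper instead approximates the entries of $M_I$ directly as analytic functions of $(\theta,\omega,E)$ on a product domain, and --- because the dependence on $\omega$ is analytic but not periodic --- one cannot use Fourier series in the frequency variable either; the paper uses Faber series on real intervals to get a single approximating polynomial on the whole frequency range (Proposition~\ref{prop:Faber-approximation}, Lemma~\ref{lem:approximation}). This is the missing ingredient in your sketch. Relatedly, the discrete elimination set-up built on the equation $v_{a'}(b')=0$ (the ``Dirichlet determinant'' analogue) degrades into an inequality once one passes to the polynomial approximation of $M_J$ rather than of $V$, which is why the paper must instead control the measure of the projected bad set via Cartan's estimate applied to $\hsnorm{M_I}^2$ (Proposition~\ref{prop:Cartan-E}, Remark~\ref{rem:discrete-vs-continuous-elimination}) --- a step with no counterpart in the discrete argument. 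Your mention of Cartan estimates ``to control the approximation error'' points at the wrong use of that tool; the approximation error is controlled by the Faber/Bernstein--Walsh estimate, while Cartan controls the set where $\log\norm{M_I}$ can drop. Finally, your ``Fredholm determinant / Weierstrass preparation in $E$'' route to Green's function decay is not what the paper does and is not needed: the paper derives decay directly from the LDT by a four-case analysis on which entry of $M_I$ realizes the norm, shrinking the interval slightly in three of the cases (Proposition~\ref{prop:Green-decay}); this is a point where the continuous case requires a real (but more elementary) argument rather than the triviality of the discrete case.
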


Non-perturbative localization results (in the sense that one only requires positivity of the Lyapunov exponent)
are well known for discrete Schr\"odinger operators, dating back to work by Jitomirskaya
\cite{Jit99} for the Almost Mathieu operator and by Bourgain, and
Goldstein \cite{BouGol00} for general analytic potentials. For continuous Schr\"odinger operators the only known
result, due to
Fr\"ohlich, Spencer, and Wittwer \cite{FroSpeWit90}, deals with
potentials of the form
\begin{equation}\label{eq:cos-potential}
  K^2(\cos(2\pi t)+\cos(2\pi(\theta+t\omega)))
\end{equation}
with $ K $ sufficiently large. At the same time, there was no reason to expect that the discrete results don't carry
to the continuous case (indeed, \cite{FroSpeWit90} treats both the discrete and the continuous cases).
Our motivation for considering this problem stems from the recent work on the inverse spectral theory for continuous
quasiperiodic Schr\"odinger operators started by Damanik and Goldstein \cite{DamGol14}. Their work is in a
perturbative setting (assuming a small coupling constant) and it is natural to try to extend it to a
non-perturbative setting. On one hand, one can try to prove the results of  \cite{DamGol14} in the discrete case and
then make use of the non-perturbative theory available there (though, it is known that for the inverse spectral
problem one should consider Jacobi operators instead of Schr\"odinger operators). On the other hand, one is
motivated to develop the non-perturbative theory in the continuous setting. Our work is a step in this direction.

The fact that \cref{thm:localization} is non-vacuous, i.e. there exists a portion
of the spectrum where the theorem applies, follows from work on the
positivity of the Lyapunov exponent, by Sorets and Spencer
\cite{SorSpe91} for the case when the potential is of the form \cref{eq:cos-potential}
and by Bjerkl\"ov \cite{Bje06} for general analytic potentials that
assume their minimum value only finitely many times. The result of \cite{Bje06} is perturbative,
in the sense that the largeness of the coupling constant depends on the frequency vector. It would be interesting
to see whether it is possible to obtain a non-perturbative result on the positivity of the Lyapunov exponent
assuming only that $ V $ is non-constant. Such results are known in the discrete case (see \cite{SorSpe91} for
$ d=1 $, and \cite{Bou05a} for $ d\ge 1 $). We note that such a result in the continuous case may not be completely
non-perturbative, because it is well known that the Lyapunov exponent vanishes at high energies (see \cite{Eli92}),
and it seems that the transition between energies with positive Lyapunov
exponent and energies with zero Lyapunov exponent is of a perturbative nature (see \cite[Rem. 1.2]{YouZho14}).

The proof of \cref{thm:localization} follows along the same lines as in the
discrete case. The main ingredients are a result on the exponential
decay of the finite interval Green's function (see
\cref{prop:Green-decay}) and a result on elimination of resonances
(see \cref{prop:elimination-fixed-phase}). The basis for these results is
a large deviations estimate for the logarithm of the norm of the transfer matrix
(see \cref{thm:ldt}), which one obtains immediately from the discrete case. 
The fact that the large deviations estimate implies the decay of Green's function (on some interval) is
trivial in the discrete setting, but not in the continuous setting, as can be seen from the proof of
\cref{prop:Green-decay}. 
To eliminate the resonances we use the strategy of \cite{BouGol00} based on semialgebraic sets.
The main difference from the discrete case is that to obtain semialgebraic sets it is not
enough to approximate the potential by a polynomial. Instead we have to directly approximate
the entries of the transfer matrix. This leads to a different set-up for the elimination of
resonances (see \cref{rem:discrete-vs-continuous-elimination}) which, unlike the discrete case,
requires the use of Cartan's estimate as in \cite{GolSch08}. Furthermore, the approximating polynomials for
the entries of the transfer matrix cannot be obtained via Fourier series as in the discrete case. We use Faber
series instead.

The structure of the paper is as follows. The definition and all of
the needed properties for the transfer matrix and the Laypunov
exponent are presented in \cref{sec:transfer-matrix-formalism} and
\cref{sec:transfer-matrix-properties}. The exponential decay of
Green's function is established in \cref{sec:decay}. \cref{sec:Cartan}
and \cref{sec:semialgebraic} contain all the preliminary work needed
for the elimination of resonances result from
\cref{sec:elimination}. Finally the proof of \cref{thm:localization}
is obtained in \cref{sec:main-proof}.

\section{Transfer Matrix
  Formalism}\label{sec:transfer-matrix-formalism}

Consider the eigenvalue equation
\begin{equation}\label{eq:eigenvalue-equation}
  -y''(t)+V(t,\theta+t\omega) y(t) = E y(t),\quad  \theta\in \T^d.
\end{equation}
Let
$ u_a=u_a(\cdot;\theta,\omega,E),
v_a=v_a(\cdot;\theta,\omega,E) $
be the solutions of \cref{eq:eigenvalue-equation} satisfying
\begin{equation}\label{eq:initial-conditions}
  u_a(a)=1,u_a'(a)=0,\quad v_a(a)=0,v_a'(a)=1.
\end{equation}
Any solution $ y $ of \cref{eq:eigenvalue-equation} satisfies
\begin{equation*}
  \begin{bmatrix}
	y(b) \\ y'(b)
  \end{bmatrix}
  = M_{[a,b]} \begin{bmatrix}
	y(a) \\ y'(a)
  \end{bmatrix},\quad a\le b
\end{equation*}
where the transfer matrix is defined by
\begin{equation*}
  M_{[a,b]}=M_{[a,b]}(\theta,\omega,E):=
  \begin{bmatrix}
	u_a(b) & v_a(b)\\
    u_a'(b) & v_a'(b)
  \end{bmatrix}.
\end{equation*}
The transfer matrix satisfies
\begin{gather}
  M_{[a,b]}=M_{[t,b]}M_{[a,t]},\quad a\le t \le b\label{eq:M-semigroup}\\
  M_{n+[a,b]}(\theta,\omega,E)=M_{[a,b]}(\theta+n\omega,\omega,E),\quad
  n\in\Z\label{eq:M-shift}.
\end{gather}

\begin{remark}
  The fact that we can only use discrete shifts in \cref{eq:M-shift},
  stems from the fact that we are working with
  potentials of the form $ V(t,\theta+t\omega) $ instead of just
  $ V(\theta+t\omega) $. The reason  we consider the more general
  potentials $ V(t,\theta+t\omega) $ is to be able to apply the result
  of Bjerkl\"ov \cite{Bje06} that guarantees that the statement of our
  main theorem is not vacuous. 
\end{remark}

Equation \cref{eq:eigenvalue-equation} can be re-written as
\begin{equation*}
  Y'(t)=A(t)Y(t),\quad
  Y(t)=\begin{bmatrix}
	y(t) \\ y'(t)
  \end{bmatrix}, \quad A(t)=\begin{bmatrix}
	0 & 1 \\
    V(t,\theta+t\omega)-E & 0
  \end{bmatrix}.
\end{equation*}
As a consequence of Gr\"onwall's inequality one has
\begin{equation*}
  \norm{Y(b)}\le \exp \left( \int_a^b \norm{A(t)}\,dt \right)\norm{Y(a)}. 
\end{equation*}
Therefore we have
\begin{equation}\label{eq:transfer-matrix-bound}
  \log\norm{M_{[a,b]}}\le (b-a)C(V,|E|).
\end{equation}
Since
\begin{equation*}
  \det M_{[a,b]}=W(u_a,v_a)=1,
\end{equation*}
where $ W $ stands for the Wronskian, it follows that we also have
\begin{equation}\label{eq:inverse-transfer-matrix-bound}
  \log\norm{M_{[a,b]}^{-1}}\le (b-a)C(V,|E|).
\end{equation}
Using \cref{eq:M-semigroup}, \cref{eq:M-shift},
\cref{eq:transfer-matrix-bound}, and
\cref{eq:inverse-transfer-matrix-bound} we see that, as in the
discrete case, we have the following ``almost invariance'' property
\begin{equation}\label{eq:almost-invariance}
  \left|\log\norm{M_{[a,b]}(\theta,\omega,E)}-\log\norm{M_{[a,b]}(\theta+\omega,\omega,E)}\right|\le C(V,|E|).
\end{equation}
This property is essential for establishing a large deviations
estimate (see \cref{thm:ldt}).

The finite scale Lyapunov exponents are defined by
\begin{equation*}
  L_{[a,b]}(\omega,E)=\frac{1}{b-a}\int_{\T^d}\log\norm{M_{[a,b]}(\theta,\omega,E)}\,d\theta.
\end{equation*}
We let $ M_t=M_{[0,t]} $ and $ L_t=L_{[0,t]} $. By \cref{eq:M-semigroup} and \cref{eq:M-shift}, the sequence
$ (L_n)_{n\ge 1} $ is subadditive and so by Fekete's subadditive lemma
we can define the Lyapunov exponent
\begin{equation}\label{eq:Lyapunov-definition}
  L(\omega,E):=\lim_{n\to\infty} L_n(\omega,E)=\inf_{n\ge 1} L_n(\omega,E).
\end{equation}
We note that by Kingman's subadditive ergodic theorem we also have
\begin{equation}\label{eq:Lyapunov-Kingman}
  L(\omega,E) \stackrel{\text{a.s.}}{=}
  \lim_{n\to \infty}\frac{1}{n}\log\norm{M_n(\theta,\omega,E)},
\end{equation}
but we won't make use of this fact.

Let
\begin{equation}\label{eq:H_rho}
  \H_\rho=\{z\in \C: |\Im z|\le \rho\}.
\end{equation}
It is known that there exists $ \rho=\rho(V) $ such that $ V $ extends
to be complex analytic in a neighborhood of $ \H_\rho^{d+1} $ and
the extension remains periodic in the real direction. In particular,
this implies that
\begin{equation*}
  L_{[a,b]}(\eta,\omega,E)=\frac{1}{b-a}\int_{\T^d} \log\norm{M_{[a,b]}(\theta+i\eta,\omega,E)}\,d\theta
\end{equation*}
is well defined for all $ E\in \C $ and $ \omega\in\C^d $,
$ \eta\in \R^d $ such that
\begin{equation*}
  \max(|a|,|b|)\norm{\Im\omega}\le \rho/2,\quad  \norm{\eta}\le \rho/2 .
\end{equation*}
As before, we can define
\begin{equation*}
  L(\eta,\omega,E)=\lim_{n\to\infty} L_n(\eta,\omega,E)=\inf_{n\ge 1} L_n(\eta,\omega,E).
\end{equation*}

\section{Properties of the Transfer
  Matrix}\label{sec:transfer-matrix-properties}

All the results of this section are analogues of results obtained in
the discrete setting by Goldstein and Schlag \cite{GolSch01},
\cite{GolSch08} (a large deviations estimate and an uniform upper
bound can also be found in \cite{BouGol00}).  Since the results we
need from \cite{GolSch01} were already obtained in a multifrequency
setting with general Diophantine condition we only discuss their
proofs in \cref{sec:appendix} (which includes proofs for \cref{thm:ldt}, \cref{prop:Lyapunov-convergence-rate}, and
\cref{lem:Lyapunov-increasing}). The results we need from
\cite{GolSch08} were obtained in a single frequency setting with a
strong Diophantine condition, so we present their proofs in this
section. In either case, the proofs are only given in the interest of
clarity, as they follow along the same lines as in the originals.

Let
\begin{equation*}
  \DC_t := \{ \omega\in \T^d : \norm{k\cdot \omega}\ge c |k|^{-A}, k\in \Z^d\setminus
  \{ 0  \}, |k|\le t  \}.
\end{equation*}
For the purposes of the semialgebraic approximation it is important to
keep track of the fact that bmost finite scale results require
only a finite Diophantine condition, as above, and only the positivity
of the finite interval Lyapunov exponent (if needed at all).

In what follows we will always assume that the intervals we work on
are finite and non-trivial. Also note that all the results are only
effective when the size of the interval is large enough. For smaller
sizes the constants can be adjusted so that the results hold
trivially.

The main ingredient for both the decay of Green's function and the
elimination of resonances is the following large deviations estimate.
\begin{theorem}\label{thm:ldt}
  Let $ I=[a,b] $ and $ \epsilon>0 $. Then for any $ \omega\in \DC_{|I|} $, $ E\in \C $,
  and $ \eta\in \R^d $, $ \norm{\eta}\le \rho(V) $, we have
  \begin{equation*}
    \mes \{ \theta\in \T^d: |\log\norm{M_I (\theta+i\eta,\omega,E)}-|I|L_I(\eta,\omega,E)|
    \ge \epsilon|I|^{1-\sigma}  \}
    \le C\exp(-c|I|^{\sigma}),
  \end{equation*}
  with $ c=c(V,d,\DC,|E|,\epsilon) $, $ C=C(V,d,\DC,|E|) $, and
  $ \sigma=\sigma(d,\DC)\in(0,1) $.
\end{theorem}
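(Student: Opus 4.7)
The plan is to reduce the continuous-time estimate to its discrete-time analogue for analytic $SL(2,\C)$-cocycles over Diophantine shifts, which was established by Goldstein--Schlag \cite{GolSch01} in exactly our multifrequency setting. The key object is the \emph{monodromy} $A(\theta) := M_{[0,1]}(\theta, \omega, E)$: by the standard analytic dependence of ODE solutions on parameters, $A$ extends complex-analytically to a neighborhood of $\H_\rho^d$ for $\rho = \rho(V)$, and iterating \cref{eq:M-semigroup} together with \cref{eq:M-shift} gives the cocycle factorization
\begin{equation*}
  M_N(\theta, \omega, E) = A(\theta + (N-1)\omega) \cdots A(\theta + \omega)\, A(\theta)
\end{equation*}
for every integer $N \ge 1$. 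Uniform bounds $\log \norm{A^{\pm 1}} \le C(V, |E|)$ on the strip follow from \cref{eq:transfer-matrix-bound} and \cref{eq:inverse-transfer-matrix-bound}. Thus at integer scales the continuous transfer matrix is itself a discrete analytic cocycle product, to which the known LDT should apply directly.

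To handle general endpoints $I = [a,b]$, set $n_0 = \lceil a \rceil$, $n_1 = \lfloor b \rfloor$ and split by semigroup,
\begin{equation*}
  M_I = M_{[n_1, b]} \, M_{[n_0, n_1]} \, M_{[a, n_0]}.
\end{equation*}
The two boundary factors and their inverses have logarithm of norm bounded by $C(V, |E|)$ uniformly in $\theta, \eta$, so they perturb both $\log \norm{M_I}$ and $|I| L_I$ by only $O(1)$; for $|I|$ large this $O(1)$ error is absorbed into $\epsilon |I|^{1-\sigma}$. Applying the measure-preserving shift $\theta \mapsto \theta + n_0 \omega$ on $\T^d$, we may assume $n_0 = 0$ and reduce to controlling $\log \norm{M_N} - N L_N$ with $N = n_1 - n_0$. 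Feeding the cocycle factorization into the discrete multifrequency LDT of \cite{GolSch01} now yields the stated bound, with $\sigma = \sigma(d, \DC)$ inherited from the discrete case; crucially, that proof at scale $N$ only uses non-resonance for $|k| \le N$, which matches the finite Diophantine hypothesis $\omega \in \DC_{|I|}$.

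I expect the only real technical point is tracking the parameter dependencies in \cite{GolSch01}: one must verify that the constants $c, C$ depend only on the uniform bounds of $A^{\pm 1}$ on $\H_\rho^d$ (hence on $V$ and $|E|$), on $d$, and on the Diophantine data, with no hidden dependence on the complex energy beyond $|E|$ or on the imaginary shift $\eta$. This is routine but tedious bookkeeping, and is the sort of verification the authors defer to \cref{sec:appendix}.
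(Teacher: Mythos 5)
Your approach is correct, and it is a genuinely different packaging of the argument than what the paper does. The appendix does \emph{not} reduce to the discrete LDT as a black box: it applies the subharmonic Cartan estimate (\cref{thm:GS-Cartan-estimate}) directly to the plurisubharmonic function $\theta \mapsto \log\norm{M_I(\theta + i\eta, \omega, E)}$, combining it with the almost-invariance property \cref{eq:almost-invariance} and the discrepancy estimate \cref{eq:discrepancy} — in effect re-running the proof of the discrete LDT verbatim with the continuous transfer matrix in place of a discrete cocycle product, with only the Cartan estimate treated as a black box. Your route instead exploits the factorization
\begin{equation*}
  M_N(\theta,\omega,E)=A(\theta+(N-1)\omega)\cdots A(\theta),\qquad A(\theta):=M_{[0,1]}(\theta,\omega,E),
\end{equation*}
and invokes the discrete multifrequency LDT itself as the black box. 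Both work, and yours is arguably the reduction the introduction alludes to when it says the LDT ``one obtains immediately from the discrete case''. The trade-off is that your argument implicitly requires the discrete LDT to hold for arbitrary analytic $SL(2,\C)$-valued cocycles with uniform bounds on $\log\norm{A^{\pm 1}}$ over a strip, not merely for the specific Schr\"odinger cocycles treated in the reference; the proof there does give this, but a line noting this is needed if one only cites the statement. Two small details worth flagging in your write-up: the boundary factors $M_{[a,n_0]}$, $M_{[n_1,b]}$ perturb not only $\log\norm{M_I}$ but also the average $|I|L_I(\eta,\omega,E)$ by $O(1)$ (this is essentially \cref{lem:Lyapunov-I-to-N}); and your cocycle $A$, when shifted to $A(\cdot+i\eta)$, must extend analytically to a strip of width bounded below uniformly in $\eta$ with $\norm{\eta}\le\rho(V)$ — this does hold because $V$ extends to a \emph{neighborhood} of $\H_\rho^{d+1}$, and $\sup\log\norm{A^{\pm 1}}\le C(V,|E|)$ is also uniform in $\eta$, which is precisely why the constants in \cref{thm:ldt} depend on $|E|$ but not on $\eta$.
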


\begin{note*}
  From now on $ \sigma $ will denote the constant from \cref{thm:ldt}. 
\end{note*}

We will need the following result to relate the Lyapunov exponent
with the finite interval Lyapunov exponents, and the finite interval
Laypunov exponents with each other.
\begin{proposition}\label{prop:Lyapunov-convergence-rate}
  Let $ I=[a,b] $, $ J=[b,c] $, $ ||I|-|J||\le \delta $. If
  $ (\eta,\omega,E)\in \R^d\times\DC_{|I|}\times\C $,
  is such that
  $ \norm{\eta}\le \rho(V) $, $ L_I(\eta,\omega,E)\ge \gamma>0 $, then we have
  \begin{equation}\label{eq:L_I-vs-L_J}
    |L_I(\eta,\omega,E)-L_{I\cup J}(\eta,\omega,E)|\le \frac{C(\log(1+|I|))^{1/\sigma}}{|I|}
  \end{equation}
  with $ C=C(V,d,\DC,|E|,\gamma,\delta) $. Furthermore, if
  $ \omega\in \DC $ then
  \begin{equation}\label{eq:L_I-vs-L}
    |L_I(\eta,\omega,E)-L(\eta,\omega,E)|\le \frac{C(\log(1+|I|))^{1/\sigma}}{|I|}
  \end{equation}
  with $ C=C(V,d,\DC,|E|,\gamma) $.
\end{proposition}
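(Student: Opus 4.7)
My plan is to adapt the Goldstein--Schlag avalanche-principle proof from \cite{GolSch01} to the continuous setting, exploiting that \cref{eq:M-shift} provides shift-invariance under integer translations. First I would reduce to integer-endpoint intervals. Using \cref{eq:M-semigroup} with the a priori bounds \cref{eq:transfer-matrix-bound} and \cref{eq:inverse-transfer-matrix-bound}, rounding the endpoints changes each transfer matrix by a factor $e^{O(1)}$, hence $L_I$ by $O(1/|I|)$, which is absorbed in the right-hand side. A shift by \cref{eq:M-shift} then allows me to assume $I = [0, n]$, $J = [n, n+m]$ with $n, m \in \Z_{>0}$ and $|n - m| \le \delta + 2$, and the same bound reduces $L_{I \cup J}$ to $L_{[0, 2n]}$ modulo $O(\delta/n)$. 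It therefore suffices to compare $L_{[0, n]}$ with $L_{[0, 2n]}$, and (for \cref{eq:L_I-vs-L}) with $L$.

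The core step applies the avalanche principle to the factorization
\begin{equation*}
M_{[0, Nn]}(\theta) = M_{[0, n]}\bigl(\theta + (N-1)n\omega\bigr) \cdots M_{[0, n]}(\theta + n\omega)\, M_{[0, n]}(\theta),
\end{equation*}
which follows from \cref{eq:M-semigroup} and \cref{eq:M-shift}. By invoking \cref{thm:ldt} at the two scales $n$ and $2n$ simultaneously, off a $\theta$-set of total measure at most $C N \exp(-c n^\sigma)$ each single block satisfies $\log \norm{M_{[0, n]}(\theta + k n \omega)} = n L_{[0, n]} + O(\epsilon n^{1-\sigma})$ and each consecutive pair $M_{[0, 2n]}(\theta + k n \omega)$ satisfies $\log \norm{M_{[0, 2n]}(\theta + k n \omega)} = 2n L_{[0, 2n]} + O(\epsilon n^{1-\sigma})$. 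These verify the avalanche-principle hypotheses with $\mu \sim \exp(n L_{[0, n]})$ and $\kappa \sim \exp(2n(L_{[0, 2n]} - L_{[0, n]}))$, and only scales $\le Nn$ are used, so $\omega \in \DC_{|I|}$ suffices. Integrating the avalanche identity, controlling the bad-set contribution by \cref{eq:transfer-matrix-bound}, and dividing by $Nn$ gives, with a suitable choice of $\epsilon$ depending on $n$,
\begin{equation*}
L_{[0, Nn]} = \frac{2(N-1)}{N}\, L_{[0, 2n]} - \frac{N-2}{N}\, L_{[0, n]} + O\!\left(\frac{(\log(1+n))^{1/\sigma}}{n}\right).
\end{equation*}
Passing $N \to \infty$ (under the full Diophantine condition $\omega \in \DC$ for \cref{eq:L_I-vs-L}) identifies the left-hand side with $L$, giving the main identity $L_{[0, n]} - L = 2(L_{[0, n]} - L_{[0, 2n]}) + O((\log n)^{1/\sigma}/n)$, which together with a dyadic iteration $n \mapsto 2^k n$ yields both \cref{eq:L_I-vs-L_J} and \cref{eq:L_I-vs-L}.

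The principal obstacle is the circularity in verifying the avalanche hypothesis: the quantitative bound $\kappa \mu \gtrsim 1$ amounts to $L_{[0, 2n]} > L_{[0, n]}/2$, which is strictly stronger than the standing assumption $L_{[0, n]} \ge \gamma$. I would handle this by a bootstrap over dyadic scales: starting from a base scale $n_0$ large enough for the error $O((\log n_0)^{1/\sigma}/n_0)$ to be much smaller than $\gamma/2$, the main identity at scale $n$ itself yields $L_{[0, 2n]} \ge L_{[0, n]} - O((\log n)^{1/\sigma}/n) \ge \gamma/2$, propagating the avalanche hypothesis to scale $2n$. The exponent $1/\sigma$ in the final rate reflects the optimal choice of $\epsilon$ in \cref{thm:ldt} as a small power of $\log|I|$, balancing the LDT slack $\epsilon |I|^{1-\sigma}$ against the bad-set decay $\exp(-c(\epsilon)|I|^\sigma)$.
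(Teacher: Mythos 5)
There is a genuine gap: your avalanche-principle factorization uses blocks of size $n = |I|$, and this can never produce the claimed rate $(\log(1+|I|))^{1/\sigma}/|I|$. With size-$n$ blocks, each application of \cref{thm:ldt} carries a slack of $\epsilon n^{1-\sigma}$, and after summing over $N$ blocks and dividing by $Nn$ the residual error is $\epsilon n^{-\sigma}$, which for $\sigma < 1$ is far larger than $(\log n)^{1/\sigma}/n$. You propose to rescue this by shrinking $\epsilon$ with $n$, but the proof of \cref{thm:ldt} (see the choice $C|I|r^\beta = \epsilon |I|^{1-\sigma}$ together with the constraint \cref{eq:r-bounds}) forces $\epsilon \gtrsim |I|^{\sigma - \beta/(2dA+\beta)}$, a negative but \emph{fixed} power of $|I|$, whereas your scheme needs $\epsilon \lesssim (\log n)^{1/\sigma} n^{\sigma - 1}$; for large $n$ the latter is far below the former, so the required $\epsilon$ is inadmissible. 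The correct argument (that of \cite[Lem.~4.2]{GolSch01}, which the paper invokes) decomposes $M_n$ and $M_{2n}$ into blocks of the \emph{logarithmic} size $\ell \sim (\log n)^{1/\sigma}$; the two telescoping avalanche identities then differ only in a boundary term of order $\ell/n$, which is exactly the target rate, and the bad-set contribution $n\exp(-c\ell^\sigma)$ is $O(1/n)$ precisely because $\ell \gtrsim (\log n)^{1/\sigma}$.

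A second, independent problem is your claim that ``$\omega \in \DC_{|I|}$ suffices'' because ``only scales $\le Nn$ are used'' — this has the inclusion backwards. Larger scales demand \emph{more} of the Diophantine condition: $\DC_{2n} \subsetneq \DC_n$, so invoking \cref{thm:ldt} on $[0,2n]$ is not justified when all you are given is $\omega \in \DC_{|I|} \approx \DC_n$. This is precisely why the block size must be taken $\ell \ll |I|$: the LDT is then only needed at scale $\ell$, and $\DC_\ell \supset \DC_{|I|}$, so the hypothesis covers it. Finally, note that the paper still needs to guarantee positivity of the Lyapunov exponent at the small scale $\ell$ (not at scales $n, 2n$ as in your bootstrap), and it obtains $L_\ell \gtrsim \gamma$ from $L_I \ge \gamma$ via \cref{lem:Lyapunov-increasing}; your write-up does not supply this step, and your bootstrap over dyadic scales $n \mapsto 2^k n$ does not address it because it concerns the wrong scale.
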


We will use the next estimate to see that positivity of the Lyapunov
exponent for some interval also implies positivity for smaller
intervals (we do this because we won't be able to apply
\cref{eq:L_I-vs-L} when $ \omega\in\DC_{|I|} $).  It is possible to
adjust the estimate to also give meaningful information when $ |I| $
is close to $ |J| $, but we are only interested in the case
$ |I|\gg |J| $.

\begin{lemma}\label{lem:Lyapunov-increasing}
  Let $ I=[a,b] $, $ J=[c,d] $, $ |I|>|J| $. Then for any
  $ (\eta,\omega,E)\in \R^d\times \T^d\times \C $,
  $ \norm{\eta}\le \rho(V) $ we have
  \begin{equation*}
    L_J(\eta,\omega,E)\ge L_I(\eta,\omega,E)-C(V,|E|)
    \left( \frac{|J|+1}{|I|-|J|} +\frac{1}{|J|} \right).
  \end{equation*}
\end{lemma}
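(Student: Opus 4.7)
The plan is to decompose $I=[a,b]$ as a disjoint union of integer-translates of $J$ together with short filler pieces, then combine the semigroup property \cref{eq:M-semigroup}, the shift property \cref{eq:M-shift}, and the uniform bounds \cref{eq:transfer-matrix-bound}, \cref{eq:inverse-transfer-matrix-bound}.

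Set $m:=\lceil|J|\rceil$ and pick $n_0\in\Z$ so that $c_0:=c+n_0\in[a,a+1)$. Define $J_k:=[c_0+km,\,c_0+km+|J|]$ for $k=0,1,\ldots,K-1$, where $K$ is the largest integer with $J_{K-1}\subseteq I$. Since $m\ge|J|$ the $J_k$'s are pairwise disjoint, and each is the integer translate $(n_0+km)+J$; thus \cref{eq:M-shift} gives $M_{J_k}(\theta,\omega,E)=M_J(\theta+(n_0+km)\omega,\omega,E)$. The complementary pieces of $I$ are a head $[a,c_0]$ of length $<1$, inter-piece gaps $G_k:=[c_0+km+|J|,\,c_0+(k+1)m]$ of length $m-|J|<1$ (for $k=0,\ldots,K-2$), and a tail of length $<m\le|J|+1$. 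Only integer shifts of $J$ produce a translation on the $\theta$-side (cf.\ the remark after \cref{eq:M-shift}), so these short fillers are forced, and taking $m=\lceil|J|\rceil$ is what keeps each gap below length $1$; this is the origin of the $1/|J|$ term in the final error.

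By \cref{eq:M-semigroup} and submultiplicativity, $\log\norm{M_I(\theta+i\eta,\omega,E)}$ is bounded by the sum of the corresponding log-norms over the partition pieces. The head, gaps, and tail together contribute at most $C(V,|E|)(K+|J|+1)$ uniformly in $\theta$ by \cref{eq:transfer-matrix-bound}. Integrating in $\theta\in\T^d$ and using translation invariance of the Lebesgue measure on $\T^d$ to identify $\int_{\T^d}\log\norm{M_{J_k}(\theta+i\eta,\omega,E)}\,d\theta=|J|L_J(\eta,\omega,E)$ for every $k$, we obtain
\begin{equation*}
  |I|\,L_I(\eta,\omega,E)\le K|J|\,L_J(\eta,\omega,E)+C(V,|E|)(K+|J|+1).
\end{equation*}
Using the partition identity $|I|-K|J|=(c_0-a)+(K-1)(m-|J|)+(\text{tail length})\in[0,K+|J|+1]$ together with $|L_I|\le C(V,|E|)$ to absorb the factor $|I|/(K|J|)$ after rearranging, followed by the count $K\ge(|I|-|J|-1)/m\ge(|I|-|J|-1)/(|J|+1)$, shows that $L_J\ge L_I-C(V,|E|)(1/|J|+(|J|+1)/(|I|-|J|))$ after absorbing fixed numerical constants. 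The degenerate case $|I|-|J|\le 1$, where the count breaks down and possibly $K=0$, is handled trivially since the asserted error then exceeds $2C(V,|E|)\ge|L_I|+|L_J|$.
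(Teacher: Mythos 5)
Your argument is correct. It rests on exactly the same idea as the paper's proof --- decompose the longer interval into integer translates of the shorter one plus small remainder pieces, then appeal to submultiplicativity/subadditivity --- but the execution is more direct. The paper first reduces both $L_I$ and $L_J$ to the integer-length quantities $L_m$, $L_n$ (with $m=[|I|]+2$, $n=[|J|]+1$) via \cref{lem:Lyapunov-I-to-N}, and then invokes the already-noted subadditivity of the sequence $(nL_n)$: from $m=kn+r$ one gets $mL_m\le kn L_n + rL_r$, which rearranges to $L_n\ge L_m - C(|J|+1)/(|I|-|J|)$, and the $1/|J|$ term comes from the reduction step. You instead build the subadditive inequality directly at the continuous scale: tile $I$ by $K$ translates $J_k=(n_0+km)+J$ with $m=\lceil|J|\rceil$, control the head, tail, and sub-unit gaps uniformly by \cref{eq:transfer-matrix-bound}, integrate in $\theta$ using \cref{eq:M-shift} and translation invariance, and then use $L_I\ge 0$ to discard the surplus term $(|I|-K|J|)L_I/(K|J|)$. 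The paper's route is shorter on the page because it reuses \cref{lem:Lyapunov-I-to-N}; yours is self-contained but pays for it with the bookkeeping on $K$ and the explicit treatment of the degenerate case $|I|-|J|\le 1$ (which you correctly dispose of by noting the right-hand side already dominates $|L_I|+|L_J|$). Both routes land on the same estimate, up to the constant $C(V,|E|)$.
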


Now we start the build-up toward the proof of the uniform upper bound
from \cref{prop:uniform-upper-bound}. This result is crucial for the
decay of the Green's function and the application of Cartan's
estimate. While for the decay of Green's function the simpler estimate
from \cref{cor:uniform-upper-bound} is enough, for Cartan's estimate
we also need the estimate to have good stability under (complex)
perturbations in $ (\theta,\omega,E) $. See \cite[Sec. 4]{GolSch08}
for the discrete counterparts of the results that follow.

One of the ingredients for the proof of
\cref{prop:uniform-upper-bound} will be the fact that the Lyapunov
exponent is Lipschitz with respect to $ \eta $.  This follows
immediately from the multivariable generalization of the following
fact from \cite{GolSch08}.
\begin{lemma}[{\cite[Lem. 4.1]{GolSch08}}]\label{lem:GS-Lipschitz-average}
  Let $ 1>\rho>0 $ and suppose $ u $ is subharmonic on
  \begin{equation*}
    A_\rho:=\{ z : 1-\rho<|z|<1+\rho   \}
  \end{equation*}
  such that $ \sup_{z\in A_\rho} u(z)\le 1 $ and
  $ \int_\T u(e(x))\,dx\ge 0 $ (we used the notation
  $ e(x)=e^{2\pi i x} $).  Then for any $ r_1,r_2 $ so that
  $ 1-\rho/2<r_1,r_2<1+\rho/2 $ one has
  \begin{equation*}
    \left| \int_\T u(r_1e(x))\,dx-\int_\T u(r_2 e(x))\,dx \right|\le C_\rho |r_1-r_2|. 
  \end{equation*}
\end{lemma}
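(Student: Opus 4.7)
The plan is to use the Riesz representation theorem on a slightly smaller annulus, reducing the estimate to a uniform Lipschitz bound for the circular means of $\log|z - w|$, and to handle the harmonic remainder explicitly via its Laurent expansion.

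First I would fix an intermediate radius, say $\rho' = 3\rho/4$, and apply the Riesz representation to $u$ on $A_{\rho'}$: there is a positive Borel measure $\mu$ on $A_{\rho'}$ and a harmonic function $h$ on $A_{\rho'}$ such that
\begin{equation*}
  u(z) = h(z) + \int_{A_{\rho'}} \log|z-w|\,d\mu(w), \qquad z\in A_{\rho'}.
\end{equation*}
The key quantitative step is to bound $\mu(K)$ on the compact set $K=\{1-\rho/2\le |z|\le 1+\rho/2\}$ in terms of $\rho$ alone. This is done by covering $K$ by finitely many disks $D(z_j,r_j)\subset A_{\rho'}$ of comparable radius and invoking the standard consequence of the sub-mean-value inequality: for a subharmonic function bounded above by $M$ on $D(z_j,2r_j)$ whose mean on some slightly larger circle is bounded below by $m$, the Riesz mass on $D(z_j,r_j)$ is controlled by a constant multiple of $M-m$. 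Our hypotheses give $u\le 1$, and the assumption $\int_\T u(e(x))\,dx\ge 0$ together with the sub-mean-value inequality propagates a lower bound on the means of $u$ over sufficiently many circles in $A_{\rho'}$; this yields $\mu(K)\le C_\rho$.

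Next I would estimate circular means separately for the two pieces. For the harmonic part $h$, its Laurent series on $A_{\rho'}$ implies that $r\mapsto \int_\T h(re(x))\,dx$ has the form $a+b\log r$ for $r\in(1-\rho/2,1+\rho/2)$, which is Lipschitz in $r$ on that interval with a constant depending only on $\rho$ and on the sup-norm of $h$ on $K$; the latter is controlled in terms of $\rho$ because $h = u - \int\log|\cdot - w|\,d\mu$ and both pieces on the right are bounded on $K$ once $\mu(K)$ is. For the logarithmic potential, I would use the explicit identity
\begin{equation*}
  \int_\T \log|re(x)-w|\,dx = \log\max(r,|w|),
\end{equation*}
and check case by case (both $r_1,r_2\ge |w|$, both $<|w|$, or straddling $|w|$) that $|\log\max(r_1,|w|)-\log\max(r_2,|w|)|\le C_\rho|r_1-r_2|$ uniformly for $w\in A_{\rho'}$ and $r_1,r_2\in(1-\rho/2,1+\rho/2)$. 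Integrating this uniform Lipschitz bound against $\mu$ and using $\mu(K)\le C_\rho$ plus the fact that the contribution from $w\notin K$ is itself harmonic in $z$ near $|z|=1$ (and can be folded into $h$) gives the required Lipschitz bound on the circular means.

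The main obstacle is the uniform bound on the Riesz mass $\mu(K)$: one needs the hypothesis $\int_\T u(e(x))\,dx\ge 0$ to be leveraged not just on the unit circle but on a family of circles inside $A_{\rho'}$, so as to apply the standard $M-m$ estimate on each covering disk. This requires a short argument spreading the mean lower bound from $|z|=1$ to nearby circles via subharmonicity (using $u\le 1$ to control oscillation), after which the rest of the proof is bookkeeping.
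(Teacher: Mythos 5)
The paper states this lemma as a citation to Goldstein--Schlag and does not include a proof of its own, so your attempt can only be judged on its internal correctness, not on agreement with the paper.

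There is a genuine gap in the Riesz-mass step. The claim that ``the assumption $\int_\T u(e(x))\,dx\ge 0$ together with the sub-mean-value inequality propagates a lower bound on the means of $u$ over sufficiently many circles in $A_{\rho'}$'' is not justified as written: subharmonicity gives \emph{upper} bounds ($u(z_0)$ is at most its circular means), not lower bounds on circular means away from $|z|=1$. What the hypotheses do give is that $u\ge -1$ on a set of measure $\ge 1/2$ on the unit circle (from $\int_\T u(e(x))\,dx\ge 0$ and $u\le 1$ by Chebyshev), and the Riesz--Jensen inequality then bounds the mass on disks of radius $\sim\rho$ centered at those good points. But those good points could, in the worst case, be concentrated on a sub-arc, so the resulting disks need not cover all of $K=\{\,1-\rho/2\le|z|\le 1+\rho/2\,\}$. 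Consequently $\mu(K)\lesssim_\rho 1$ is not established, and with it the bound on the $\log r$-coefficient of the harmonic remainder collapses. This is the one missing idea; the remaining bookkeeping (the identity $\int_\T\log|re(x)-w|\,dx=\log\max(r,|w|)$, the case check, the Laurent form $a+b\log r$) is correct.

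It is also worth noting that the statement admits a much shorter proof that bypasses the Riesz representation entirely. Set $\psi(t)=\int_\T u(e^{t}e(x))\,dx$ for $\log(1-\rho)<t<\log(1+\rho)$. Since $u$ is subharmonic, $\psi$ is convex in $t=\log r$; the hypotheses give $\psi\le 1$ everywhere and $\psi(0)\ge 0$. Convexity with one lower bound at an interior point and a global upper bound forces $\psi$ to be bounded below on $[\log(1-3\rho/4),\log(1+3\rho/4)]$, and then the slopes of $\psi$ on the smaller interval $[\log(1-\rho/2),\log(1+\rho/2)]$ are controlled by difference quotients out to the larger interval, giving $|\psi'|\le C_\rho$ there. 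Since $|\log r_1-\log r_2|\simeq|r_1-r_2|$ near $r=1$, this is exactly the claimed Lipschitz bound. If you do want to salvage the Riesz route, the cleanest fix is to observe that the Riesz mass of $\{r_1<|z|<r_2\}$ equals $\psi'(\log r_2)-\psi'(\log r_1)$, so the same convexity bound on $\psi'$ supplies the missing $\mu(K)\lesssim_\rho 1$ — but at that point the potential-theoretic machinery is redundant.
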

The previous Lemma admits the following multivariable extension.
\begin{lemma}\label{lem:Lipschitz-average}
  Let $ 1>\rho>0 $ and suppose $ u $ is subharmonic in each variable
  on
  \begin{equation*}
    A_\rho^d:=\{ z\in \C^d : 1-\rho<|z_i|<1+\rho, i=1,\ldots,d   \}
  \end{equation*}
  such that
  \begin{equation*}
    0\le u(z)\le 1, \quad z\in A_\rho^d.
  \end{equation*}
  Then for any $ r,\tilde r\in \R^d $ so that
  $ 1-\rho/2<r_i,\tilde r_i<1+\rho/2 $, $ i=1,\ldots,d $, one has
  \begin{equation*}
    \left| \int_{\T^d} u(r_1 e(x_1),\ldots,r_d e(x_d))\,dx-\int_{\T^d} u(\tilde r_1 e(x_1),\ldots,\tilde r_d e(x_d))\,dx \right|
    \le C_\rho \sum_i|r_i-\tilde r_i|. 
  \end{equation*}
\end{lemma}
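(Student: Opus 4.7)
The natural approach is a one-coordinate-at-a-time telescoping reduction to the single-variable Lemma 4.1. Write the difference of the two integrals as a telescoping sum
\begin{equation*}
  \sum_{i=1}^{d}\bigl[I_i - I_{i-1}\bigr],
\end{equation*}
where $I_k$ denotes the integral over $\T^d$ of $u$ evaluated at the point whose first $k$ coordinates are $\tilde r_j e(x_j)$ (for $j\le k$) and whose remaining coordinates are $r_j e(x_j)$ (for $j>k$). Thus $I_d$ and $I_0$ are the two integrals appearing on the left-hand side of the claim, and in passing from $I_{i-1}$ to $I_i$ only the $i$-th coordinate is changed from $r_i e(x_i)$ to $\tilde r_i e(x_i)$.

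For a fixed $i$ and fixed values of $x_j$, $j\ne i$, consider the one-variable function
\begin{equation*}
  v_i(z) := u\bigl(\tilde r_1 e(x_1),\ldots,\tilde r_{i-1} e(x_{i-1}),\,z,\,r_{i+1}e(x_{i+1}),\ldots,r_d e(x_d)\bigr).
\end{equation*}
By hypothesis, $u$ is subharmonic in each variable on $A_\rho^d$, so $v_i$ is subharmonic on $A_\rho$. Moreover $0\le v_i\le 1$, which in particular gives $\sup_{z\in A_\rho} v_i(z)\le 1$ and $\int_\T v_i(e(y))\,dy\ge 0$, so $v_i$ satisfies the hypotheses of \cref{lem:GS-Lipschitz-average}. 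Applying that lemma with $r_1 = r_i$ and $r_2 = \tilde r_i$ (both lying in $(1-\rho/2,1+\rho/2)$) yields
\begin{equation*}
  \left| \int_\T v_i(r_i e(x_i))\,dx_i - \int_\T v_i(\tilde r_i e(x_i))\,dx_i \right|\le C_\rho |r_i - \tilde r_i|.
\end{equation*}

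Now apply Fubini to the $i$-th telescoping difference: use the remaining $d-1$ variables as the outer integration and the $x_i$ variable as the inner integration. The inner integral is precisely the left-hand side of the displayed bound above (with the other $x_j$ fixed), so its absolute value is at most $C_\rho |r_i - \tilde r_i|$ pointwise in the other coordinates. Integrating this pointwise bound over $\T^{d-1}$ (a probability space) preserves the bound, and we obtain $|I_i - I_{i-1}|\le C_\rho |r_i - \tilde r_i|$. Summing over $i$ and applying the triangle inequality yields the claimed estimate with the same constant $C_\rho$ from \cref{lem:GS-Lipschitz-average}.

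There is no serious obstacle here: the only things to verify are the hypotheses of \cref{lem:GS-Lipschitz-average} for the one-variable slices, which follow immediately from the separate subharmonicity of $u$ and from $0\le u\le 1$ (the pointwise positivity giving the required nonnegativity of the circle average, which is slightly stronger than the hypothesis of \cref{lem:GS-Lipschitz-average}).
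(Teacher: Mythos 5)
Your proof is correct and follows essentially the same route as the paper's: both reduce to the one-variable Lemma~\ref{lem:GS-Lipschitz-average} by changing one radius at a time, using $0\le u\le 1$ to verify its hypotheses. The paper organizes this as an induction on $d$ and applies the one-variable lemma to the partially averaged function $v(z_{d+1})=\int_{\T^d}u(\ldots,z_{d+1})\,dx$, whereas you apply it directly to one-variable slices of $u$ and then integrate via Fubini; the two orderings are interchangeable, and yours has the small advantage of not needing to note that averaging preserves subharmonicity.
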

\begin{proof}
  The proof is by induction on $ d $. The case $ d=1 $ holds by
  \cref{lem:GS-Lipschitz-average}. We assume the result holds for
  $ d $ and prove it for $ d+1 $. Let
  \begin{equation*}
    v(z_{d+1})=\int_{\T^d} u(r_1e(x_1),\ldots,r_d e(x_d),z_{d+1})\,dx,
    \tilde v(z_{d+1})=\int_{\T^d} u(\tilde r_1e(x_1),\ldots,\tilde r_de(x_d),z_{d+1})\,dx.
  \end{equation*}
  By the induction assumption
  \begin{equation*}
    |v(z_{d+1})-\tilde v(z_{d+1})|\le C_\rho \sum_{i=1}^d |r_i-\tilde r_i|,\quad z_{d+1}\in A_\rho.
  \end{equation*}
  At the same time we have that $ v $ is subharmonic on $ A_\rho $ and
  $ 0\le v \le 1 $, so we can apply the case $ d=1 $ to it to get the
  desired conclusion. Indeed, we have
  \begin{multline*}
    \left|\int_\T v(r_{d+1}e(x_{d+1}))\,d x_{d+1}-\int_\T \tilde v(\tilde r_{d+1}e(x_{d+1}))\,dx_{d+1}\right|\\
    \le \left|\int_\T v(r_{d+1}e(x_{d+1}))\,dx_{d+1}-\int_\T v(\tilde r_{d+1}e(x_{d+1}))\,dx_{d+1}\right|\\
    +\left|\int_\T \left( v(\tilde r_{d+1}e(x_{d+1}))-\tilde v (\tilde r_{d+1}e(x_{d+1})) \right)\,dx_{d+1}\right|\\
    \le C_\rho |r_{d+1}-\tilde r_{d+1}|+ C_\rho\sum_{i=1}^d
    |r_i-\tilde r_i|.
  \end{multline*}
\end{proof}
As an immediate consequence of
\cref{lem:Lipschitz-average} we have the following result.
\begin{proposition}\label{prop:Lipschitz-Lyapunov}
  Let $ I=[a,b] $. If $ (\eta,\omega,E)\in \R^d\times \C^d\times \C $ are such that
  \begin{equation*}
    \max(|a|,|b|)\norm{\Im\omega}\le \rho(V),\quad \norm{\eta}\le \rho(V)
  \end{equation*}
  then
  \begin{equation*}
    |L_I(\eta,\omega,E)-L_I(\omega,E)|\le C \norm{\eta}, C=C(V,d,|E|).
  \end{equation*}
\end{proposition}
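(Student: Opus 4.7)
The plan is to reduce directly to \cref{lem:Lipschitz-average} by encoding the vertical shift $\theta\mapsto \theta+i\eta$ as a change of polyradii. The key observation is that $e(\theta_j+i\eta_j)=e^{-2\pi\eta_j}e(\theta_j)$, so setting $r_j=e^{-2\pi\eta_j}$ and $\tilde r_j=1$ lets us view $L_I(\eta,\omega,E)$ and $L_I(\omega,E)$ as averages of the \emph{same} complex function over circles of two different radii in each coordinate. Since $\|\eta\|\le \rho(V)$ is assumed small, the $r_j$ stay in the strip $1-\rho/2<r_j<1+\rho/2$ needed by the lemma, after possibly shrinking $\rho(V)$.

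To feed this into \cref{lem:Lipschitz-average} I need a function $u$ on a polyannulus $A_\rho^d$ that is subharmonic in each variable and lies in $[0,1]$. First I would define $\tilde u(z):=\log\norm{M_I(\theta(z),\omega,E)}$, where $\theta_j(z_j)=(2\pi i)^{-1}\log z_j$. The hypothesis $\max(|a|,|b|)\norm{\Im\omega}\le \rho(V)$ ensures that $V(t,\theta+t\omega)$, and hence via the ODE the entries of $M_I$, extend holomorphically in $\theta$ on $\norm{\Im\theta}\le \rho(V)$; thus $\tilde u$ is well-defined on a suitable $A_\rho^d$ and subharmonic in each variable, being the supremum over unit vectors $v,w$ of $\log|\langle M_I v,w\rangle|$. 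The upper bound $\tilde u\le |I|C(V,|E|)$ is just \cref{eq:transfer-matrix-bound} read in the complex regime. For the lower bound I would use $\det M_I=W(u_a,v_a)=1$, which persists under complexification by analytic continuation; this forces the singular values of $M_I$ to multiply to $1$, so $\norm{M_I}\ge 1$ and $\tilde u\ge 0$. Setting $u:=\tilde u/(|I|C(V,|E|))$ produces the required normalization, and one checks the identity
$$\int_{\T^d} u(r_1 e(\theta_1),\ldots,r_d e(\theta_d))\,d\theta=\frac{L_I(\eta,\omega,E)}{C(V,|E|)}.$$

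Applying \cref{lem:Lipschitz-average} to $u$ with the two sets of radii then gives
$$|L_I(\eta,\omega,E)-L_I(\omega,E)|\le C(V,|E|)\, C_\rho\sum_{j=1}^d |e^{-2\pi\eta_j}-1|\le C(V,d,|E|)\norm{\eta},$$
the last step using $\norm{\eta}\le \rho(V)$ to bound $|e^{-2\pi\eta_j}-1|$ linearly in $|\eta_j|$. The only non-routine ingredient is the lower bound $\tilde u\ge 0$, which reduces to the classical fact that $SL(2,\C)$ matrices have operator norm $\ge 1$; I do not anticipate any substantive obstacle beyond this bookkeeping, as the whole argument is an immediate corollary of \cref{lem:Lipschitz-average} once the correct normalization and parametrization are in place.
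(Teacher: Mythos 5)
Your proof is correct and is exactly the reduction the paper has in mind: the paper states Proposition~\ref{prop:Lipschitz-Lyapunov} as "an immediate consequence of Lemma~\ref{lem:Lipschitz-average}," and your argument supplies the bookkeeping that makes it immediate (the change of variables $z_j=r_je(\theta_j)$ with $r_j=e^{-2\pi\eta_j}$, the normalization of $\log\norm{M_I}$ to lie in $[0,1]$, with the lower bound from $\det M_I=1$ and the upper bound from \cref{eq:transfer-matrix-bound}). No gaps.
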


The other ingredient needed for \cref{prop:uniform-upper-bound} is the
stability of the logarithm of the norm of the transfer matrix. We have
the following ``rough'' estimate, that will be refined through the use
of the Avalanche Principle.

\begin{lemma}\label{lem:stability-rough}
  Let $ I=[a,b] $. Let $ (\theta_i,\omega_i,E_i)\in \C^d\times \C^d\times \C $,
  $ i=1,2 $, such that
  \begin{equation*}
    |E_2|\le|E_1|,\quad \norm{\Im \theta_i}\le \rho(V),\quad \max(|a|,|b|)\norm{\Im\omega_i}\le \rho(V).
  \end{equation*}
  Then we have
  \begin{multline*}
    \left| \log\norm{M_I(\theta_1,\omega_1,E_1)}-\log \norm{M_I(\theta_2,\omega_2,E_2)} \right|\\
    \le e^{C(V,|E_1|) |I|}
    \frac{\norm{\theta_1-\theta_2}+\max(|a|,|b|)\norm{\omega_1-\omega_2}+|E_1-E_2|}{\max_i\norm{M_I(\theta_i,\omega_i,E_i)}},
  \end{multline*}
  provided the right-hand side is $ \ll 1 $.
\end{lemma}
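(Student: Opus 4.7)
The plan is a standard ODE-perturbation argument based on Duhamel's formula. For $i=1,2$, set $M_i(t):=M_{[a,t]}(\theta_i,\omega_i,E_i)$, which solves the matrix-valued IVP
\begin{equation*}
M_i'(t)=A_i(t)M_i(t),\quad M_i(a)=\Id,\quad A_i(t)=\begin{bmatrix} 0 & 1 \\ V(t,\theta_i+t\omega_i)-E_i & 0 \end{bmatrix}.
\end{equation*}
Subtracting these two ODEs and applying variation of constants gives the integral representation
\begin{equation*}
M_1(b)-M_2(b)=\int_a^b M_{[s,b]}(\theta_1,\omega_1,E_1)\bigl(A_1(s)-A_2(s)\bigr)M_2(s)\,ds.
\end{equation*}

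Next, each factor in the integrand is estimated. The two outer transfer-matrix factors have norm at most $e^{C(V,|E_1|)|I|}$ via the Gr\"onwall-type bound \cref{eq:transfer-matrix-bound} together with the hypothesis $|E_2|\le|E_1|$. The middle factor is controlled through the analyticity of $V$: the assumptions $\|\Im\theta_i\|\le\rho(V)$ and $\max(|a|,|b|)\|\Im\omega_i\|\le\rho(V)$ ensure that $\theta_i+s\omega_i$ remains in the strip of analyticity for all $s\in I$, so Cauchy estimates yield
\begin{equation*}
\|A_1(s)-A_2(s)\|\le C(V)\bigl(\|\theta_1-\theta_2\|+\max(|a|,|b|)\|\omega_1-\omega_2\|\bigr)+|E_1-E_2|.
\end{equation*}
Combining these bounds and evaluating the integral (which contributes a further factor of $|I|$, absorbed into $e^{C|I|}$) yields the norm estimate
\begin{equation*}
\|M_1(b)-M_2(b)\|\le e^{C(V,|E_1|)|I|}\bigl(\|\theta_1-\theta_2\|+\max(|a|,|b|)\|\omega_1-\omega_2\|+|E_1-E_2|\bigr).
\end{equation*}

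To pass from a norm estimate to the logarithmic one, set $\kappa:=\|M_1-M_2\|/\max_i\|M_i\|$ and use $\bigl|\|M_1\|-\|M_2\|\bigr|\le\|M_1-M_2\|$. When $\kappa\ll1$, we have $\min_i\|M_i\|\ge(1-\kappa)\max_i\|M_i\|\ge \tfrac{1}{2}\max_i\|M_i\|$, so the ratio $\|M_1\|/\|M_2\|$ lies in $[1-2\kappa,1+2\kappa]$, and the elementary bound $|\log(1+x)|\le 2|x|$ for $|x|\le 1/2$ gives $\bigl|\log\|M_1\|-\log\|M_2\|\bigr|\le 4\kappa$, which is exactly the inequality claimed in the lemma. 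The main obstacle is simply bookkeeping: ensuring that the Cauchy/Lipschitz estimates for $V$ apply uniformly over the admissible complex parameter range (so the constant depends only on $V$ and $|E_1|$), and tracking the factor $\max(|a|,|b|)$ that naturally arises because the argument $\theta_i+s\omega_i$ of $V$ depends on the position $s$ within $I$.
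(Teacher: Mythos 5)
Your proof is correct and follows essentially the same route as the paper: a variation-of-constants (Duhamel) formula to bound $\norm{M_1-M_2}$ by $e^{C(V,|E_1|)|I|}$ times the parameter separation, followed by the elementary observation that $|\log x|\lesssim|x-1|$ when $|x-1|\ll1$. Your matrix-valued Duhamel identity is a slightly cleaner (and in fact more accurately written) version of the paper's, which states it for vector solutions $Y_i$ with matched initial data and contains a typo ($M_{[a,s]}$ where $M_{[s,b]}$ is meant), but the content and the estimates are the same.
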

\begin{proof}
  Let $ Y_i(t) $, $ i=1,2 $, be solutions of the equations
  \begin{equation*}
    Y_i'(t)=A_i(t)Y_i(t),\quad
    A_i(t)= \begin{bmatrix}
      0 & 1 \\
      V(t,\theta_i+t\omega_i)-E_i & 0
    \end{bmatrix}.
  \end{equation*}
  Using the variations of constants method we have
  \begin{equation*}
    Y_2(b)=M_{[a,b]} Y_2(a)+\int_a^b M_{[a,s]}(A_2(s)-A_1(s))Y_2(s)\,ds
  \end{equation*}
  where $ M $ denotes the transfer matrix corresponding to the
  equation with $ i=1 $. So, if $ Y_1(a)=Y_2(a) $, then
  \begin{multline*}
    \norm{Y_2(b)-Y_1(b)}\le \int_a^b \norm{M_{[a,s]}}\norm{A_2(s)-A_1(s)}\norm{Y_2(s)}\,ds\\
    \le \norm{Y_2(a)} \int_a^b e^{C(V,|E_1|)(s-a)}(C(V)\norm{\theta_1-\theta_2}+C(V)|s|\norm{\omega_1-\omega_2}+|E_1-E_2|)\,ds\\
    \le \norm{Y_2(a)} e^{C
      |I|}(\norm{\theta_1-\theta_2}+\max(|a|,|b|)\norm{\omega_1-\omega_2}+|E_1-E_2|).
  \end{multline*}
  This implies 
  \begin{multline*}
    \norm{M_I(\theta_1,\omega_1,E_1)-M_I(\theta_2,\omega_2,E_2)}\\
    \le e^{C(V,|E_1|)
      |I|}(\norm{\theta_1-\theta_2}+\max(|a|,|b|)\norm{\omega_1-\omega_2}+|E_1-E_2|).
  \end{multline*}
  The conclusion follows from this and the fact that
  $ |\log x|\lesssim |x-1| $, provided $ |x-1|\ll 1 $.
\end{proof}

To refine the previous estimate, let us recall the Avalanche
Principle.
\begin{proposition}[{\cite[Prop. 2.2]{GolSch01}}]\label{prop:AP}
  Let $ A_1,\ldots,A_n $ be a sequence of $ 2\times 2
  $-matrices. Suppose that
  \begin{equation*}
    \min_{1\le j\le n} \norm{A_j}\ge \mu >n \text{ and}
  \end{equation*}
  \begin{equation*}
    \max_{1\le j<n} \left( \log\norm{A_{j+1}}+\log\norm{A_j}-\log \norm{A_{j+1}A_j} \right)<\frac{1}{2}\log \mu.
  \end{equation*}
  Then
  \begin{equation*}
    \left| \log\norm{A_n\ldots A_1}+\sum_{j=2}^{n-1}\log\norm{A_j}-\sum_{j=1}^{n-1}\log\norm{A_{j+1}A_j} \right|<C\frac{n}{\mu}.
  \end{equation*}
\end{proposition}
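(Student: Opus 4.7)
The plan is to analyze the product through singular value decompositions of each factor and to propagate the dominant singular direction inductively through the product. Write $A_j = \|A_j\|\, u_j v_j^\ast + s_j\, \tilde u_j \tilde v_j^\ast$, with $u_j,v_j,\tilde u_j,\tilde v_j$ unit vectors, $u_j\perp\tilde u_j$, $v_j\perp\tilde v_j$, and $s_j = |\det A_j|/\|A_j\|$; for the unimodular transfer matrices to which this Principle is applied one has $s_j \le \mu^{-1}$. A direct two-dimensional computation yields
\begin{equation*}
\frac{\|A_{j+1}A_j\|}{\|A_{j+1}\|\,\|A_j\|} = |\langle u_j,v_{j+1}\rangle|\bigl(1+O(\mu^{-1})\bigr),
\end{equation*}
so the hypothesized gap bound is equivalent, up to $O(\mu^{-1})$, to the alignment bound $|\langle u_j,v_{j+1}\rangle| \ge \mu^{-1/2}$.

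Set $B_k := A_k A_{k-1}\cdots A_1$. I would then show by induction on $k$ that $\|B_k\|\ge \tfrac12 \prod_{j=1}^k \|A_j\| \prod_{j=1}^{k-1}|\langle u_j,v_{j+1}\rangle|$, and that $B_k$ admits an SVD whose top left singular vector $U_k$ satisfies $\|U_k-u_k\|\le \delta_k = O(k/\mu^2)$. The induction step decomposes $B_k$ via its SVD, applies $A_{k+1}$, and uses that the bottom singular value of $B_k$ is exponentially small in $k$ (hence much smaller than $\mu^{-1}$): the alignment hypothesis prevents the dominant direction from being lost, while the tiny contribution of the bottom direction keeps $\delta_{k+1}-\delta_k$ additive of size $O(\mu^{-2})$. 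The hypothesis $\mu>n$ then keeps $\delta_k \le n/\mu^2 < 1/\mu$ throughout.

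With the alignment under control, one obtains at each stage
\begin{equation*}
\log\|A_{k+1}B_k\| - \log\|B_k\| - \log\|A_{k+1}\| = \log|\langle u_k,v_{k+1}\rangle| + O(\mu^{-1}),
\end{equation*}
i.e.\ the full-product defect matches the two-matrix defect up to $O(\mu^{-1})$. Telescoping
\begin{equation*}
\log\|B_n\| - \log\|A_1\| = \sum_{k=1}^{n-1}\bigl(\log\|A_{k+1}B_k\| - \log\|B_k\|\bigr),
\end{equation*}
and substituting back the two-matrix identity $\log|\langle u_k,v_{k+1}\rangle| = \log\|A_{k+1}A_k\| - \log\|A_{k+1}\| - \log\|A_k\| + O(\mu^{-1})$, reproduces the claimed formula with accumulated error $O(n/\mu)$.

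The main obstacle is the inductive alignment step. The naive recursion for the top singular vector of $B_{k+1}$ in terms of that of $B_k$ involves an amplification factor $|\langle u_k,v_{k+1}\rangle|^{-1}\le \mu^{1/2}$, which if iterated $n$ times would cause uncontrolled blow-up. The saving observation is that the second singular value of $B_k$ shrinks geometrically (because $B_k$ is unimodular while $\|B_k\|$ already grows like $\mu^{k/2}$ from previous steps), so that the singular-vector perturbation introduced by $A_{k+1}$ is in fact \emph{additive} of size $O(\mu^{-2})$, not multiplicative in $\delta_k$. Once this alignment stability is established, the telescoping bookkeeping described above is routine.
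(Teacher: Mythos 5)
The paper does not prove this statement; it cites \cite[Prop.~2.2]{GolSch01}, so I am comparing against the standard Goldstein--Schlag argument. Your overall scheme (SVD of each factor, inductive alignment of the top singular direction of the partial product with that of the current factor, then telescoping) is the right one and essentially matches Goldstein--Schlag. However, the quantitative claim in your key inductive step is off, in a way that makes the induction as you stated it fail to close. You claim $\|U_{k+1}-u_{k+1}\|\le\delta_k+O(\mu^{-2})$, hence $\delta_k=O(k/\mu^2)$. Writing $A_{k+1}=\|A_{k+1}\|\,u_{k+1}v_{k+1}^*+s_{k+1}\tilde u_{k+1}\tilde v_{k+1}^*$ and $B_k=\|B_k\|\,U_kV_k^*+t_k\tilde U_k\tilde V_k^*$, the ratio of the component of $B_{k+1}$ along $\tilde u_{k+1}$ to the dominant one along $u_{k+1}$ is $\dfrac{s_{k+1}}{\|A_{k+1}\|\,|\langle v_{k+1},U_k\rangle|}$; with $s_{k+1}\le 1/\|A_{k+1}\|\le\mu^{-1}$ and $|\langle v_{k+1},U_k\rangle|\gtrsim\mu^{-1/2}$ this gives $\|U_{k+1}-u_{k+1}\|\lesssim\mu^{-3/2}$, not $\delta_k+O(\mu^{-2})$. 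In particular, for $k<\mu^{1/2}$ the claimed accumulating bound $Ck/\mu^2$ sits \emph{below} the genuine per-step size $\Theta(\mu^{-3/2})$, so the inductive step cannot produce it: the invariant $\delta_k\le Ck/\mu^2$ is simply false as stated, and the induction does not close.

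The correct resolution is stronger and simpler than what you propose: the bound $\|U_{k+1}-u_{k+1}\|\lesssim\mu^{-3/2}$ is \emph{absolute}, depending on $\delta_k$ only through the requirement $\delta_k\ll\mu^{-1/2}$ (so that $|\langle v_{k+1},U_k\rangle|\gtrsim\mu^{-1/2}$ survives), because the "$\mu^{1/2}$ amplification" you worry about, namely $|\langle v_{k+1},U_k\rangle|^{-1}$, multiplies $s_{k+1}/\|A_{k+1}\|\le\mu^{-2}$ rather than $\delta_k$. So one propagates the \emph{non-accumulating} invariant $\delta_k\lesssim\mu^{-3/2}$ (with $\delta_1=0$), which closes immediately for $\mu$ large; the contribution from the second singular direction of $B_k$ is $\lesssim t_k/(\|B_k\|\mu^{-1/2})\lesssim\mu^{-k-1/2}\le\mu^{-3/2}$ and is harmless, as you observed. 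With this corrected alignment control your telescoping bookkeeping is fine: each step yields $\log\|B_{k+1}\|-\log\|B_k\|-\log\|A_{k+1}\|=\log|\langle v_{k+1},u_k\rangle|+O(\mu^{-1})$ (the $O(\mu^{-1})$ coming from $\delta_k/\mu^{-1/2}$), substituting the two-matrix identity produces the stated formula, and $n-1$ steps give $O(n/\mu)$. Finally, the statement requires a determinant hypothesis such as $|\det A_j|\le1$ (present in the original \cite{GolSch01} and automatic for the unimodular transfer matrices used here); you invoke it via $s_j\le\mu^{-1}$, but it should be listed among the hypotheses.
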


\begin{proposition}\label{prop:stability-AP}
  Let $ I=[a,b] $. Let $ (\eta_0,\omega_0,E_0)\in\R^d\times \DC_{|I|}\times \C $,
  such that $ \norm{\eta_0}\le \rho(V) $,
  $ L_I(\eta_0,\omega_0,E_0)\ge \gamma>0 $. Let
  \begin{equation}\label{eq:ell-lb}
    \ell \ge C(V,d,\DC,|E_0|,\gamma)(\log(1+|I|))^{1/\sigma}.
  \end{equation}
  There exists a set
  \begin{equation*}
    \cB=\cB_{I,\omega_0,E_0,\eta_0,\ell}, \mes(\cB)\le C\exp(-c\ell^{\sigma}),
    C=C(V,d,\DC,|E_0|,\gamma),c=c(V,d,\DC,|E_0|)
  \end{equation*}
  such that for any $ \theta_0\in\T^d\setminus \cB $ and any $ (\theta,\omega,E)\in \C^d\times\C^d\times \C $
  satisfying
  \begin{equation*}
    \norm{\theta-\theta_0-i\eta_0}+\max(|a|,|b|)\norm{\omega-\omega_0}+|E-E_0|\le \exp(-\ell^2),
  \end{equation*}
  we have
  \begin{equation*}
    |\log\norm{M_I(\theta_0+i\eta_0,\omega_0,E_0)}-\log\norm{M_I(\theta,\omega,E)}|\le \exp(-\gamma\ell/4).
  \end{equation*}
\end{proposition}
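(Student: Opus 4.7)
The plan is to apply the Avalanche Principle (\cref{prop:AP}) after partitioning $I$ into $n\approx|I|/\ell$ sub-blocks of length $\sim\ell$, and then to invoke the rough stability bound \cref{lem:stability-rough} on each block separately. The key numerology is that on a block of length $\ell$ the prefactor $e^{C\ell}$ in the crude estimate is easily absorbed by the perturbation size $e^{-\ell^2}$, while the Avalanche Principle assembles these block-scale stabilities into a full-interval stability without significant further loss.

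Setting things up, I would partition $I=I_1\cup\cdots\cup I_n$ into blocks of equal length $\ell'\in[\ell,2\ell)$ (with $n=\lfloor|I|/\ell\rfloor$), and set $A_j^{(0)}:=M_{I_j}(\theta_0+i\eta_0,\omega_0,E_0)$ and $B_j^{(0)}:=A_{j+1}^{(0)}A_j^{(0)}=M_{I_j\cup I_{j+1}}(\theta_0+i\eta_0,\omega_0,E_0)$. Since $\omega_0\in\DC_{|I|}\subset\DC_{2\ell}$, \cref{thm:ldt} applied to each $A_j^{(0)}$ and $B_j^{(0)}$ (with a fixed small $\epsilon=\epsilon(\gamma)>0$) yields a bad $\theta_0$-set $\cB$ of measure $\leq 2nC\exp(-c\ell^\sigma)\leq C\exp(-c\ell^\sigma/2)$, absorbing the factor $n\leq|I|/\ell$ via (\ref{eq:ell-lb}). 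Outside $\cB$, the logarithmic norms $\log\|A_j^{(0)}\|$ and $\log\|B_j^{(0)}\|$ are within $\epsilon\ell^{1-\sigma}$ of their means $|I_j|L_{I_j}$ and $|I_j\cup I_{j+1}|L_{I_j\cup I_{j+1}}$.

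Next I would verify the two hypotheses of the Avalanche Principle with $\mu:=e^{\gamma\ell/3}$. The lower bound $L_{I_j}(\eta_0,\omega_0,E_0)\geq\gamma/2$ follows from \cref{lem:Lyapunov-increasing} applied with $J=I_j$ and the big interval $I$ (the edge case $\ell\gtrsim|I|$ is handled trivially by \cref{lem:stability-rough} applied directly on $I$, since then $e^{-\ell^2}$ dominates $e^{C|I|}$); combined with the LDT bound this gives $\log\|A_j^{(0)}\|\geq\gamma\ell/3=\log\mu>\log n$. For the ``angle'' hypothesis, $\log\|A_{j+1}^{(0)}\|+\log\|A_j^{(0)}\|-\log\|B_j^{(0)}\|$ equals $|I_j|L_{I_j}+|I_{j+1}|L_{I_{j+1}}-|I_j\cup I_{j+1}|L_{I_j\cup I_{j+1}}+O(\epsilon\ell^{1-\sigma})$, and a direct application of (\ref{eq:L_I-vs-L_J}) with $J=I_{j+1}$ gives $|L_{I_j}-L_{I_j\cup I_{j+1}}|=O((\log\ell)^{1/\sigma}/\ell)$, so the angle is $O((\log\ell)^{1/\sigma})+O(\epsilon\ell^{1-\sigma})\ll\tfrac12\log\mu$.

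Finally, for any admissible perturbation $(\theta,\omega,E)$, with $A_j:=M_{I_j}(\theta,\omega,E)$, \cref{lem:stability-rough} applied on each block of length $\leq 2\ell$ gives
\begin{equation*}
|\log\|A_j^{(0)}\|-\log\|A_j\||\;\leq\;e^{C\ell}e^{-\ell^2}/\|A_j^{(0)}\|\;\leq\;e^{-\ell^2/2},
\end{equation*}
and the analogous estimate for the pairs, so the AP hypotheses are automatically inherited by the perturbed sequence. Subtracting the two Avalanche Principle expansions produces
\begin{equation*}
|\log\|M_I(\theta_0+i\eta_0,\omega_0,E_0)\|-\log\|M_I(\theta,\omega,E)\||\;\leq\;2ne^{-\ell^2/2}+Cn/\mu\;\leq\;e^{-\gamma\ell/4},
\end{equation*}
using $n\leq\exp(c\ell^\sigma/2)$ and $\mu=e^{\gamma\ell/3}$. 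The main obstacle is not any single step but the simultaneous bookkeeping: four small quantities --- LDT fluctuations, Lyapunov-comparison gaps from \cref{prop:Lyapunov-convergence-rate}, the AP remainder $n/\mu$, and the rough-stability leakage $e^{C\ell-\ell^2}/\mu$ --- must all be dominated by $e^{-\gamma\ell/4}$ for any single scale $\ell$ satisfying (\ref{eq:ell-lb}).
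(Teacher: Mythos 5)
Your proposal is correct and follows essentially the same route as the paper's own proof: partition into $\sim\ell$-length blocks, apply \cref{thm:ldt}, \cref{eq:L_I-vs-L_J} and \cref{lem:Lyapunov-increasing} to set up the Avalanche Principle for the unperturbed chain, inherit the AP hypotheses for the perturbed chain via \cref{lem:stability-rough}, and subtract the two expansions. The only tiny discrepancy is cosmetic: the paper's ``trivial'' regime where the rough bound alone suffices is taken to be $\ell\ge|I|^{1/2+}$ (which is what the inequality $e^{C|I|-\ell^2}\ll e^{-\gamma\ell/4}$ actually requires), not $\ell\gtrsim|I|$; this matters because \cref{lem:Lyapunov-increasing} only keeps $L_{I_j}\gtrsim\gamma$ when $\ell$ is a small fraction of $|I|$, so the AP branch must be cut off well before $\ell\sim|I|$.
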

\begin{proof}
  Note that if $ \ell \ge |I|^{1/2+} $ then the estimate holds for all
  $ \theta_0 $ (provided $ |I| $ is large enough) by
  \cref{lem:stability-rough}. So it is enough to consider the case
  $ \ell < |I|^{1/2+} $.
  
  Partition $ I $ into $ n $ intervals $ J_i $ (ordered from left to
  right) of equal length and such that $ |J_i|\simeq \ell $.  Using
  the large deviations estimate and \cref{eq:L_I-vs-L_J} we can apply
  the Avalanche Principle with
  $ A_i=M_{J_i}(\theta_0+i\eta_0,\omega_0,E_0) $,
  $ \mu=\exp(\gamma \ell/2) $, provided $ \theta_0 $ is outside of the
  set $ \cB $ where the large deviations estimate fails on the
  intervals $ J_i $, $ J_i\cup J_{i+1} $. Note that we can apply
  \cref{eq:L_I-vs-L_J} because from \cref{lem:Lyapunov-increasing} it
  follows that $ L_{J_i}\ge 3\gamma/4 $ (here we used
  $ \ell < |I|^{1/2+} $). We clearly have
  \begin{equation*}
    \mes(\cB)\lesssim nC\exp(-c\ell^{\sigma})\le C\exp(-c\ell^{\sigma}/2).
  \end{equation*}
  We use the lower bound \cref{eq:ell-lb} on $ \ell $ for the above measure estimate
  and to ensure that $ \mu>n $.

  From the assumptions on $ \theta,\omega,E $ and
  \cref{lem:stability-rough} it follows that we can also apply the
  Avalanche Principle with $ \tilde A_i=M_{J_i}(\theta,\omega,E) $ and
  the same $ \mu $.  Subtracting the two Avalanche Principle
  expansions and applying \cref{lem:stability-rough} again we get
  \begin{multline*}
	|\log \norm{M_I(\theta_0+i\eta_0,\omega_0,E_0)}-\log \norm{M_I(\theta,\omega,E)}|\\
    \le \left|\sum \log\norm{A_i}-\log\norm{\tilde A_i} \right|
    +\left|\sum \log\norm{A_{i+1}A_i}-\log\norm{\tilde A_{i+1}\tilde
        A_i} \right|+
    C \frac{n}{\mu}\\
    \lesssim n\exp(C\ell)\exp(-\ell^2)+n\exp(-\gamma\ell/2)\le
    \exp(-\gamma\ell/4).
  \end{multline*}
\end{proof}

\begin{corollary}\label{cor:Lyapunov-stability}
  Let $ I=[a,b] $. Let $ (\eta_0,\omega_0,E_0)\in\R^d\times \DC_{|I|}\times \C $,
  such that $ \norm{\eta_0}\le \rho(V) $,
  $ L_I(\eta_0,\omega_0,E_0)\ge \gamma>0 $. Let
  \begin{equation*}
    \ell \ge C(V,d,\DC,|E_0|,\gamma)(\log(1+|I|))^{1/\sigma}.
  \end{equation*}
  Then we have
  \begin{equation*}
    |L_I(\eta_0,\omega_0,E_0)-L_I(\eta_0,\omega,E)|\le C\exp(-c\ell^{\sigma})
  \end{equation*}
  with $ C(V,d,\DC,|E_0|,\gamma) $, $ c=c(V,d,\DC,|E_0|) $, for any
  $ (\omega,E)\in \C^d\times \C $ such that
  \begin{equation*}
    \max(|a|,|b|)\norm{\omega-\omega_0}+|E-E_0|\le \exp(-\ell^2).
  \end{equation*}
\end{corollary}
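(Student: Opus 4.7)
The plan is to obtain the bound on $|L_I(\eta_0,\omega_0,E_0)-L_I(\eta_0,\omega,E)|$ by integrating the pointwise stability estimate from \cref{prop:stability-AP} over $\theta_0\in\T^d$, and then controlling the contribution from the exceptional set $\cB$ via the trivial deterministic upper bound on $\log\norm{M_I}$.

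More precisely, I would start from the definition
\begin{equation*}
  L_I(\eta_0,\omega,E)-L_I(\eta_0,\omega_0,E_0)=\frac{1}{|I|}\int_{\T^d}\bigl(\log\norm{M_I(\theta_0+i\eta_0,\omega,E)}-\log\norm{M_I(\theta_0+i\eta_0,\omega_0,E_0)}\bigr)\,d\theta_0,
\end{equation*}
and split the integral according to whether $\theta_0\in\cB$ or $\theta_0\in \T^d\setminus\cB$, where $\cB=\cB_{I,\omega_0,E_0,\eta_0,\ell}$ is the exceptional set produced by \cref{prop:stability-AP}. The hypotheses of \cref{prop:stability-AP} are satisfied with $\theta=\theta_0+i\eta_0$ because $\norm{\theta-\theta_0-i\eta_0}=0$ and the assumption $\max(|a|,|b|)\norm{\omega-\omega_0}+|E-E_0|\le \exp(-\ell^2)$ is exactly what is required on $(\omega,E)$.

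For $\theta_0\in\T^d\setminus\cB$, \cref{prop:stability-AP} gives the pointwise bound $\exp(-\gamma\ell/4)$, contributing at most $\exp(-\gamma\ell/4)/|I|$ to the integral after dividing by $|I|$. For $\theta_0\in\cB$, I would use the trivial deterministic estimate \cref{eq:transfer-matrix-bound} applied to both parameter choices $(\omega_0,E_0)$ and $(\omega,E)$, which gives $|\log\norm{M_I(\theta_0+i\eta_0,\omega_*,E_*)}|\le C(V,|E_0|)|I|$ (since $|E|\le |E_0|+1$ by the smallness of $|E-E_0|$). Therefore the contribution from $\cB$ is bounded by $(1/|I|)\cdot 2C(V,|E_0|)|I|\cdot \mes(\cB)\le C\exp(-c\ell^{\sigma})$, with constants as claimed. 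Combining the two contributions and absorbing $\exp(-\gamma\ell/4)$ into $C\exp(-c\ell^\sigma)$ (after possibly shrinking $c$) yields the corollary.

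The only real matter is bookkeeping: one must check that the lower bound $\ell\ge C(\log(1+|I|))^{1/\sigma}$ is large enough both for \cref{prop:stability-AP} to apply and for the $\exp(-\gamma\ell/4)$ pointwise term to be dominated by the measure estimate $C\exp(-c\ell^\sigma)$. There is no substantive obstacle here — the corollary is essentially immediate from the pointwise stability proposition once one notices that the exceptional set has measure much smaller than the trivial $L^\infty$ bound on $(\log\norm{M_I})/|I|$ allows us to tolerate.
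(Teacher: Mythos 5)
Your proposal is correct and is exactly the paper's argument, just written out in full: the paper's proof is the single sentence ``The conclusion follows by integrating the estimate from \cref{prop:stability-AP},'' and your decomposition of the integral over $\cB$ versus $\T^d\setminus\cB$, together with the trivial bound \cref{eq:transfer-matrix-bound} on the exceptional set, is precisely the bookkeeping that sentence is suppressing.
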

\begin{proof}
  The conclusion follows by integrating the estimate from
  \cref{prop:stability-AP}.
\end{proof}

We are now ready to prove the uniform upper bound.
\begin{proposition}\label{prop:uniform-upper-bound}
  Let $ I=[a,b] $. Let $ (\omega_0,E_0)\in \DC_{|I|}\times \C $ be
  such that $ L_I(\omega_0,E_0)\ge \gamma>0 $. Then
  \begin{equation*}
    \sup_{\theta\in\T^d} \log\norm{M_I(\theta+\eta,\omega,E)}\le |I|L_I(\omega_0,E_0)+C|I|^{1-\sigma},
    C=C(V,d,\DC,|E_0|,\gamma)
  \end{equation*}
  for any $ (\eta,\omega,E)\in\C^d\times \C^d\times \C $ such that
  $ \norm{\eta}\le \rho(V)/(1+|I|) $ and
  \begin{equation}\label{eq:omega-E-restrictions}
    \max(|a|,|b|)\norm{\omega-\omega_0}+|E-E_0|\le \exp(-C(\log(1+|I|))^{2/\sigma}), C=C(V,d,\DC,|E_0|,\gamma) .
  \end{equation}
\end{proposition}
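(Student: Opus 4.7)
The plan is to upgrade the almost-everywhere bound from \cref{thm:ldt} to a pointwise uniform bound by invoking the sub-mean-value property of $\zeta\mapsto\log\norm{M_I(\zeta,\omega,E)}$ in the complex extension of $\theta$. The naive approach---choosing, for each $\theta\in\T^d$, a good $\theta_0$ nearby and transferring via \cref{prop:stability-AP}---does not succeed, since the stability radius $\exp(-\ell^2)$ is far smaller than the spacing implied by the bad-set measure $\exp(-c\ell^{\sigma})$, so stability neighborhoods cannot cover $\T^d$. Subharmonic averaging is what bridges this gap.

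Fix $\theta\in\T^d$ and set $r=\rho(V)/(4(1+|I|))$. The function $\zeta\mapsto\log\norm{M_I(\theta+\eta+\zeta,\omega,E)}$ is plurisubharmonic on the polydisc $D=\{\zeta\in\C^d:|\zeta_j|\le r\}$, since $\theta+\eta+\zeta$ stays inside the analyticity strip for $V$. The sub-mean-value inequality gives
\begin{equation*}
  \log\norm{M_I(\theta+\eta,\omega,E)}\le\frac{1}{(\pi r^2)^d}\int_D\log\norm{M_I(\theta+\eta+\zeta,\omega,E)}\,dm(\zeta).
\end{equation*}
Writing $\zeta=u+iv$, I set $\theta''=\theta+\Re\eta+u$ and $y=\Im\eta+v$, so $\norm{y}=O(1/|I|)$. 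I take $\ell=C_0(\log(1+|I|))^{1/\sigma}$ with $C_0$ large, and apply \cref{thm:ldt} and \cref{prop:stability-AP} at $(\omega_0,E_0,y)$; the precondition $L_I(y,\omega_0,E_0)\ge\gamma/2$ follows from \cref{prop:Lipschitz-Lyapunov}, and the union of the two bad sets has measure $\le(1+|I|)^{-(d+\sigma+1)}$ for $C_0$ large enough.

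On the good set, combining \cref{thm:ldt}, the transfer from $(\omega_0,E_0)$ to $(\omega,E)$ provided by \cref{prop:stability-AP} (whose closeness hypothesis is precisely \cref{eq:omega-E-restrictions} with $\exp(-\ell^2)=\exp(-C_0^2(\log(1+|I|))^{2/\sigma})$), and the Lipschitz estimate of \cref{prop:Lipschitz-Lyapunov} (which replaces $|I|L_I(y,\omega_0,E_0)$ by $|I|L_I(\omega_0,E_0)$ at cost $C|I|\norm{y}=O(1)$), yields $\log\norm{M_I(\theta''+iy,\omega,E)}\le|I|L_I(\omega_0,E_0)+C|I|^{1-\sigma}$. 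Off the good set, I use the crude bound $\log\norm{M_I}\le C(V,|E|)|I|$ (the complex analogue of \cref{eq:transfer-matrix-bound}). After integrating over $v$ and dividing by $(\pi r^2)^d$, the good part contributes $\le|I|L_I(\omega_0,E_0)+C|I|^{1-\sigma}$ and the bad part $\le C|I|^{d+1-cC_0^{\sigma}}$, absorbed into $C|I|^{1-\sigma}$ for $C_0$ large. The main obstacle is choosing the three scales $(r,\ell,C_0)$ consistently: $r\sim 1/|I|$ must be small enough that the Lipschitz correction $C|I|\norm{y}$ is negligible compared to $|I|^{1-\sigma}$, yet large enough that the bad contribution survives division by $(\pi r^2)^d\sim|I|^{-2d}$; and $\ell^2$ must equal the exponent in \cref{eq:omega-E-restrictions}, which is precisely why the power there is $2/\sigma$ rather than $1/\sigma$.
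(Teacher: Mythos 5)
Your proposal is correct and follows essentially the same route as the paper: both upgrade the almost-everywhere bound from \cref{thm:ldt} to a pointwise bound by averaging the plurisubharmonic function $\log\norm{M_I}$ over a small complex neighborhood of $\theta+\eta$, using \cref{prop:stability-AP} with $\ell\simeq(\log(1+|I|))^{1/\sigma}$ to transfer from $(\omega_0,E_0)$ to nearby $(\omega,E)$ (which is what forces the $2/\sigma$ in \cref{eq:omega-E-restrictions}), \cref{lem:Lipschitz-average} (equivalently \cref{prop:Lipschitz-Lyapunov}) to swap $L_I(y,\omega_0,E_0)$ for $L_I(\omega_0,E_0)$, and the crude bound \cref{eq:transfer-matrix-bound} on the exceptional set. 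The only differences are cosmetic: you average over a polydisc of radius $\sim 1/|I|$ whereas the paper uses a ball of radius $1/|I|^2$, and you track the $d$-dependence in the exponent of the bad-set measure more explicitly; both choices make the bad-set contribution negligible once the constant in $\ell$ is taken large enough.
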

\begin{proof}
  Let $ \cB^{(1)}_{\eta}=\cB^{(1)}_{\eta,E_0,\omega_0} $ be the set
  from \cref{prop:stability-AP} with
  $ \ell = C(\log(1+|I|))^{1/\sigma} $ and
  $ C=C(V,d,\DC,|E_0|,\gamma) $ large enough so that
  $ \mes(\cB^{(1)}_\eta)\le 1/|I|^4 $. Let
  $ \cB^{(2)}_{\eta}=\cB^{(2)}_{\eta,E_0,\omega_0} $ be the
  exceptional set from \cref{thm:ldt}. We define
  \begin{equation*}
    \cB=\left\{\theta\in \C^d : \norm{\Im \theta}\le \rho(V),
      \Re\theta\in \cB^{(1)}_{\Im \theta}\cup\cB^{(2)}_{\Im \theta}\right\}.
  \end{equation*}
  We clearly have $ \mes(\cB)\le 2/|I|^4 $ and if
  $ \theta\in \C^d\setminus \cB $, $ \norm{\Im \theta}\le \rho(V)/(1+|I|) $,
  then by \cref{thm:ldt} and \cref{lem:Lipschitz-average}
  \begin{equation*}
    \log\norm{M_I(\theta,\omega_0,E_0)}\le |I|L_I(\Im\theta,\omega_0,E_0)+|I|^{1-\sigma}
    \le |I|L_I(\omega_0,E_0)+2|I|^{1-\sigma},
  \end{equation*}
  and by \cref{prop:stability-AP}
  \begin{equation}\label{eq:M_I-ub}
    \log\norm{M_I(\theta,\omega,E)}\le |I|L_I(\omega_0,E_0)+3|I|^{1-\sigma}
  \end{equation}
  for any $ \omega,E $ satisfying \cref{eq:omega-E-restrictions}.

  Let $ \theta_0\in \T^d $ arbitrary and $ \eta,\omega,E $ satisfying
  the needed assumptions. Let $ B_r $ be the ball centered at
  $ \theta_0+\eta $ and of radius $ r=1/|I|^2 $. Using the submean property of plurisubharmonic
  functions, \cref{eq:transfer-matrix-bound}, and \cref{eq:M_I-ub},  we have 
  \begin{multline*}
	\log\norm{M_I(\theta_0+\eta,\omega,E)}
    \le \frac{1}{\mes(B_r)} \int_{B_r} \log\norm{M_I(\theta,\omega,E)}\,d\theta\\
    \le \frac{1}{\mes(B_r)} \left( C\mes(\cB)+(\mes(B_r)-\mes(\cB))(|I|L_I(\omega_0,E_0)+3|I|^{1-\sigma})\right)\\
    \le |I|L_I(\omega_0,E_0)+C|I|^{1-\sigma}.
  \end{multline*}
  This concludes the proof.
\end{proof}

\begin{corollary}\label{cor:uniform-upper-bound}
  Let $ I=[a,b] $. If $ (\omega,E)\in \DC\times \C $ are such that
  $ L(\omega,E)\ge \gamma>0 $ then
  \begin{align*}
    \sup_{\theta\in\T^d} \log\norm{M_I(\theta,\omega,E)} &\le |I|L(\omega,E)+C|I|^{1-\sigma}\\
    \sup_{\theta\in\T^d} \log\norm{M_I(\theta,\omega,E)^{-1}} &\le |I|L(\omega,E)+C|I|^{1-\sigma}
  \end{align*}
  with $ C=C(V,d,\DC,|E|,\gamma) $.
\end{corollary}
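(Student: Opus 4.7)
The plan is to apply Proposition~\ref{prop:uniform-upper-bound} with $\omega_0 = \omega$, $E_0 = E$, and $\eta = 0$, and then pass from $L_I$ to $L$ using Proposition~\ref{prop:Lyapunov-convergence-rate}. Concretely, Proposition~\ref{prop:uniform-upper-bound} gives $\sup_\theta \log\norm{M_I(\theta,\omega,E)} \leq |I| L_I(\omega,E) + C|I|^{1-\sigma}$, and then \ref{eq:L_I-vs-L} yields $|I| L_I \leq |I| L + C(\log(1+|I|))^{1/\sigma}$, the error being absorbed into $C|I|^{1-\sigma}$ once $|I|$ is sufficiently large.

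The main obstacle is that both Proposition~\ref{prop:uniform-upper-bound} and \ref{eq:L_I-vs-L} require a positive lower bound on the \emph{finite scale} Lyapunov exponent $L_I(\omega,E)$, whereas we are only given positivity of the limit $L(\omega,E)$. To bootstrap, I would first use the integer shift invariance $L_{I+n} = L_I$ (a consequence of \ref{eq:M-shift} and the invariance of Haar measure on $\T^d$) to reduce to the case where the left endpoint of $I$ lies in $[0,1)$. Then I would apply Lemma~\ref{lem:Lyapunov-increasing} with $J = I$ and $I' = [0,N]$ for a large integer $N$ with $N - |I| \geq |I|^2$, obtaining $L_I \geq L_{N} - C(V,|E|)/|I|$. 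Since $L_N \geq L(\omega,E) \geq \gamma$ by the infimum definition \ref{eq:Lyapunov-definition}, this gives $L_I \geq \gamma/2$ for all $|I|$ larger than an explicit constant depending on $V, |E|, \gamma$. For smaller $|I|$ the asserted bound is trivial after adjusting $C$, via \ref{eq:transfer-matrix-bound}.

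The bound on $\norm{M_I^{-1}}$ requires no additional work. The eigenvalue equation \ref{eq:eigenvalue-equation} contains no first-order term, so the Wronskian is constant in $t$ and hence $\det M_I(\theta,\omega,E) = W(u_a,v_a) = 1$ for every $E \in \C$. For any $2\times 2$ matrix $M$ with $|\det M| = 1$, the two eigenvalues of $M^* M$ multiply to $1$ and are therefore reciprocals of one another, which forces $\norm{M^{-1}} = \norm{M}$. Applied pointwise to $M_I(\theta,\omega,E)$, this shows that the bound for $\norm{M_I^{-1}}$ is identical to the one just established for $\norm{M_I}$.
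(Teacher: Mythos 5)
Your proof is correct and follows the same route as the paper: Proposition~\ref{prop:uniform-upper-bound} combined with \cref{eq:L_I-vs-L} for the first bound, and $\det M_I=1$ for the second. You carefully spell out the bootstrap $L_I\ge\gamma/2$ (via \cref{lem:Lyapunov-increasing} and the infimum characterization \cref{eq:Lyapunov-definition}) that the paper leaves implicit, and for the inverse you observe the exact equality $\norm{M_I^{-1}}=\norm{M_I}$ from the reciprocal singular values, whereas the paper settles for $\hsnorm{M_I^{-1}}=\hsnorm{M_I}\le\sqrt{2}\norm{M_I}$; both are fine since the $\sqrt 2$ is absorbed into $C|I|^{1-\sigma}$.
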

\begin{proof}
  The first estimate follows from \cref{prop:uniform-upper-bound} and
  \cref{eq:L_I-vs-L}.

  Since $ \det M_I=1 $ we have
  \begin{equation*}
    \hsnorm{M_I^{-1}}=\hsnorm{M_I}\le \sqrt{2} \norm{M_I}
  \end{equation*}
  and the second estimate follows ($ \hsnorm{\cdot} $ denotes the
  Hilbert-Schmidt norm).
\end{proof}

\section{Decay of Green's Function}
\label{sec:decay}

We consider Green's function on a finite interval $ I=[a,b] $ with Dirichlet boundary conditions:
\begin{equation}\label{eq:Green-function}
  G_I(s,t)=G_I(s,t;\theta,\omega,E)=
  \begin{cases}
	\dfrac{v_a(s)v_b(t)}{W(v_a,v_b)} &, s\le t\\[1em]
    \dfrac{v_a(t)v_b(s)}{W(v_a,v_b)} &, t\le s
  \end{cases}.
\end{equation}
Recall that the functions $ v $ satisfy the initial conditions
\cref{eq:initial-conditions}.  

We will show that if a large deviations estimate holds on some
interval, then we get exponential decay for Green's function on
another interval of roughly the same size (this is similar to what
happens in the discrete case, see \cite[Prop. 7.19]{Bou05}). In fact,
due to Poisson's formula we will need this result for the partials of
Green's function.  Recall that for any solution $ y $ of
\cref{eq:eigenvalue-equation}, on an interval containing $ I $, the
Poisson formula reads
\begin{equation*}
  y(t)=y(b)\partial_s G_I(b,t)-y(a)\partial_s G_I(a,t).
\end{equation*}

\begin{proposition}\label{prop:Green-decay}
  Let $ I=[a,b] $. Let $ (\omega,E)\in \DC\times\C $ be such that
  $ L(\omega,E)\ge \gamma>0 $. If
  \begin{equation*}
    \log\norm{M_I(\theta,\omega,E)}\ge |I|L(\omega,E)-K,
  \end{equation*}
  with
  \begin{equation*}
    C(|I|^{1-\sigma}+1)\le K \le |I|^{1-}, C=C(V,d,\DC,|E|,\gamma)
  \end{equation*}
  (recall that $ \sigma $ is as in \cref{thm:ldt}), then there exists an interval $ J=J(\theta,\omega,E) $ such that
  \begin{gather*}
    J\subset I,\ |I|-|J|\le 4K/\gamma\\
    |G_J(s,t)|,|\partial_s G_J(s,t)|\le \exp(-|s-t|L(\omega,E)+2K),\
    s,t\in J.
  \end{gather*}
\end{proposition}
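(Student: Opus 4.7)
The plan is to exhibit a subinterval $J = [a', b'] \subset I$ with $a' = a + k_1$, $b' = b - k_2$, $k_1, k_2 \in [0, 2K/\gamma]$, such that $\log\norm{M_J}$ is essentially maximal and the $(1,2)$-entry of $M_J$ is comparable to $\norm{M_J}$. Since $v_{a'}(s) = (M_{[a',s]})_{12}$, $v_{b'}(t) = -(M_{[t,b']})_{12}$, and hence $W(v_{a'}, v_{b'}) = (M_J)_{12}$, the explicit formula becomes
\begin{equation*}
  G_J(s,t) = -\frac{(M_{[a',s]})_{12}\,(M_{[t,b']})_{12}}{(M_J)_{12}}, \qquad s \le t,
\end{equation*}
symmetrically for $s \ge t$. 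The partial $\partial_s G_J$ has the same denominator, with one factor in the numerator replaced by a $(\cdot)_{22}$ or $(\cdot)_{11}$ entry, still bounded by the corresponding transfer matrix norm.

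For the denominator, multiplicativity $M_I = M_{[b', b]} M_J M_{[a, a']}$ together with the uniform upper bound (\cref{cor:uniform-upper-bound}) gives, for any admissible $(k_1, k_2)$,
\begin{equation*}
  \log\norm{M_J} \ge |I| L - K - (k_1 + k_2) L - C(k_1^{1-\sigma} + k_2^{1-\sigma}) \ge |J| L - 2K,
\end{equation*}
the last inequality using $K \ge C(|I|^{1-\sigma} + 1)$ to absorb the subadditive error. The harder task—and the only genuinely continuous-specific step of the proof—is to further arrange $|(M_J)_{12}| \ge c\norm{M_J}$. By $\hsnorm{M_J}^2 = \sum_{i,j}|(M_J)_{ij}|^2 \ge \norm{M_J}^2$, some entry of $M_J$ has magnitude $\ge \norm{M_J}/2$; using the ODE in the endpoints,
\begin{equation*}
  \partial_{b'}(M_{[a',b']})_{12} = (M_{[a',b']})_{22}, \qquad \partial_{a'}(M_{[a',b']})_{12} = -(M_{[a',b']})_{11},
\end{equation*}
together with the analogous formulas for the other entries, a short case analysis on which entry dominates shows that a shift of $O(1)$ in $a'$, in $b'$, or—in the least favorable case when $(M_J)_{21}$ dominates—in both endpoints (one shift to boost $(M_J)_{22}$ or $(M_J)_{11}$ to the order of $\norm{M_J}$, the next to convert that into a large $(M_J)_{12}$) always forces $|(M_J)_{12}| \ge c\norm{M_J}$ for a universal $c > 0$. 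Since the total shift is $O(1) \le 4K/\gamma$, the displayed lower bound on $\log\norm{M_J}$ survives up to an additive $O(1)$, again absorbed into $K$.

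With $J$ so chosen, \cref{cor:uniform-upper-bound} bounds $|(M_{[a',s]})_{12}| \le \norm{M_{[a',s]}} \le \exp\bigl((s-a')L + C|J|^{1-\sigma}\bigr)$ and similarly for $(M_{[t,b']})_{12}$, so that for $s \le t$ in $J$,
\begin{equation*}
  |G_J(s,t)| \le \frac{\exp\bigl((s-a')L + C|J|^{1-\sigma}\bigr)\exp\bigl((b'-t)L + C|J|^{1-\sigma}\bigr)}{c\exp(|J| L - 2K)} \le \exp\bigl(-(t-s)L + 2K\bigr),
\end{equation*}
after absorbing $C|J|^{1-\sigma}$ and $-\log c$ into $K$ via the hypothesis $K \ge C(|I|^{1-\sigma} + 1)$. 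The same computation with the numerator replacements described above yields the identical bound for $|\partial_s G_J|$. The main obstacle is the comparability step in the second paragraph: it has no discrete counterpart, where the analogous ratio is automatically of order one, and is exactly where the continuous case genuinely diverges from its discrete analogue.
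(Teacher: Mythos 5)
Your argument is correct and follows the same four-way case split that the paper does (each of the cases $(M_J)_{12}$, $(M_J)_{22}$, $(M_J)_{11}$, $(M_J)_{21}$ dominant, with no shift, one shift of $b'$, one shift of $a'$, or shifts of both respectively), but the mechanism you use to move the ``mass'' onto the $(1,2)$ entry is genuinely different. The paper produces the shift via the mean value theorem combined with the global uniform upper bound from \cref{cor:uniform-upper-bound}: knowing $|v_a'(b)|\ge\frac12\exp(|I|L-K)$ and $|v_a'(t)|\le\exp((t-a)L+C|I|^{1-\sigma})$, the difference quotient yields a large $|v_a(\tilde t)|$ only once $b-t$ reaches order $2K/\gamma$, so the shift there is of size $O(K/\gamma)$. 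You instead observe the cleaner reformulation ``arrange $|(M_J)_{12}|\ge c\norm{M_J}$'' and obtain it with an $O(1)$ shift, using the fact that a solution of $y''=(V-E)y$ with bounded coefficients cannot keep $|y|\ll|y'(b')|$ over an $O(1)$ interval (a short Gr\"onwall/MVT argument local to the endpoint, with constants depending only on $\|V\|_\infty$ and $|E|$). Both routes deliver the conclusion; yours buys a more conceptual statement (comparability of the entry to the norm, with a shift that is trivially within the $4K/\gamma$ budget), while the paper's buys the explicit intermediate estimates $|v_a(\tilde t)|\ge\exp(|I|L-3K/2)$ etc. Two small things you should be aware of: you leave the ``short case analysis'' and the local ODE lower bound as assertions rather than spelling them out, and in the $(2,1)$-dominant case you need to check that after the first shift the $(1,1)$ entry of the new window dominates its norm (not merely that $u_{a'}(\tau)$ is large in absolute terms) before the second shift applies; both are fillable by exactly the local argument you describe, together with the $O(1)$-stability of $\log\norm{M_J}$ under endpoint shifts that you already invoke.
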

\begin{proof}
  By our assumption, at least one of the entries of $ M_I $ has to be
  $ \ge \frac{1}{2} \exp(|I|L-K) $. We treat each of the four
  possibilities separately.

  \noindent (1) Suppose
  \begin{equation*}
    |v_a(b)|\ge \frac{1}{2} \exp(|I|L-K).
  \end{equation*}
  In this case we let $ J=I $.  Using
  \cref{cor:uniform-upper-bound} we have
  \begin{multline*}
	|G_I(s,t)|= \left| \frac{v_a(s)v_b(t)}{v_a(b)} \right|
    \le 2 \exp((s-a)L+(b-t)L+C|I|^{1-\sigma}-(b-a)L+K)\\
    \le \exp(-(t-s)L+2K)
  \end{multline*}
  provided $ s\le t $ (it is enough to consider this case because
  $ G_I(s,t)=G_I(t,s) $).
  We used the fact that
  \begin{equation}\label{eq:u_a-u_b}
    W(v_a,v_b)=v_a(b)=-v_b(a).
  \end{equation} 
  The bound on $ |\partial_s G_I| $ is
  obtained in the same way, because the bounds from \cref{cor:uniform-upper-bound} apply to all the entries of the
  transfer matrix.

  \noindent (2) Suppose
  \begin{equation*}
    |v_a'(b)|\ge \frac{1}{2} \exp(|I|L-K).
  \end{equation*}
  For any $ t\in(a,b) $, there exists $ \tilde t\in(t,b) $ such that
  \begin{equation}
	|v_a'(b)-v_a'(t)|=|v_a''(\,\tilde t\,)(b-t)|
    =|v_a(\,\tilde t\,)(V(\tilde t,\theta+\tilde t \omega)-E)(b-t)|.
  \end{equation}
  Using \cref{cor:uniform-upper-bound} and choosing $ t $ so that
  $ b-t= 2K/\gamma $, it follows that
  \begin{multline*}
	|v_a(\,\tilde t\,)|\ge \frac{1}{C(V,|E|)(b-t)}|v_a'(b)-v_a'(t)|\\
    \ge \frac{1}{2C(b-t)}\exp(|I|L-K)
    \left[ 1-\exp((t-a)L+C|I|^{1-\sigma}-|I|L+K+\log 2) \right]\\
    \ge \frac{1}{2C(b-t)}\exp(|I|L-K)\left[ 1-\frac{1}{2}\exp(-(b-t)L+2K) \right]\\
    \ge \frac{1}{4C(b-t)}\exp(|I|L-K)\ge \exp(|I|L-3K/2).
  \end{multline*}
  The conclusion follows by the reasoning from case (1) applied to
  $ J=[a,\tilde t\,] $.
  
  \noindent (3) Suppose
  \begin{equation*}
    |u_a(b)|\ge \frac{1}{2} \exp(|I|L-K).
  \end{equation*}
  Note that we have
  \begin{equation*}
    W(u_a,v_b)=u_a(b)=v_b'(a).
  \end{equation*}
  Then, by the reasoning from case (2), there exists
  $ \tilde t\in I $, $ \tilde t-a\le 2K/\gamma $ such that
  \begin{equation*}
    |v_b(\,\tilde t\,)|\ge \exp(|I|L-3K/2).
  \end{equation*}
  Recall from \cref{eq:u_a-u_b} that we have
  $ |v_b(\,\tilde t\,)|=|v_{\,\tilde t\,}(b)| $ and so the conclusion
  follows by the argument from case (1) applied on
  $ J=[\,\tilde t,b] $.

  \noindent (4) Suppose
  \begin{equation*}
    |u_a'(b)|\ge \frac{1}{2} \exp(|I|L-K).
  \end{equation*}
  By the argument from case (2), there exists $ \tilde t $,
  $ b-\tilde t\le 2K/\gamma $ such that
  \begin{equation*}
    |u_a(\,\tilde t\,)|\ge \exp(|I|L-3K/2).
  \end{equation*}
  Following the reasoning from case (3) we get that there exists
  $ \bar t $, $ \bar t-a\le 2K/\gamma $ such that
  \begin{equation*}
    |v_{\,\bar t\,}(\,\tilde t\,)|\ge \exp(|I|L-5K/4).
  \end{equation*}
  The conclusion follows as in case (1) by taking
  $ J=[\,\bar t,\tilde t\,] $.
\end{proof}

Next we illustrate the well-known strategy of iterating Poisson's
formula to get the exponential decay of solutions, provided that we
have the decay of Green's function.

\begin{lemma}\label{lem:Poisson-iteration}
  Let $ 0<\ell \ll a\ll b $ and $ m>0 $ such that $ m\ell\gg 1 $.
  Suppose that for any $ t\in[a,b] $ there exists an interval
  $ J=[c_t,d_t] $, $ |J|\le \ell $, such that $ t\in J $, and 
  \begin{equation*}
    |\partial_s G_J(c_t,t)|,|\partial_s G_J(d_t,t)|\le \exp(-m\ell).
  \end{equation*}
  Then any solution $ y $ of \cref{eq:eigenvalue-equation} satisfies
  \begin{equation*}
    |y(t)|\le M \exp(-mt/8),\ t\in[2a,b/2], 
  \end{equation*}
  where $ M=\sup_{[a,b]}|y| $.
\end{lemma}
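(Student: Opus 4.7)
The plan is to iterate Poisson's formula. Fix $t \in [2a, b/2]$ and let $J_0 = [c_0, d_0] \ni t$ be the interval provided by the hypothesis, so $|\partial_s G_{J_0}(c_0, t)|, |\partial_s G_{J_0}(d_0, t)| \le e^{-m\ell}$. Since $y$ satisfies the eigenvalue equation on a neighborhood of $J_0$, Poisson's formula gives
\begin{equation*}
y(t) = y(d_0)\,\partial_s G_{J_0}(d_0, t) - y(c_0)\,\partial_s G_{J_0}(c_0, t),
\end{equation*}
whence $|y(t)| \le 2 e^{-m\ell} \max(|y(c_0)|, |y(d_0)|)$. Pick $t_1 \in \{c_0, d_0\}$ realizing the maximum; if $t_1 \in [a,b]$, apply the hypothesis again at $t_1$ to obtain an interval $J_1 \ni t_1$ and continue. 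After $n$ such steps we get a chain $t = t_0, t_1, \ldots, t_n$ with $|t_k - t_{k-1}| \le \ell$ and
\begin{equation*}
|y(t)| \le (2 e^{-m\ell})^n \, |y(t_n)|.
\end{equation*}

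The crucial bookkeeping is to choose $n$ so that the whole chain stays in $[a,b]$. Set $n := \lfloor (t-a)/\ell \rfloor$. Because $t \in [2a, b/2]$, one has $t - a \le b - t$ (from $2t \le b \le a+b$), so $t_k \in [t - n\ell, t + n\ell] \subset [a, 2t - a] \subset [a, b]$ for every $0 \le k \le n$, which in particular yields $|y(t_n)| \le M$. Since $m\ell \gg 1$ we may assume $2 \le e^{m\ell/2}$, and therefore
\begin{equation*}
|y(t)| \le e^{-n m\ell/2} M \le e^{-m(t-a)/2 + m\ell/2} M.
\end{equation*}
Finally, $t \ge 2a$ gives $t - a \ge t/2$, while $\ell \ll a \le t/2$ yields $m\ell/2 \le mt/8$, so $|y(t)| \le M e^{-mt/8}$, as desired.

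The only subtle point is the combinatorial bookkeeping guaranteeing that every iterate remains in $[a,b]$ so that the terminating $M$-bound applies; this is precisely what forces the constraint $t \in [2a, b/2]$ (the lower endpoint $2a$ leaves room to retreat toward $a$, and the upper endpoint $b/2$ leaves room to advance toward $b$). Everything else is a direct application of Poisson's formula combined with the geometric decay of the prefactor $(2e^{-m\ell})^n$.
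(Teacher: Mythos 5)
Your proof is correct and follows essentially the same route as the paper: iterate Poisson's formula roughly $(t-a)/\ell$ times, using $m\ell\gg1$ to absorb the factor $2^n$ and using $t\in[2a,b/2]$ to keep the chain of endpoints inside $[a,b]$ and to turn $t-a$ into a multiple of $t$. Your choice $n=\lfloor (t-a)/\ell\rfloor$ is just the paper's $n=\min\bigl([(t-a)/\ell],[(b-t)/\ell]\bigr)$ after noting $t-a\le b-t$, so the bookkeeping is the same, only written out more explicitly.
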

\begin{proof}
  For any $ t\in[2a,b/2] $ we can iterate Poisson's formula (on intervals $ J $ satisfying the assumptions)
  at least
  \begin{equation*}
    n=\min \left( [(t-a)/\ell],[(b-t)/\ell] \right)\ge \frac{t}{4\ell}
  \end{equation*}
  times to get
  \begin{equation*}
    |y(t)|\le M 2^n \exp(-nm\ell)\le M\exp(-nm\ell/2)\le M\exp(-mt/8).
  \end{equation*}
\end{proof}

\section{Cartan Sets}\label{sec:Cartan}

We will use $ D(z,r) $ to denote the disk of radius $ r $ centered at
$ z\in \C $.
\begin{definition} Let $H \gg 1$.  For an arbitrary subset
  $\cB \subset D(z_0, 1)\subset \C$ we say that $\cB \in \Car_1(H, K)$
  if $\cB\subset \bigcup\limits^{j_0}_{j=1} D(z_j, r_j)$ with
  $j_0 \le K$, and
  \begin{equation}
    \sum_j\, r_j < e^{-H}\ .
  \end{equation}
  If $d$ is a positive integer greater than one and
  $\cB \subset \prod\limits_{i=1}^d D(z_{i,0}, 1)\subset \C^d$ then we
  define inductively that $\cB\in \Car_d(H, K)$ if for any
  $ J\subset \{1,\ldots,d\} $, $ |J|<d $, there exists
  \begin{equation*}
    \cB_J \subset \prod_{j\in J} D(z_{j,0}, 1) \subset \C^{|J|},\ \cB_J \in \Car_{|J|}(H, K)
  \end{equation*}
  so that $\cB_{J'}(z) \in \Car_{|J'|}(H, K)$ for any
  $z \in \C^{|J|} \setminus \cB_J$, where
  \begin{equation*}
    \cB_{J'}(z) = \{ w_{J'} : w\in \cB, w_J=z   \}.
  \end{equation*}
  We used $ J' $ to denote $ \{ 1,\ldots,d \}\setminus J $ and given
  $ z\in \C^d $, $ z_J $ denotes the vector $ (z_j)_{j\in J} $.
\end{definition}

The above definition is a simple extension of
\cite[Def. 2.12]{GolSch08}, where only the case $ |J|=1 $ is
considered.  The reason behind the definition of Cartan sets is the
following result, referred to as the Cartan estimate. The Cartan
estimate from \cite{GolSch08} holds even with this slightly more
general definition. The proof is essentially the same, one only needs
to use complete induction instead of the regular induction used in
\cite{GolSch08}.
\begin{lemma}[{\cite[Lem. 2.15]{GolSch08}}]\label{lem:Cartan-estimate}
  Let $\varphi(z_1, \dots, z_d)$ be an analytic function defined in a
  polydisk $P = \prod\limits^d_{j=1} D(z_{j,0}, 1)$, $z_{j,0} \in \C$.
  Let $M \ge \sup\limits_{z\in P} \log |\varphi(z)|$,
  $m \le \log \bigl |\varphi(z_0)\bigr |$,
  $z_0 = (z_{1,0},\dots, z_{d,0})$.  Given $H\gg 1$, there exists a
  set $\cB \subset P$, $\cB \in \Car_d\left(H^{1/d}, K\right)$,
  $K = C_d H(M - m)$, such that
  \begin{equation}\label{eq:cart_bd}
    \log \bigl | \varphi(z)\bigr | > M-C_d H(M-m)
  \end{equation}
  for any $z \in \prod^d_{j=1} D(z_{j,0}, 1/6)\setminus \cB$.
\end{lemma}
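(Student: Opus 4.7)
The plan is to prove the statement by complete induction on $d$, following the reasoning of \cite{GolSch08} but invoking the inductive hypothesis in every dimension strictly less than $d$ (rather than only in dimension one, which suffices in \cite{GolSch08} because their definition of $\Car_d$ demands the slicing property only for $|J|=1$). The bad set will be taken to be
\begin{equation*}
    \cB := \bigl\{ z \in P : \log|\varphi(z)| \le M - C_d H(M-m)\bigr\},
\end{equation*}
and the task reduces to verifying that $\cB \in \Car_d(H^{1/d}, K)$ with $K = C_d H(M-m)$.

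For the base case $d=1$ we invoke the classical one-variable Cartan estimate on small values of analytic functions: the sublevel set $\{ z \in D(z_{0},1/6) : \log|\varphi(z)| \le M - C_1 H(M-m)\}$ can be covered by at most $C_1 H(M-m)$ disks whose radii sum to at most $e^{-H}$. This is exactly membership in $\Car_1(H,K)$ with $K = C_1 H(M-m)$, and gives both the Cartan property and the pointwise lower bound outside $\cB$.

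For the inductive step, fix an arbitrary proper subset $J \subsetneq \{1,\ldots,d\}$ with $|J|=p$ and $|J'| = q = d-p$, and consider the slice $\varphi_J(z_J) := \varphi(z_J, z_{0,J'})$ obtained by freezing the $J'$-coordinates at the polydisk center. Then $\varphi_J$ is analytic on a $p$-dimensional polydisk with $\sup \log|\varphi_J| \le M$ and $\log|\varphi_J(z_{0,J})| = \log|\varphi(z_0)| \ge m$. Applying the inductive hypothesis to $\varphi_J$ with parameter $H^{p/d}$ (so that $(H^{p/d})^{1/p} = H^{1/d}$, matching the exponent needed in the outer statement) yields a set
\begin{equation*}
    \cB_J \in \Car_p\!\bigl(H^{1/d},\ C_p H^{p/d}(M-m)\bigr) \subseteq \Car_p\!\bigl(H^{1/d},\ K\bigr)
\end{equation*}
such that $\log|\varphi_J(z_J)| > M - C_p H^{p/d}(M-m)$ whenever $z_J \notin \cB_J$. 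For each such $z_J$ one then applies the inductive hypothesis in dimension $q$ to $\psi(z_{J'}) := \varphi(z_J,z_{J'})$, using the upper bound $M$ and the lower bound $m_{J'} := M - C_p H^{p/d}(M-m)$ at $z_{0,J'}$, with parameter $H^{q/d}$. This furnishes $\cB_{J'}(z_J) \in \Car_q(H^{1/d}, K)$ together with $\log|\psi(z_{J'})| > M - C_q H^{q/d}(M - m_{J'}) = M - C_p C_q H(M-m)$ for $z_{J'}$ outside this set. Choosing $C_d \ge \max_{p+q=d} C_p C_q$ makes every slice of $\cB$ fit inside the Cartan set just constructed, which closes the verification.

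The principal obstacle is bookkeeping the parameter scaling: the exponent $H^{1/d}$ in the conclusion is rigidly chosen so that the inductive application in dimension $p$ with parameter $H^{p/d}$ produces Cartan sets with the desired radius parameter $H^{1/d}$, and so that the factors $M - m_{J'}$ multiplying through the induction collapse to a single $M-m$ up to a constant $C_d$. Ensuring $C_d$ can be taken uniformly over the finitely many decompositions $d = p+q$, and that $K = C_d H(M-m)$ dominates each intermediate $C_p H^{p/d}(M-m)$, is the main technical care required; the underlying analytic content reduces to the one-variable Cartan estimate plus the elementary observation that restricting an analytic function to a coordinate slice through the center preserves both the global upper bound $M$ and the value at the center.
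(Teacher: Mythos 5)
Your strategy---complete induction on $d$, mirroring the one-step induction of \cite{GolSch08}---is exactly what the paper indicates (the paper only remarks that ``the proof is essentially the same, one only needs to use complete induction''), and the parameter bookkeeping is handled correctly: applying the inductive hypothesis in dimension $p$ with parameter $H^{p/d}$ and then in dimension $q=d-p$ with parameter $H^{q/d}$ keeps both outputs in the required $\Car_{\cdot}(H^{1/d},\cdot)$ classes, and the composed degradation $C_pC_q H(M-m)$ is absorbed into $C_d H(M-m)$ once $C_d\ge\max_{p+q=d}C_pC_q$.

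There is, however, one genuine defect in the construction: taking $\cB:=\{z\in P:\log|\varphi(z)|\le M-C_dH(M-m)\}$ over the \emph{full} polydisk $P=\prod_j D(z_{j,0},1)$ does not yield a set in $\Car_d(H^{1/d},K)$. When you check the slicing condition for a subset $J$, the inductive hypothesis applied to the center slice $\varphi_J(z_J)=\varphi(z_J,z_{0,J'})$ gives the lower bound $\log|\varphi_J(z_J)|>M-C_pH^{p/d}(M-m)$ only for $z_J\in\prod_{j\in J}D(z_{j,0},1/6)\setminus\cB_J$. For $z_J$ in the annular region $\prod_{j\in J}D(z_{j,0},1)\setminus\prod_{j\in J}D(z_{j,0},1/6)$ you have no lower bound at all on $\varphi_J(z_J)$, so you cannot launch the second application of the inductive hypothesis to $\psi(z_{J'})=\varphi(z_J,z_{J'})$, and the slice $\cB_{J'}(z_J)$ of your $\cB$ could fail to be Cartan---yet these $z_J$ cannot be absorbed into $\cB_J$, which must itself be a small Cartan set. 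The fix is to define $\cB$ as the sublevel set intersected with the inner polydisk $\prod_j D(z_{j,0},1/6)$ (this is all the lemma's conclusion requires): then every slice $\cB_{J'}(z_J)$ with $z_J$ outside $\prod_{j\in J}D(z_{j,0},1/6)$ is empty, hence trivially Cartan, and your argument covers the remaining $z_J$. You should also make explicit the (easy, inductive) fact that $\Car_q(H,K)$ is closed under passage to subsets, since what you actually show is that the true slice of $\cB$ is \emph{contained} in the Cartan set furnished by the inductive hypothesis, not equal to it.
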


Let us note that the definition of the Cartan sets gives information
about their measure.
\begin{lemma}\label{lem:Cartan-measure}
  For any $\cB \subset \prod\limits_{i=1}^d D(z_{i,0}, 1)\subset \C^d$
  such that $ \cB\in \Car_d(H,K) $ we have
  \begin{equation*}
    \mes_{\C^d}(\cB)\lesssim d e^{-H} \text{ and } \mes_{\R^d}(\cB\cap \R^d)\lesssim d e^{-H}.
  \end{equation*}
\end{lemma}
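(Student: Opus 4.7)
The plan is to prove both measure bounds by induction on $d$, exploiting the recursive structure of the Cartan set definition via Fubini's theorem.

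For the base case $d=1$, the set $\cB$ is covered by disks $D(z_j,r_j)$ with $\sum r_j<e^{-H}$. Hence
\begin{equation*}
  \mes_\C(\cB)\le \pi\sum_j r_j^2 \le \pi\Bigl(\sum_j r_j\Bigr)^2<\pi e^{-2H},
\end{equation*}
which is certainly $\lesssim e^{-H}$, while each $D(z_j,r_j)\cap\R$ is an interval of length at most $2r_j$, so $\mes_\R(\cB\cap\R)<2e^{-H}$.

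For the inductive step I would take $\cB\in\Car_d(H,K)$ and apply the definition with $J=\{1\}$, which supplies a set $\cB_{\{1\}}\in\Car_1(H,K)$ such that for every $z\in D(z_{1,0},1)\setminus\cB_{\{1\}}$ the slice $\cB_{\{2,\ldots,d\}}(z)$ lies in $\Car_{d-1}(H,K)$. Fubini then gives
\begin{equation*}
  \mes_{\C^d}(\cB)\le \pi^{d-1}\,\mes_\C(\cB_{\{1\}}) \;+\; \pi\cdot\sup_{z\notin\cB_{\{1\}}}\mes_{\C^{d-1}}\bigl(\cB_{\{2,\ldots,d\}}(z)\bigr),
\end{equation*}
where the first term uses the trivial bound $\pi^{d-1}$ for slices inside $\cB_{\{1\}}$ (each slice being contained in the unit polydisk) and the second term is controlled by the inductive hypothesis by $C(d-1)e^{-H}$. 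Combined with the base case estimate $\mes_\C(\cB_{\{1\}})<\pi e^{-2H}$, this yields
\begin{equation*}
  \mes_{\C^d}(\cB)\le \pi^{d} e^{-2H}+C(d-1)\pi\, e^{-H}\lesssim d\,e^{-H}.
\end{equation*}
The real case is handled identically, with the trivial slice bound $2^{d-1}$ in place of $\pi^{d-1}$ and the base-case bound $2e^{-H}$ on $\mes_\R(\cB_{\{1\}}\cap\R)$.

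The only subtle point is extracting linear-in-$d$ growth of the constant despite the trivial slice bounds $\pi^{d-1}$, $2^{d-1}$ that blow up exponentially in $d$. The saving comes from the quadratic strength of the $d=1$ bound: $\mes_\C(\cB_{\{1\}})=O(e^{-2H})$ provides an extra $e^{-H}$ factor, which absorbs the exponential-in-$d$ volume factor under the standing assumption $H\gg 1$. (If one wished to obtain a bound uniform in $d$, this is where the standing hypothesis would be strengthened to $H\gtrsim d$; the use case here is clearly in the regime where $d$ is fixed and $H$ is taken large.)
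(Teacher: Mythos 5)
Your proof is correct and follows essentially the same route the paper intends (the paper's own proof is just the sketch "base case from the definition of $\Car_1$, then induction via Fubini", which is exactly what you have fleshed out). Your Fubini decomposition with $J=\{1\}$, the trivial polydisk bound $\pi^{d-1}$ on slices over $\cB_{\{1\}}$, and the inductive bound on the good slices is the natural reading of that sketch.

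One small inaccuracy in your discussion of the constants: the "extra $e^{-H}$ factor from the quadratic strength of the $d=1$ bound" explains the complex case only. For the real trace, the base case is $\mes_\R(\cB_{\{1\}}\cap\R)\le 2e^{-H}$, linear rather than quadratic in $e^{-H}$, so the recursion gives roughly $d\,2^d e^{-H}$ and there is no extra exponential to spend; the $2^d$ must be absorbed directly by the standing hypothesis $H\gg 1$ (i.e.\ $H\gtrsim d$), not via a squared decay. Thus "handled identically" is slightly too quick. This does not affect the validity of the lemma as used in the paper: $d$ is fixed throughout, the implicit constant in $\lesssim$ may depend on $d$, and your final remark about the intended regime is the correct resolution. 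In fact, a cylinder $\cB_1\times\prod_{i\ge 2}D(z_{i,0},1)$ with $\cB_1$ a disk of radius $e^{-H}$ centered on $\R$ is easily seen to lie in $\Car_d(H,K)$ and has real trace of measure $\asymp 2^d e^{-H}$, showing that a genuinely $d$-free constant is impossible; the paper is simply being informal on this point.
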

\begin{proof}
  The case $ d=1 $ follows immediately from the definition of
  $ \Car_1 $. The case $ d>1 $ follows by induction, using Fubini and
  the definition of $ \Car_d $.
\end{proof}

We use Cartan's estimate to argue that if the large deviations
estimate fails then for fixed phase and frequency the energy must be
in a finite union of small intervals, with a good bound on the number
of intervals. This is only possible up to some small exceptional sets
of phases and frequencies. To be able to apply Cartan's estimate
effectively we need to restrict ourselves to the case when the
Lyapunov exponent is positive, so that we have the uniform upper bound
from \cref{prop:uniform-upper-bound}.
\begin{proposition}\label{prop:Cartan-E}
  Let $ I=[a,b] $. Let $ [E',E'']\subset \R $, $ \gamma>0 $ and
  \begin{equation*}
    \cP_I= \{ (\omega,E)\in \DC_{|I|}\times [E',E'']: L_I(\omega,E)\ge \gamma   \}.
  \end{equation*}
  Let
  \begin{equation*}
    H\ge C(\log(1+|I|))^A,\ C=C(V,d,\DC,E',E'',\gamma),\  A=A(\sigma,d).
  \end{equation*}
  If $ |I|\ge C(V,d,\DC,E',E'',\gamma) $ then there exists
  \begin{equation*}
    \Theta_I \subset \T^d,\ \mes(\Theta_I)\le \max(|a|,|b|)^d \exp(-H^{1/(2d+1)}/2)
  \end{equation*}
  such that if $ \theta\in \T^d\setminus \Theta_I $ and
  $ (\omega,E)\in \cP_I $ are such that
  \begin{equation*}
    \log\norm{M_I(\theta,\omega,E)}\le |I|L_I(\omega,E)-CH|I|^{1-\sigma},\ C=C(V,d,\DC,E',E'',\gamma)
  \end{equation*}
  then either $ \omega\in \Omega_{I,\theta} $ or
  $ E\in \cE_{I,\theta,\omega} $, where
  $ \mes(\Omega_{I,\theta})\le \exp(-H^{1/(2d+1)}/2) $ and
  $ \cE_{I,\theta,\omega} $ is the union of less than
  \begin{equation*}
    H\exp(C(\log(1+|I|))^{2/\sigma}),\ C=C(V,d,\DC,E',E'',\gamma)
  \end{equation*}
  intervals, each of measure less than $ \exp(-H^{1/(2d+1)}) $.
\end{proposition}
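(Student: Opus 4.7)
The plan is to apply Cartan's estimate (\cref{lem:Cartan-estimate}) in the $2d+1$ complex variables $(\theta,\omega,E)$ on a fine grid of centers in $\T^d\times\T^d\times[E',E'']$, and then read off the three-layer exceptional sets $\Theta_I$, $\Omega_{I,\theta}$, $\cE_{I,\theta,\omega}$ from the recursive structure of the resulting $\Car_{2d+1}$ sets. The exponent $H^{1/(2d+1)}$ appearing throughout the conclusion is exactly what Cartan's estimate produces in $2d+1$ variables, which is the main reason for the given power.

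First I would cover $\T^d\times\T^d\times[E',E'']$ by polydisks of radii $r_\theta=\rho(V)/(1+|I|)$ in $\theta$, $\delta_1=\exp(-C(\log(1+|I|))^{2/\sigma})/\max(|a|,|b|)$ in $\omega$ (centered at points of $\DC_{|I|}$), and $\delta_2=\exp(-C(\log(1+|I|))^{2/\sigma})$ in $E$, with $C=C(V,d,\DC,E',E'',\gamma)$ as in \cref{prop:uniform-upper-bound}. These are the largest radii on which that proposition yields the uniform upper bound $M\le |I|L_I(\omega_0,E_0)+C|I|^{1-\sigma}$ throughout a polydisk centered at a grid point with $(\omega_0,E_0)\in\cP_I$. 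The total number of polydisks is $N\lesssim\max(|a|,|b|)^d\exp(C(\log(1+|I|))^{2/\sigma})$ after absorbing the polynomial factor $N_\theta\sim(|I|/\rho)^d$ into the exponential.

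On each rescaled unit polydisk $P$, I first discard a set of centers $\theta_0$ of measure $\le C\exp(-c|I|^\sigma)$ via \cref{thm:ldt}; outside this set, $\log\|M_I(\theta_0,\omega_0,E_0)\|\ge |I|L_I(\omega_0,E_0)-C|I|^{1-\sigma}$, so since $\|M_I\|\le 2\max_k|f_k|$, some analytic matrix entry $f_k$ of $M_I$ satisfies the same lower bound at the center (up to $\log 2$). Applying \cref{lem:Cartan-estimate} to that $f_k$ on $P$ with $M-m\lesssim|I|^{1-\sigma}$ yields a set $\cB^{(P)}\in\Car_{2d+1}(H^{1/(2d+1)},\,CH|I|^{1-\sigma})$ outside of which $\log\|M_I\|\ge\log|f_k|>|I|L_I(\omega_0,E_0)-CH|I|^{1-\sigma}$. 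Every $(\theta,\omega,E)$ violating the hypothesis of the proposition then lies in $\bigcup_P\cB^{(P)}$.

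Finally I would read off the exceptional sets using the recursive definition of $\Car_{2d+1}$, taking $J$ successively to be the $\theta$-coordinates and then the $\omega$-coordinates, leaving a $\Car_1$ slice in $E$: $\Theta_I$ gathers the LDT-discarded $\theta_0$'s and the $\theta$-projections of the $\cB^{(P)}$'s; $\Omega_{I,\theta}$ gathers the $\omega$-projections of the slices $\cB^{(P)}(\theta)$ for $\theta\notin\Theta_I$; and $\cE_{I,\theta,\omega}$ is the union of $E$-slices, each a $\Car_1$ set of at most $K=CH|I|^{1-\sigma}$ disks of radii summing to $\le\exp(-H^{1/(2d+1)})$. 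Rescaling \cref{lem:Cartan-measure} by the polydisk radii and using the cancellations $N_\theta r_\theta^d=N_\omega\delta_1^d=1$, one finds $\mes(\Theta_I)\lesssim N_\omega N_E e^{-H^{1/(2d+1)}}$, $\mes(\Omega_{I,\theta})\lesssim N_E e^{-H^{1/(2d+1)}}$, and the number of intervals in $\cE_{I,\theta,\omega}$ is $\lesssim N_E K$. The choice $A=2(2d+1)/\sigma$ in the hypothesis $H\ge C(\log(1+|I|))^A$ lets $e^{H^{1/(2d+1)}/2}$ absorb the remaining $e^{C(\log(1+|I|))^{2/\sigma}}$ and polynomial-in-$|I|$ factors, yielding all three stated bounds.

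The main obstacle is the scaling bookkeeping: the three groups of variables live in polydisks of very different radii ($\sim 1/|I|$ for $\theta$ and $\sim 1/(\max(|a|,|b|)\cdot e^{(\log|I|)^{2/\sigma}})$ for $\omega$), and the Cartan measure from \cref{lem:Cartan-measure} must be rescaled consistently while balancing the grid count against the exponential Cartan gain. A secondary subtlety is that $\log\|M_I\|$ itself is not analytic; the argument must be routed through a matrix entry $f_k$, which is legitimate only because \cref{thm:ldt} forces some $f_k$ to be large at the center of each relevant polydisk.
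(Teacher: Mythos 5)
Your overall strategy matches the paper's quite closely: cover $(\theta,\omega,E)$-space by polydisks, use \cref{thm:ldt} to locate a point where the log-norm is large, invoke \cref{prop:uniform-upper-bound} for the ceiling, apply \cref{lem:Cartan-estimate} to an analytic surrogate of $\log\norm{M_I}$, and read off the three exceptional layers from the recursive structure of $\Car_{2d+1}$ via \cref{lem:Cartan-measure}. The cosmetic differences (applying Cartan to one matrix entry $f_k$ rather than to the sum of squares $f_I=u_a(b)^2+u_a'(b)^2+v_a(b)^2+v_a'(b)^2$ as the paper does; choosing $\theta$-radius $\sim 1/|I|$ rather than $\sim r$; discarding bad $\theta_0$'s rather than shifting the Cartan center to a good point inside each polydisk as the paper does) are legitimate and would only change constants and bookkeeping.

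However, there is a genuine gap that you do not address: the Cartan conclusion you extract is a lower bound of the form $\log\norm{M_I(\theta,\omega,E)} > |I|L_I(\omega_0,E_0) - C'H|I|^{1-\sigma}$ anchored to the Lyapunov exponent at the \emph{grid point} $(\omega_0,E_0)$, whereas the hypothesis of the proposition you need to contradict involves $L_I(\omega,E)$ at the \emph{actual} $(\omega,E)$. You cannot pass from one to the other by the rough stability estimate of \cref{lem:stability-rough}, because that bound degrades by a factor $e^{C|I|}/\max\norm{M_I}$ which is useless when $\norm{M_I}$ is moderate. The paper closes this gap by invoking \cref{cor:Lyapunov-stability} (which is built on the Avalanche-Principle machinery) to show that $|L_I(\omega_0,E_0)-L_I(\omega,E)|$ is polynomially small when $\max(|a|,|b|)\norm{\omega-\omega_0}+|E-E_0|\le \exp(-C(\log(1+|I|))^{2/\sigma})$; this is exactly the radius $r$ you chose in the $\omega$ and $E$ variables, and is the reason that radius must be that small. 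Without this stability step, the claim ``every $(\theta,\omega,E)$ violating the hypothesis lies in $\bigcup_P \cB^{(P)}$'' does not follow. You should also be aware that discarding grid centers $\theta_0$ for which the LDT fails is dangerous if interpreted at the level of whole $\theta$-polydisks: the LDT-exceptional set has tiny measure but could in principle hit many grid points. The paper sidesteps this by instead selecting, inside each polydisk, a possibly off-center point $(\theta_\iota,\omega_\iota,E_\iota)$ where the LDT holds (possible because the LDT-exceptional measure is far smaller than $r^d$), and then centering both the Cartan scaling $S_\iota$ and the resulting exceptional sets at that point; this is cleaner and avoids an extra union over discarded polydisks.
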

\begin{proof}
  Let $ r=\exp(-C(\log(1+|I|))^{2/\sigma}) $ and let
  \begin{gather*}
	P(\theta_j,r),\ 1\le j \lesssim r^{-d}\\
    P(\omega_k,r/\max(|a|,|b|)),\ 1\le k \lesssim r^{-d}\max(|a|,|b|)^d\\
    D(E_l,r),\ 1\le l \lesssim |E''-E'|,\ 1\le l \lesssim r^{-1}
    |E''-E'|
  \end{gather*}
  be covers of $ \T^d $, $ \DC_{|I|} $, and $ [E',E''] $ respectively
  ($ P(\cdot,r) $ denotes a polydisk of radius $ r $).  Let
  \begin{equation*}
    \cI= \{ (j,k,l):  (P(\theta_j,r)\times P(\omega_k,r/\max(|a|,|b|)) \times D(E_l,r))\cap (\T^d\times \cP_I)\neq \emptyset \}.
  \end{equation*}

  Let $ \iota=(j,k,l)\in \cI $ and
  \begin{equation*}
    P_\iota(r)=P(\theta_j,r)\times P(\omega_k,r/\max(|a|,|b|)) \times D(E_l,r).
  \end{equation*}
  By the definition of $ \cI $ and \cref{thm:ldt} there exists
  $ (\theta_\iota, \omega_\iota,E_\iota)\in P_\iota(r)\cap (\T^d\times
  \cP_I) $ such that
  \begin{equation*}
    \log\norm{M_I(\theta_\iota,\omega_\iota,E_\iota)}\ge |I|L_I(\omega_\iota,E_\iota)-|I|^{1-\sigma}.
  \end{equation*}
  Let
  \begin{equation*}
    \tilde P_\iota(r)=P(\theta_\iota,r)\times P(\omega_\iota,r/\max(|a|,|b|)) \times D(E_\iota,r).
  \end{equation*}
  Since $ L_I(\omega_\iota,E_\iota)\ge \gamma $ we can use
  \cref{prop:uniform-upper-bound} to guarantee that
  \begin{equation*}
    \sup \{  \log\norm{M_I(\theta,\omega,E)} : (\theta,\omega,E)\in \tilde P_\iota (12r) \}
    \le |I|L_I(\omega_\iota,E_\iota)+C|I|^{1-\sigma}
  \end{equation*}
  with $ C=C(V,d,\DC,E',E'',\gamma) $ (we just need to take $ C $ from
  the definition of $ r $ to be large enough).

  Next we set things up to apply \cref{lem:Cartan-estimate}. Let
  \begin{gather*}
    f_I=u_a(b)^2+u_a'(b)^2+v_a(b)^2+v_a'(b)^2\\
    S_\iota(\theta,\omega,E)=(\theta_\iota+12 r\theta,\omega_\iota+12 r\omega/\max(|a|,|b|),E_\iota+12 rE)\\
    \phi_\iota(\theta,\omega,E)=f_I(S_\iota(\theta,\omega,E)),\
    (\theta,\omega,E)\in D(0,1)^{2d+1}.
  \end{gather*}
  Note that we have
  \begin{equation*}
    |f_I|\le \hsnorm{M_I}^2\le 2\norm{M_I}^2
  \end{equation*}
  and if $ (\theta,\omega,E)\in\R^{2d+1} $, then
  \begin{equation*}
    |f_I(\theta,\omega,E)|= \hsnorm{M_I(\theta,\omega,E)}^2\ge \norm{M_I(\theta,\omega,E)}^2.
  \end{equation*}
  and therefore
  \begin{gather*}
    \log|\phi_\iota(\theta_\iota,\omega_\iota,E_\iota)|\ge 2(|I|L_I(\omega_\iota,E_\iota)-|I|^{1-\sigma}),\\
    \ \sup_{D(0,1)^{2d+1}} \log|\phi_\iota(\theta,\omega,E)|\le
    2(|I|L_I(\omega_\iota,E_\iota)+C|I|^{1-\sigma}).
  \end{gather*}
  By applying Cartan's estimate to $ \phi_\iota $ and by using
  \cref{cor:Lyapunov-stability} we get that
  \begin{equation*}
    \log\norm{M_I(\theta,\omega,E)}\ge |I|L_I(\omega_\iota,E_\iota)-CH|I|^{1-\sigma}
    \ge |I|L_I(\omega,E)-2CH|I|^{1-\sigma}
  \end{equation*}
  for all
  \begin{equation*}
    (\theta,\omega,E)\in P_\iota(r)\setminus S_\iota(\cB_\iota)\subset \tilde P_\iota(2r)\setminus S_\iota(\cB_\iota)
  \end{equation*}
  with $ \cB_\iota\in \Car_{2d+1}(H^{1/(2d+1)},K) $,
  $ K=CH|I|^{1-\sigma} $.

  From the definition of Cartan sets we know that there exists a set
  $ \Theta_\iota\in \Car_d(H^{1/(2d+1)},K) $ such that if
  $ \theta\notin \Theta_\iota $ then
  $ \cB_\iota(\theta)\in \Car_{d+1}(H^{1/(2d+1)},K) $, with
  \begin{equation*}
    \cB_\iota(\theta)= \{ (\omega,E): (\theta,\omega,E)\in\cB_\iota  \}.
  \end{equation*}
  Applying the definition again we see that if
  $ (\omega,E)\in \cB_\iota(\theta) $ then either
  \begin{equation*}
    \omega\in \Omega_{\iota,\theta},\  \Omega_{\iota,\theta} \in \Car_d(H^{1/(2d+1)},K)
  \end{equation*}
  or
  \begin{equation*}
    E\in \cE_{\iota,\theta,\omega}:=\cB_\iota(\theta,\omega),\ \cE_{\iota,\theta,\omega}\in \Car_1(H^{1/(2d+1)},K).
  \end{equation*}
  Note that by the definition of $ \Car_1 $ we have that
  $ \cE_{\iota,\theta,\omega}\cap\R $ is contained in the union of at
  most $ K $ intervals, each of measure smaller than
  $ \exp(-H^{1/(2d+1)}) $.

  Let
  \begin{gather*}
	j_0=j_0(\theta)= \min \{j: \theta\in P(\theta_j,r)\}\\
    k_0=k_0(\theta,\omega)=\min \{k: (\theta,\omega)\in
    P(\theta_{j_0},r)\times P(\omega_k,r/\max(|a|,|b|))\}.
  \end{gather*}
  Now the conclusion follows by setting
  \begin{gather*}
    \Theta_I= \cup \{ (\theta_j+12r \Theta_\iota)\cap P(\theta_j,r)
    :\iota=(j,k,l)\in \cI \}\cap\R^d,\\
    \Omega_{I,\theta}= \cup
    \{(\omega_k+12r/\max(|a|,|b|)\Omega_{\iota,\theta})\cap
    P(\omega_k,r/\max(|a|,|b|))
    : \iota=(j_0,k,l)\in \cI \}\cap \R^d,\\
    \cE_{I,\theta,\omega}= \cup
    \{(E_l+12r\cE_{\iota,\theta,\omega})\cap D(E_l,r) :
    \iota=(j_0,k_0,l)\in \cI\}\cap \R.
  \end{gather*}
  Note that for the measure estimates we use
  \cref{lem:Cartan-measure}.
\end{proof}

\section{Semialgebraic Sets}\label{sec:semialgebraic}

Recall that a set $ \cS\subset\R^n $ is called semialgebraic if it is
a finite union of sets defined by a finite number of polynomial
equalities and inequalities. So, a (closed) semialgebraic set is given by an expression
\begin{equation*}
  \cS=\cup_j \cap_{\ell\in L_j} \{ P_\ell s_{j\ell} 0  \},
\end{equation*}
where $ \{P_1,\ldots,P_s\} $ is a collection of polynomials of $ n $
variables, $ L_j\subset\{1,\ldots,s\} $ and
$ s_{jl}\in\{\ge,\le,=\} $. If the degrees of the polynomials are
bounded by $ d $ then we say that the degree of $ \cS $ is bounded by
$ sd $. We refer to \cite[Chap. 9]{Bou05} for more information on
semialgebraic sets.

The main result of this section is Lemma \ref{lem:semi-algebraic-all},
in which we argue that the set of $ (\theta,\omega,E) $ for which the
large deviations estimate fails is contained in a semialgebraic set of
controlled size and degree. To this end we will need to approximate
the entries of $ M_I $ by polynomials of controlled degree. Since
$ V $ is complex analytic in a neighborhood of $ \H_\rho^{d+1} $, $ \rho=\rho(V) $ (recall \cref{eq:H_rho}),
 and periodic in the real direction it follows that
any entry of $ M_I $ is also complex analytic in a neighborhood of
\begin{equation}\label{eq:rho'}
  \H_{\rho'}^{2d}\times \C,\ \rho'= \frac{\rho}{2(1+\max(|a|,|b|))},
\end{equation}
and periodic in the real direction for the phase variables
(we chose $ \rho' $ such that
$ \theta+t\omega\in \H_{\rho}^d $ for $ t\in I $). We can use Fourier series and Taylor series to obtain a
polynomial approximation in the phase and energy variables, but not in the frequency variables. One could use
Taylor series for the frequency variables, but only at the cost of getting different approximating polynomials on
different frequency intervals. We avoid this inconvenience by using Faber series.

We recall the basic information we will need about Faber polynomials and Faber series. We refer to
\cite[Ch. 2,3]{Sue98} for further information (see also \cite[Sec. 3.14-15]{Mar67}).
Let $ K\subset \C $ be a compact set such that
its complement is simply connected (on the Riemann sphere). Let $ \varphi_K $ be the conformal mapping of the
complement of $ K $ onto the complement of the unit disk, normalized such that
$ \varphi_K(\infty)=\infty $ and $ \varphi_K'(\infty)>0 $. Faber's polynomials, denoted by $ \Phi_{K,n} $,
$ n\ge 0 $,
are the polynomial parts of the Laurent series expansion of $ \varphi_K^n $ at $ \infty $. It is clear from the
definition that $ \Phi_{K,n} $ has degree $ n $. Given $ R>1 $ we let
$ \Gamma_{K,R}=\varphi_K^{-1}(\{ |z|=R \}) $ and
we denote by $ G_{K,R} $ the bounded domain enclosed by $ \Gamma_{K,R} $. It can be seen that
\begin{equation}\label{eq:Faber-polynomial}
  \Phi_{K,n}(z)=\frac{1}{2\pi i}\int_{\Gamma_{K,R}} \frac{\varphi^n_K(\zeta)}{\zeta-z}\,d\zeta,\ z\in G_{K,R}.
\end{equation}
If $ f $ is an analytic function on $ G_{K,R} $, then it can be expanded in a series with respect to the
Faber polynomials
\begin{equation*}
  f(z)=\sum_{n=0}^\infty a_n\Phi_{K,n}(z),\ z\in G_{K,R}
\end{equation*}
which converges absolutely and locally uniformly in $ G_{K,R} $. The Faber coefficients  are given by
\begin{equation*}
  a_n=\frac{1}{2\pi i}\int_{|t|=\rho} \frac{f(\varphi_K^{-1}(t))}{t^{n+1}}\,dt
\end{equation*}
for any $ \rho\in(1,R) $.

We are now ready to state our abstract approximation result for functions which are analytic on product
sets.

\begin{proposition}\label{prop:Faber-approximation}
  Let $ K_1,\ldots,K_m $ be compact sets in $ \C $ such that their complements are simply connected (on the
  Riemann sphere). Let $ R>1 $ and let $ f $ be an analytic function on a neighborhood of the closure of
  $ G_{K_1,R}\times\ldots\times G_{K_m,R} $.
  Given $ N\ge 0 $ and $ R'\in(1,R) $, there exists a polynomial $ P_N $ of degree at most $ N $ such that
  \begin{multline*}
    \sup \{ |f(z)-P_N(z)| : z\in K_1\times\ldots\times K_m \}\\
    \le C(m) \left( \frac{R'}{R} \right)^N
    \left( \prod_{i=1}^m \frac{\ell(\Gamma_{K_i,R'})}{d(K_i,\Gamma_{K_i,R'})} \right)
    \sup \{|f(z)|: z\in G_{K_1,R}\times\ldots\times G_{K_m,R}\},
  \end{multline*}
  where $ \ell(\Gamma_{K_i,R'}) $ denotes the length of $ \Gamma_{K_i,R'} $ and $ d(K_i,\Gamma_{K_i,R'}) $
  denotes the distance between $ K_i $ and $ \Gamma_{K_i,R'} $.
\end{proposition}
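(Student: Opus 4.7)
The plan is to expand $f$ in a multivariate Faber series and take $P_N$ to be its truncation to total degree at most $N$; since $\Phi_{K_i,n_i}$ is a polynomial of degree $n_i$, this yields a polynomial of total degree at most $N$.

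First I would establish the multivariate Faber expansion. By iterating the univariate Faber expansion one variable at a time, treating the remaining variables as parameters, one obtains, for $z=(z_1,\ldots,z_m)\in G_{K_1,R}\times\cdots\times G_{K_m,R}$,
\begin{equation*}
  f(z)=\sum_{\vec n\in\Z_{\ge 0}^m}a_{\vec n}\prod_{i=1}^m\Phi_{K_i,n_i}(z_i),
\end{equation*}
with
\begin{equation*}
  a_{\vec n}=\frac{1}{(2\pi i)^m}\oint_{|t_1|=\rho_1}\cdots\oint_{|t_m|=\rho_m}\frac{f(\varphi_{K_1}^{-1}(t_1),\ldots,\varphi_{K_m}^{-1}(t_m))}{t_1^{n_1+1}\cdots t_m^{n_m+1}}\,dt_1\cdots dt_m,
\end{equation*}
for any $\rho_i\in(1,R)$, the convergence being absolute and locally uniform. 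The hypothesis that $f$ is analytic in a neighborhood of the closure of the polydomain lets us take $\rho_i\to R$ in the coefficient formula and gives $|a_{\vec n}|\le MR^{-|\vec n|}$, where $M:=\sup\{|f(z)|:z\in G_{K_1,R}\times\cdots\times G_{K_m,R}\}$ and $|\vec n|:=n_1+\cdots+n_m$.

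Then I would set $P_N(z):=\sum_{|\vec n|\le N}a_{\vec n}\prod_i\Phi_{K_i,n_i}(z_i)$ and bound the tail on $K_1\times\cdots\times K_m$. Using \cref{eq:Faber-polynomial} with contour $\Gamma_{K_i,R'}$, we have, for $z_i\in K_i\subset G_{K_i,R'}$,
\begin{equation*}
  |\Phi_{K_i,n_i}(z_i)|\le\frac{(R')^{n_i}\,\ell(\Gamma_{K_i,R'})}{2\pi\,d(K_i,\Gamma_{K_i,R'})}.
\end{equation*}
Combining this with the coefficient bound,
\begin{equation*}
  |f(z)-P_N(z)|\le M\prod_{i=1}^m\frac{\ell(\Gamma_{K_i,R'})}{2\pi\,d(K_i,\Gamma_{K_i,R'})}\sum_{|\vec n|>N}\left(\frac{R'}{R}\right)^{|\vec n|}.
\end{equation*}
The remaining sum equals $\sum_{k>N}\binom{k+m-1}{m-1}(R'/R)^k$, a geometric series with polynomial prefactor; for $R'/R$ bounded away from $1$ this is at most $C(m)(R'/R)^N$, and in the borderline case one inserts an intermediate radius $R''\in(R',R)$ in the Faber polynomial bound and absorbs the polynomial prefactor into the resulting geometric gain to recover the stated form.

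The main obstacle is establishing the multivariate Faber expansion and its absolute/uniform convergence on the polydomain. The univariate theory from \cite{Sue98} is standard, but iterating it cleanly requires justifying at each stage that the Faber series in the current variable can be substituted into the integral defining the next Faber coefficient, which in turn relies on the local uniform convergence of the univariate expansion together with Fubini-type interchange arguments. Once the expansion is in hand, the remaining estimates are routine applications of the two integral representations and a geometric series sum.
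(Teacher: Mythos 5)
Your proposal is correct and follows essentially the same route as the paper: iterated univariate Faber expansion in each variable, the same coefficient estimate $|a_{\vec n}|\le M R^{-|\vec n|}$ from the contours $|t_i|=R$, the same bound on $\sup_{K_i}|\Phi_{K_i,n_i}|$ via \cref{eq:Faber-polynomial} over $\Gamma_{K_i,R'}$, and truncation at total degree $N$. You are in fact more explicit than the paper about the final tail sum, correctly flagging the binomial multiplicity $\binom{k+m-1}{m-1}$ of multi-indices with $|\vec n|=k$ (which the paper leaves implicit) and offering a workaround for the borderline case $R'/R\to 1$.
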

\begin{proof}
  If we take the Faber series expansion of $ f $ with respect to one of its variables, it is clear that the
  coefficients will be analytic with respect to the other variables. Since the Faber series converges absolutely
  we obtain through iteration the following expansion for $ f $ on
  $ G_{K_1,R}\times\ldots\times G_{K_m,R} $
  \begin{equation*}
    f(z_1,\ldots,z_m)= \sum_{ n} a_{n} \Phi_{K_1,n_1}(z_1)\ldots \Phi_{K_m,n_m}(z_m),
  \end{equation*}
  with the coefficients given by
  \begin{equation*}
    a_n= \frac{1}{(2\pi i)^m}
    \int_{|t_m|=R}\ldots \int_{|t_1|=R}
    \frac{f(\varphi_{K_1}^{-1}(t_1),\ldots,\varphi_{K_m}^{-1}(t_m))}
    {t_1^{n_1+1}\ldots t_m^{n_m+1}}\,dt_1\ldots dt_m.
  \end{equation*}
  Note that we have
  \begin{equation*}
    |a_n|\le \frac{1}{(2\pi)^m R^{|n|}} \sup \{|f(z)|: z\in G_{K_1,R}\times\ldots\times G_{K_m,R}\},
  \end{equation*}
  where $ |n|=n_1+\ldots+n_m $. Also, from \cref{eq:Faber-polynomial} (with $ R' $ instead of $ R $) it follows
  that
  \begin{equation*}
    \sup \{|\Phi_{K_i,n_i}(z)| : z\in K_i\} \le
    \frac{(R')^{n_i} \ell(\Gamma_{K_i,R'})}{2\pi d(K_i,\Gamma_{K_i,R'})}.
  \end{equation*}
  Therefore the conclusion holds by taking
  \begin{equation*}
    P_N(z_1,\ldots,z_m)=\sum_{ |n|\le N} a_{n} \Phi_{K_1,n_1}(z_1)\ldots \Phi_{K_m,n_m}(z_m).
  \end{equation*}
\end{proof}

\begin{remark}
  The previous proposition is a more explicit version of the direct statement of the so called Bernstein-Walsh
  theorem. Such results are also known for functions which are not necessarily defined on a product set,
  see \cite{Sic81}, \cite{Lev06}, but their statements are not explicit enough for our purposes. 
\end{remark}

\begin{lemma}\label{lem:approximation}
  Let $ I=[a,b] $, $ T\ge 0 $, $ [E',E'']\subset \R $. There exists a
  polynomial $ P_I(\theta,\omega,E) $ of degree less than
  \begin{equation}\label{eq:degree-bound}
    C [(1+\max(|a|,|b|)) (1+|I|)(1+T)]^2,\ C=C(V,d,E',E'') 
  \end{equation}
  such that
  \begin{equation*}
    \left|\log\norm{M_I(\theta+t\omega,\omega,E)}-\frac{1}{2}\log|P_I(\theta+t\omega,\omega,E)|\right|\lesssim 1, 
  \end{equation*}
  for any $ (\theta,\omega,E)\in \T^d\times\T^d\times [E',E''] $ and
  $ |t|\le T $.
\end{lemma}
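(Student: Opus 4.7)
The plan is to reduce the claim to approximating $f_I := u_a(b)^2 + u_a'(b)^2 + v_a(b)^2 + v_a'(b)^2 = \hsnorm{M_I}^2$ by a polynomial and to invoke \cref{prop:Faber-approximation}. First I would observe that $\norm{M_I}^2 \le f_I \le 2\norm{M_I}^2$ on the real domain, so $\left|\log\norm{M_I} - \tfrac12 \log f_I\right| \le \tfrac12 \log 2$. Therefore it is enough to find a polynomial $P_I$ of appropriate degree with $|f_I - P_I| \le 1/2$ uniformly on the real set $\{(\theta+t\omega,\omega,E) : (\theta,\omega,E) \in \T^d \times \T^d \times [E', E''],\ |t|\le T\}$. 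Because $\det M_I = 1$ forces $\hsnorm{M_I} \ge \sqrt{2}$, that is, $f_I \ge 2$, we would then have $|P_I| \ge 3/2$ on the real set, and $\left|\tfrac12\log f_I - \tfrac12\log|P_I|\right| \lesssim 1$ follows from the elementary estimate $|\log(1+x)|\lesssim |x|$ for $|x|\le 1/3$.

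Next I would set up the analytic framework needed for \cref{prop:Faber-approximation}. The entries of $M_I$, and hence $f_I$, are complex analytic on $\H_{\rho'}^{2d} \times \C$ as noted just before \cref{eq:rho'}, and a Gr\"onwall argument as in \cref{eq:transfer-matrix-bound}, now with complex coefficients, yields $\sup|f_I|\le \exp(C(V,E',E'')|I|)$ on any bounded complex enlargement fitting inside this domain. I would then take $K_\psi = [-(1+T), 1+T]^d$ for the phase slot (since $\psi = \theta+t\omega$ ranges in this real box), $K_\omega = [0,1]^d$ for the frequency, and $K_E = [E', E'']$ for the energy; each is an image of a product of real intervals whose complement in the Riemann sphere is simply connected, so the Faber setup applies. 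The enlargements $\Gamma_{K_i,R}$ are the standard confocal ellipses. Requiring the $\psi$-ellipse to sit inside the strip $|\Im\psi_j|\le \rho'$ forces $R-1 \sim \rho/((1+T)(1+\max(|a|,|b|)))$; the $\omega$-ellipse forces $R-1 \sim \rho/\max(|a|,|b|)$; and in $E$ the function is entire, so $R$ can be of order one. Choosing the most restrictive $R$ globally and plugging into \cref{prop:Faber-approximation}, the error is bounded by $(R'/R)^N\exp(C|I|)$ up to polynomial factors in $1/(R-1)$; making this $\le 1/2$ requires $N$ of linear order in $(1+\max(|a|,|b|))(1+|I|)(1+T)$, comfortably inside the quadratic budget of \cref{eq:degree-bound}.

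The main obstacle is that \cref{prop:Faber-approximation} forces a single $R$ across all $2d+1$ variables, whereas the natural analyticity widths differ dramatically between the phase, frequency, and energy directions. I would handle this simply by taking the smallest admissible $R$ (that of the phase variable) and absorbing the loss into the slack provided by the squared bound in \cref{eq:degree-bound}; alternatively one can linearly rescale each variable so that $K_i$ becomes the unit interval and the enlargement thicknesses are comparable, which preserves polynomial degree. A routine check is that the geometric factors $\ell(\Gamma_{K_i,R'})/d(K_i,\Gamma_{K_i,R'})$ in \cref{prop:Faber-approximation} grow only polynomially in $1/(R-1)$ for confocal ellipse enlargements of real intervals, so they are easily absorbed by a mild increase in $N$.
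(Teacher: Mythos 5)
Your proposal is correct and follows essentially the same route as the paper's proof: both invoke \cref{prop:Faber-approximation} on the product of intervals $[-(1+T),1+T]^d\times[-1,1]^d\times[E',E'']$ via the Zhukowsky transform, with a single $R-1\sim\rho'/(1+T)$ dictated by the analyticity strip from \cref{eq:rho'}, and both use $\det M_I=1$ to control the logarithm. The only (cosmetic) difference is that the paper approximates each of the four entries of $M_I$ separately and then sets $P_I=\hsnorm{\tilde M_I}^2$, whereas you approximate the single scalar $f_I=\hsnorm{M_I}^2$ directly and use $f_I\ge 2$ to pass from $|f_I-P_I|\le 1/2$ to the logarithmic bound — both are fine and give the stated degree bound with room to spare.
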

\begin{proof}
  Let $ f(\theta,\omega,E) $ denote one of the entries of
  $ M_I(\theta,\omega,E) $. We already noted that $ f $ is analytic on
  $ \H_{\rho'}^{2d}\times \C $ (see \cref{eq:rho'}). Let $ K_i=[-L_i,L_i] $, where 
  \begin{equation*}
    L_i=\begin{cases}
      1+T &, i=1,\ldots, d\\
      1 &, i=d+1,\ldots,2d\\
      \max(|E'|,|E''|) &, i=2d+1 
    \end{cases}.
  \end{equation*}
  We want to apply \cref{prop:Faber-approximation} to approximate $ f $ by a polynomial on $ \prod_i K_i $.
  The mappings needed for \cref{prop:Faber-approximation} are scaled versions of the Zhukowsky transform:
  \begin{equation*}
    \varphi_{K_i}(z)= \frac{z}{L_i}+ \sqrt{\left( \frac{z}{L_i} \right)^2-1},\quad
    \varphi_{K_i}^{-1}(w)=\frac{L_i}{2}\left( w+\frac{1}{w} \right).
  \end{equation*}
  If we let $ R=1+\epsilon $, with $ \epsilon\ll \rho'/(\max_i L_i) $, then
  $ \prod_i G_{K_i,R} \subset  \H_{\rho'}^{2d}\times \C $. We choose $ R'=(1+R)/2 $. It is elementary to see
  that
  \begin{equation*}
    \ell(\Gamma_{K_i,R'})\le \pi L_i \left( R'+ \frac{1}{R'} \right),\quad
    d(\Gamma_{K_i,R'}, K_i)=\frac{L_i}{2} \left( R'+\frac{1}{R'}-2 \right).
  \end{equation*}
  By \cref{prop:Faber-approximation}, for any $ N\ge 0 $,
  there exists a polynomial $ P_N $ of degree less than $ N $ such that
  on $ \prod_i K_i $ we have
  \begin{multline*}
	|f-P_N|\le C(d) \left( \frac{R'}{R} \right)^N
    \left( \frac{(R')^2+1}{(R'-1)^2} \right)^{2d+1} \norm{f}_\infty\\
    \le C\exp(-cN\epsilon)\exp(-2(2d+1)\log \epsilon)\exp(C|I|),
  \end{multline*}
  provided $ \epsilon\ll 1 $. Clearly, if we choose $ N $ as in \cref{eq:degree-bound} we get that
  $ |f-P_N|\le \exp(-cN\epsilon/2) $.
  
  By approximating each entry of $ M_I $ in this way we obtain a $ 2\times 2 $ matrix with polynomial entries with
  the desired degree bounds and such that
  \begin{equation*}
    \norm{M_I(\theta+t\omega,\omega,E)-\tilde M_I(\theta+t\omega,\omega,E)}\ll 1
  \end{equation*}
  for any $ (\theta,\omega,E)\in \T^d\times\T^d\times [E',E''] $ and
  $ |t|\le T $. Therefore, the conclusion holds with
  $ P_I=\hsnorm{\tilde M_I}^2 $.
  
\end{proof}

We will also need a way to approximate the Lyapunov exponent
$ L_I(\omega,E) $. We use the same strategy of averaging over the
phase shifts of $ \log\norm{M_I} $ as in \cite[Lem. 9.1]{BouGol00},
but we give a different proof.  We base our proof on the following
fact.

\begin{lemma}[{\cite[Cor. 9.7]{Bou05}}]\label{lem:sublinear-count}
  Let $ \cS\subset [0,1]^d $ be semialgebraic of degree $ B $ and
  $ \mes \cS<\eta $. Let $ N $ be an integer such that
  \begin{equation*}
    \log B \ll \log N < \log \frac{1}{\eta}.
  \end{equation*}
  Then, for any $ \theta_0\in\T^d $, $ \omega\in\DC_N $
  \begin{equation*}
    \# \{ n=1,\ldots,N : \theta_0+n\omega\in \cS (\!\!\!\!\!\mod 1)  \}<N^{1-\delta}
  \end{equation*}
  for some $ \delta=\delta(\DC) $.
\end{lemma}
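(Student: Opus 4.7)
The plan is to combine a quantitative geometric statement about semialgebraic sets of small measure with a Weyl-style equidistribution estimate for the arithmetic progression $(\theta_0+n\omega)_{n\le N}$. The Diophantine hypothesis is used only through its finite version $\omega\in\DC_N$, which is all one can access anyway once one truncates a Fourier expansion at frequencies $|k|\le N$.

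First I would invoke a quantitative cell decomposition for semialgebraic sets: any $\cS\subset [0,1]^d$ of degree at most $B$ admits a cylindrical algebraic decomposition into at most $B^{C_d}$ cells, each semialgebraically diffeomorphic to a product of intervals. Combining this with a Yomdin--Gromov type parameterization (or with Bezout-type degree bounds on successive coordinate projections of $\cS$), one obtains a cover of $\cS$ by at most $B^{C_d'}$ axis-parallel boxes $Q_\alpha=\prod_{i=1}^d[a_{\alpha,i},a_{\alpha,i}+r_{\alpha,i}]$ whose total measure satisfies $\sum_\alpha \mes(Q_\alpha)\lesssim B^{C_d'}\mes(\cS)<B^{C_d'}\eta$; that is, the cover is ``efficient'' up to a polynomial-in-$B$ loss.

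Second, for each box $Q_\alpha$ of measure $\mu_\alpha$, I would bound
\[
\#\{1\le n\le N:\theta_0+n\omega\in Q_\alpha\pmod 1\}\le N\mu_\alpha+CN\,D_N,
\]
where $D_N$ is the discrepancy of $(\theta_0+n\omega)_{n\le N}$ in $\T^d$. Erd\H{o}s--Tur\'an--Koksma expresses $D_N$ in terms of linear exponential sums $\frac{1}{N}\sum_{n=1}^N e(k\cdot(\theta_0+n\omega))$ for $0<|k|\le K$, whose absolute values are $\le 1/(N\|k\cdot\omega\|)$. Choosing $K$ to be a small power of $N$, the bound $\|k\cdot\omega\|\ge c|k|^{-A}$ for $|k|\le N$ (available because $\omega\in\DC_N$) gives $D_N\lesssim N^{-\delta'}$ for some $\delta'=\delta'(A,d)>0$. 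Summing over the $\le B^{C_d'}$ boxes yields a total count
\[
\lesssim B^{C_d'}\bigl(N\eta+N^{1-\delta'}\bigr),
\]
and the assumptions $\log N<\log(1/\eta)$ (so $N\eta<1$) and $\log B\ll\log N$ (so $B^{C_d'}=N^{o(1)}$) collapse this to the claimed $N^{1-\delta}$ with $\delta=\delta(\DC)<\delta'$.

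The main obstacle is the geometric first step: producing a box cover whose total volume is commensurate with $\mes(\cS)$ rather than merely with $B^{C_d'}$. A bare cylindrical decomposition yields the right number of cells but no direct volume control; extracting the volume control requires the quantitative semialgebraic machinery of Chap.~9 of Bourgain's book, in particular Yomdin--Gromov type parameterizations together with degree bounds on projections of $\cS$. Once that geometric input is granted, the Diophantine/equidistribution step is a routine application of Erd\H{o}s--Tur\'an--Koksma that uses only the finite condition $\omega\in\DC_N$, which is precisely what matches the strength of the hypothesis.
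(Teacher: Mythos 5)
Your overall strategy---cover $\cS$ by boxes using semialgebraic degree control, then count orbit points box by box via discrepancy and Erd\H{o}s--Tur\'an--Koksma under $\omega\in\DC_N$---is exactly the shape of the argument behind \cite[Cor.~9.7]{Bou05}, which the paper cites without reproducing. The equidistribution half of your write-up (Koksma inequality, the finite Diophantine condition giving $D_N\lesssim N^{-\delta'}$, and the observations that $N\eta<1$ and $B=N^{o(1)}$ absorb the remaining factors) is correct and is what is needed.

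The geometric half, however, rests on a false claim. You assert that a semialgebraic $\cS\subset[0,1]^d$ of degree $B$ with $\mes(\cS)<\eta$ can be covered by at most $B^{C_d'}$ axis-parallel boxes of \emph{total} measure $\lesssim B^{C_d'}\eta$. Take $d=2$ and $\cS=\{(x,y)\in[0,1]^2:\,|y-x|<\eta\}$, a diagonal strip of degree $O(1)$ and measure $\sim\eta$. If $K$ axis-parallel boxes $Q_j$ (with side lengths $r_j,s_j$) cover $\cS$, each $Q_j$ covers an $x$-interval of the diagonal of length at most $\min(r_j,s_j)$, so $\sum_j\min(r_j,s_j)\ge 1$, and by Cauchy--Schwarz the total area is $\sum_j r_js_j\ge \sum_j\min(r_j,s_j)^2\ge 1/K$. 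Thus $K=O(1)$ boxes must have total measure $\gtrsim 1/K$, which cannot be $O(\eta)$ for small $\eta$; cylindrical decomposition and Yomdin--Gromov parameterization cannot repair this because the obstruction is metric, not algebraic. What Bourgain actually proves and uses is a scale-dependent cover: if $\mes(\cS)<\epsilon^d$ then $\cS$ is covered by at most $B^{C}\epsilon^{1-d}$ cubes of side $\epsilon$, a number that grows with $1/\epsilon$, not only with $B$. Plugging this into your discrepancy step gives a count $\lesssim B^{C}\epsilon^{1-d}\bigl(N\epsilon^d+N^{1-\delta'}\bigr)$; choosing $\epsilon$ so that $\epsilon^d\simeq N^{-\delta'}$ (admissible since $\eta<1/N<N^{-\delta'}$) yields $\lesssim B^{C}N^{1-\delta'/d}$, and $B=N^{o(1)}$ finishes, with $\delta$ now picking up the extra $1/d$ loss. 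So the outline is salvageable, but only after replacing the ``$B^{C}$ boxes of total volume comparable to $\mes(\cS)$'' step with the correct $\epsilon$-cube covering lemma.
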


In order to apply \cref{lem:sublinear-count} we will need a
semialgebraic approximation of the set where the large deviations
estimate fails, but only in the phase variable.
\begin{lemma}\label{lem:semi-algebraic-phase}
  Let $ I=[a,b] $, $ \omega\in \T^d $, $ E\in [E',E''] $, and
  \begin{equation*}
    \cB_I(H)=\cB_I(H,\omega,E) := \{ \theta\in \T^d :
    \log\norm{M_I(\theta,\omega,E)}\le |I|L_I(\omega,E)-H \}.
  \end{equation*}
  There exists a semialgebraic set $ \cS_I(H)=\cS_I(H,\omega,E) $ of
  degree less than
  \begin{equation*}
    C(1+\max(|a|,|b|))(1+|I|),\ C=C(V,d,E',E'')
  \end{equation*}
  such that
  \begin{equation*}
    \cB_I(H)\subset \cS_I(H) \subset \cB_I(H/2),
  \end{equation*}
  provided $ H\gg 1 $.
\end{lemma}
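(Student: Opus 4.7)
The plan is to obtain $\cS_I(H)$ as the sublevel set of a single polynomial approximant to $\|M_I(\cdot,\omega,E)\|^{2}$. First I would invoke Lemma \ref{lem:approximation} with $T=0$, which furnishes a polynomial $P_I(\theta,\omega,E)$, of degree controlled by \eqref{eq:degree-bound}, satisfying
\begin{equation*}
  \bigl|\log\|M_I(\theta,\omega,E)\|-\tfrac{1}{2}\log|P_I(\theta,\omega,E)|\bigr|\le C_0
\end{equation*}
uniformly for $(\theta,\omega,E)\in\T^d\times\T^d\times[E',E'']$, with $C_0=C_0(V,d,E',E'')$ an absolute constant. Since $\omega$ and $E$ are fixed here, I set $Q(\theta):=P_I(\theta,\omega,E)$ and regard it as a polynomial in the real variable $\theta\in\T^d$ of the same degree bound.

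Next I would define
\begin{equation*}
  \cS_I(H):=\bigl\{\theta\in\T^d\;:\;|Q(\theta)|^{2}\le\exp\bigl(4|I|L_I(\omega,E)-3H\bigr)\bigr\}.
\end{equation*}
Because $\theta$ is real, $|Q(\theta)|^{2}=\Re(Q)^{2}+\Im(Q)^{2}$ is a single polynomial in $\theta$ with real coefficients of degree at most $2\deg Q$, so $\cS_I(H)$ is semialgebraic, with degree controlled (up to constants depending only on $V,d,E',E''$) by the bound on $\deg Q$ coming from \eqref{eq:degree-bound}.

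The required inclusions are then immediate from the approximation. If $\theta\in\cB_I(H)$ then $\log\|M_I\|\le|I|L_I-H$, hence $\tfrac{1}{2}\log|Q|\le|I|L_I-H+C_0$, so $|Q|^{2}\le\exp(4|I|L_I-4H+4C_0)\le\exp(4|I|L_I-3H)$ provided $H\ge 4C_0$; that is, $\theta\in\cS_I(H)$. Conversely, if $\theta\in\cS_I(H)$ then $\log\|M_I\|\le\tfrac{1}{2}\log|Q|+C_0\le|I|L_I-3H/4+C_0\le|I|L_I-H/2$ once $H\ge 4C_0$, which holds under the assumption $H\gg 1$; hence $\theta\in\cB_I(H/2)$.

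Nothing in this argument is hard given Lemma \ref{lem:approximation}: the only point requiring care is the bookkeeping of the semialgebraic degree, since the paper's convention counts both the number of defining polynomials and their degree. Using a single polynomial inequality as above keeps the multiplicative factor from the number of polynomials trivial, so the semialgebraic degree is governed directly by $\deg Q$ and the claimed bound follows from \eqref{eq:degree-bound}.
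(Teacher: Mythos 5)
Your proposal is correct and follows essentially the same route as the paper: take the polynomial approximant $P_I$ from \cref{lem:approximation} and define $\cS_I(H)$ as the corresponding sublevel set, with the inclusions immediate from the $O(1)$ approximation error. The only cosmetic differences are that the paper uses $T=1$ rather than $T=0$, and that since the $P_I$ constructed in \cref{lem:approximation} is $\hsnorm{\tilde M_I}^2$, already a nonnegative real-valued polynomial on real inputs, one may compare $P_I$ directly to $\exp(2|I|L_I-2H+2C_0)$ rather than forming $|Q|^2=\Re(Q)^2+\Im(Q)^2$, which avoids an extra factor of two in the degree but has no bearing on the argument.
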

\begin{proof}
  Let $ P_I $ be the polynomial from \cref{lem:approximation} with
  $ T=1 $. Then
  \begin{equation*}
    \left|\log\norm{M_I(\theta,\omega,E)}-\frac{1}{2}\log|P_I(\theta,\omega,E)|\right|\le C_0
  \end{equation*}
  and the conclusion follows by taking
  \begin{equation*}
    \cS_I(H)= \left\{ \theta:  \frac{1}{2}\log|P_I(\theta,\omega,E)|\le |I|L_I(\omega,E)-H+C_0 \right\}.
  \end{equation*}
\end{proof}

Now we can prove the estimate that will let us approximate $ L_I $.

\begin{lemma}\label{lem:averaging}
  Let $ I=[a,b] $, $ \omega\in \DC_{|I|} $, $ E\in [E',E''] $ such that $ L_I(\omega,E)\ge \gamma>0 $.  If
  $ |I|\ge C(V,d,\DC,E',E'') $ then
  \begin{equation*}
    \left| \frac{1}{N}\sum_{n=1}^N\log \norm{M_I(\theta+n\omega,\omega,E)}-|I|L_I(\omega,E) \right|
    \le C |I|^{1-\sigma},\  C=C(V,d,\DC,E',E'',\gamma)
  \end{equation*}
  for any integer $ N $ such that
  \begin{equation*}
    C(V,d,\DC,E',E'')\log \max(|a|,|b|,|I|)\le \log N < c(V,d,\DC,E',E'')|I|^{1-\sigma}.
  \end{equation*}
\end{lemma}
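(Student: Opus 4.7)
The strategy is the classical combination of the large deviations estimate with a sublinear-count bound for semialgebraic sets, via Lemmas~\ref{lem:semi-algebraic-phase} and~\ref{lem:sublinear-count}.

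First I would apply Lemma~\ref{lem:semi-algebraic-phase} with $H = C_0 |I|^{1-\sigma}$, where $C_0=C_0(V,d,\DC,E',E'',\gamma)$ is chosen large enough that Theorem~\ref{thm:ldt} gives
\begin{equation*}
  \mes\cB_I(H/2) \le C\exp(-c|I|^{\sigma}) =: \eta.
\end{equation*}
This produces a semialgebraic set $\cS_I = \cS_I(H,\omega,E)\subset\T^d$ with $\cB_I(H)\subset \cS_I\subset \cB_I(H/2)$, of degree $B \lesssim (1+\max(|a|,|b|))(1+|I|)$ and measure at most $\eta$.

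Next I would invoke Lemma~\ref{lem:sublinear-count} applied to $\cS_I$. The assumption $C\log\max(|a|,|b|,|I|)\le \log N$ forces $\log B \ll \log N$, and the upper bound $\log N < c|I|^{1-\sigma}$ (with $c$ chosen small enough relative to the constant in the definition of $\eta$) forces $\log N < \log(1/\eta)$. Consequently, for any $\theta \in \T^d$ and our $\omega$,
\begin{equation*}
  \#\{n=1,\ldots,N : \theta + n\omega \in \cS_I \pmod 1\} \le N^{1-\delta}
\end{equation*}
for some $\delta=\delta(\DC)>0$.

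I would then split the sum according to whether $n$ is ``good'' ($\theta+n\omega \notin \cS_I$) or ``bad''. For good $n$, the inclusion $\cS_I\supset \cB_I(H)$ yields $\log\norm{M_I(\theta+n\omega,\omega,E)} \ge |I|L_I(\omega,E) - H$, while Proposition~\ref{prop:uniform-upper-bound} (applied with no complex perturbation) gives the matching upper bound $\log\norm{M_I(\theta+n\omega,\omega,E)} \le |I|L_I(\omega,E) + C|I|^{1-\sigma}$; hence each good term lies within $C|I|^{1-\sigma}$ of $|I|L_I(\omega,E)$. For the at most $N^{1-\delta}$ bad $n$ I would use the trivial two-sided bound $0 \le \log\norm{M_I} \le C(V,|E|)|I|$, the lower bound following from $\det M_I = 1$ (so $\norm{M_I}\ge 1$) and the upper from \cref{eq:transfer-matrix-bound}. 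Averaging yields
\begin{equation*}
  \left|\frac{1}{N}\sum_{n=1}^N\log\norm{M_I(\theta+n\omega,\omega,E)} - |I|L_I(\omega,E)\right| \lesssim |I|^{1-\sigma} + N^{-\delta}|I|,
\end{equation*}
and the second term is absorbed into the first thanks to the lower bound $\log N \gtrsim \log\max(|a|,|b|,|I|)$, which guarantees $N^{\delta}\ge |I|^{\sigma}$ once the constant in that lower bound is taken large enough.

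The main technical obstacle is the calibration of the exponents and constants: one must choose $H$, and the constants $C,c$ in the stated range of $N$, so that the sublinear-count hypothesis $\log N < \log(1/\eta)$ is genuinely implied by $\log N < c|I|^{1-\sigma}$, and simultaneously so that the loss $N^{-\delta}|I|$ from the bad indices is dominated by $|I|^{1-\sigma}$. Once these parameters are balanced, the split-and-bound argument runs routinely using only results already proven in the preceding sections.
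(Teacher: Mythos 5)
Your proof is correct and essentially identical to the paper's: both approximate the bad phase set via Lemma~\ref{lem:semi-algebraic-phase}, invoke the sublinear-count Lemma~\ref{lem:sublinear-count}, and split the Birkhoff sum into good indices (controlled from below by the large deviations bound and from above by Proposition~\ref{prop:uniform-upper-bound}) and the at most $N^{1-\delta}$ bad indices (controlled by the trivial two-sided bound), balancing the loss $N^{-\delta}|I|$ against $|I|^{1-\sigma}$ via the lower bound on $\log N$. The only cosmetic difference is that the paper bounds the bad terms above by $|I|L_I(\omega,E)+C|I|^{1-\sigma}$ rather than by $C(V,|E|)|I|$, which changes nothing in the argument.
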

\begin{proof}
  Let
  \begin{equation*}
    \cB= \{ \theta\in \T^d : \log \norm{M_I(\theta,\omega,E)} \le |I|L_I(\omega,E) - |I|^{1-\sigma}  \}.
  \end{equation*}
  By Lemma \ref{lem:semi-algebraic-phase} there exists a
  semi-algebraic set $ \cS $ such that
  \begin{equation*}
    \cB\subset \cS,\quad \deg \cS \le C\max(|a|,|b|)|I|, \quad \mes \cS \le \exp(-c|I|^{1-\sigma}),
  \end{equation*}
  provided $ |I|\ge C(V,d,\DC,E',E'') $.

  For any $ \theta\in \T^d \setminus \cS $ we have 
  \begin{equation*}
    |I|L_I(\omega,E)-|I|^{1-\sigma}\le \log \norm{M_I(\theta,\omega,E)} \le |I|L_I(\omega,E)+C|I|^{1-\sigma},
  \end{equation*}
  (recall the upper bound from \cref{prop:uniform-upper-bound})
  whereas for $ \theta\in \cS $ we have
  \begin{equation*}
    0\le \log \norm{M_I(\theta,\omega,E)} \le |I|L_I(\omega,E)+C|I|^{1-\sigma}.
  \end{equation*}
  From the above and Lemma \ref{lem:sublinear-count} we get
  \begin{multline*}
    C|I|^{1-\sigma}
    \ge \frac{1}{N} \sum_{n=1}^N \log \norm{M_I(\theta+n\omega,\omega,E)}-|I|L_I(\omega,E) \\
    \ge \frac{N-N^{1-\delta}}{N}(-|I|^{1-\sigma})+
    \frac{N^{1-\delta}}{N}(-|I|L_I(\omega,E)) \ge -2|I|^{1-\sigma}
  \end{multline*}
  provided
  \begin{equation*}
    C(V,d,\DC,E',E'')\log \max(|a|,|b|,|I|)\le \log N < c(V,d,\DC,E',E'')|I|^{1-\sigma}.
  \end{equation*}
  This concludes the proof.
\end{proof}

\begin{lemma}\label{lem:semi-algebraic-all}
  Let $ I=[a,b] $, $ [E',E'']\subset\R $, $ \gamma>0 $.  If
  $ |I|\ge C(V,d,\DC,E',E'',\gamma) $ and
  \begin{gather*}
    T=(\max(|a|,|b|,|I|))^C,\ C=C(V,d,\DC,E',E'')\\
    \begin{aligned}
      \cB_I(H,\gamma):= \{ (\theta,\omega,E)\in \T^d\times \DC_T\times
      [E',E''] &:&
      \log\norm{M_I(\theta,\omega,E)}\le |I|L_I(\omega,E)-H,\\
      & & L_I(\omega,E)\ge \gamma \}
    \end{aligned}
  \end{gather*}
  then there exists a semi-algebraic set $ \cS_I=\cS_I(H,\gamma) $ of
  degree less than $ T^{C(d)} $ such that
  \begin{equation*}
    \cB_I(H,\gamma)\subset \cS_I(H,\gamma) \subset \cB_I(H/2,\gamma/2),
  \end{equation*}
  provided
  \begin{equation*}
    H\ge C|I|^{1-\sigma},\ C=C(V,d,\DC,E',E'').
  \end{equation*}
\end{lemma}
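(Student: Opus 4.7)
The strategy is to translate each of the two conditions defining $\cB_I(H,\gamma)$ into polynomial inequalities. For the deviation condition $\log\norm{M_I(\theta,\omega,E)}\le |I|L_I(\omega,E)-H$ I would approximate $\log\norm{M_I}$ by $\tfrac{1}{2}\log|P_I|$ via \cref{lem:approximation}, and $|I|L_I(\omega,E)$ by the orbit average $\tfrac{1}{N}\sum_{n=1}^N\log\norm{M_I(\theta+n\omega,\omega,E)}$ via \cref{lem:averaging}; for the positivity condition $L_I(\omega,E)\ge\gamma$ I would use the same orbit-average proxy. This mirrors the discrete-case construction of \cite{BouGol00}.

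Concretely, I would fix $N=\max(|a|,|b|,|I|)^C$ with $C=C(V,d,\DC,E',E'')$ so that $\log N$ lies in the admissible range of \cref{lem:averaging}, and set $T=N$, so that \cref{lem:approximation} produces a polynomial $P_I(\theta,\omega,E)$ of degree $\lesssim T^{O(1)}$ satisfying
\begin{equation*}
\bigl|\log\norm{M_I(\theta+n\omega,\omega,E)}-\tfrac{1}{2}\log|P_I(\theta+n\omega,\omega,E)|\bigr|\lesssim 1
\end{equation*}
uniformly for $n=0,\ldots,N$ and $(\theta,\omega,E)\in\T^d\times\T^d\times[E',E'']$. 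Then I would let $\cS_I(H,\gamma)$ be the intersection of $\T^d\times\DC_T\times[E',E'']$ (itself semialgebraic of degree $\lesssim T^{O(d)}$) with the set cut out by the two polynomial inequalities
\begin{equation*}
P_I(\theta,\omega,E)^N \, e^{N(3H/2-C_1|I|^{1-\sigma})} \le \prod_{n=1}^N P_I(\theta+n\omega,\omega,E),
\end{equation*}
\begin{equation*}
\prod_{n=1}^N P_I(\theta+n\omega,\omega,E) \ge e^{N(|I|\gamma-C_2|I|^{1-\sigma})}.
\end{equation*}
Each has total degree $N\cdot\deg P_I\le T^{O(d)}$ in $(\theta,\omega,E)$, yielding the required overall bound $T^{C(d)}$.

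The forward inclusion $\cB_I(H,\gamma)\subset\cS_I(H,\gamma)$ is immediate: since $L_I\ge\gamma$, \cref{lem:averaging} gives that the orbit average equals $|I|L_I+O(|I|^{1-\sigma})$, and combining with $\log\norm{M_I}\le|I|L_I-H$ and the pointwise polynomial approximation, both inequalities hold once the $O(|I|^{1-\sigma})$ errors are absorbed using $H\ge C|I|^{1-\sigma}$. The reverse inclusion is where the main obstacle lies. Unwinding the approximations, the polynomial inequalities give
\begin{equation*}
\log\norm{M_I(\theta,\omega,E)}\le\tfrac{1}{N}\sum\nolimits_{n=1}^N\log\norm{M_I(\theta+n\omega,\omega,E)}-3H/4+O(|I|^{1-\sigma})
\end{equation*}
and $\tfrac{1}{N}\sum\log\norm{M_I(\theta+n\omega,\omega,E)}\ge|I|\gamma-O(|I|^{1-\sigma})$; to recover membership in $\cB_I(H/2,\gamma/2)$ I would argue by contradiction, showing that $L_I<\gamma/2$ is incompatible with the positivity inequality via the one-sided upper bound on the orbit average supplied by \cref{prop:uniform-upper-bound} (which forces orbit average $\le|I|L_I+O(|I|^{1-\sigma})$ when applied at threshold $\gamma/2$). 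Tracking all the degree and parameter dependencies, and choosing the slack constants $C_1,C_2$ so that both inclusions go through simultaneously, is the most delicate bookkeeping step.
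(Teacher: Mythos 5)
Your overall strategy matches the paper's: approximate $\log\norm{M_I}$ by $\tfrac12\log|P_I|$ via \cref{lem:approximation}, replace $|I|L_I(\omega,E)$ by the orbit average via \cref{lem:averaging}, and cut out $\cS_I$ by two polynomial inequalities on the product of $\T^d\times\DC_T\times[E',E'']$. You also correctly identify that the delicate point is the reverse inclusion $\cS_I\subset\cB_I(H/2,\gamma/2)$, in particular deducing $L_I(\omega,E)\ge\gamma/2$ from the orbit-average inequality.

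However, your proposed way to close that gap is circular. You want to bound the orbit average above by $|I|L_I+O(|I|^{1-\sigma})$ by invoking \cref{prop:uniform-upper-bound} ``applied at threshold $\gamma/2$.'' But \cref{prop:uniform-upper-bound} (and \cref{lem:averaging}, whose proof relies on it) assumes a positive lower bound $L_I(\omega_0,E_0)\ge\gamma_0>0$, and the constant in the conclusion depends on $\gamma_0$, degenerating as $\gamma_0\to 0$. In the reverse inclusion you have no a priori positive lower bound on $L_I(\omega,E)$; if $L_I$ is small there is no fixed threshold at which \cref{prop:uniform-upper-bound} becomes applicable, so the contradiction you aim for never materializes. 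The correct route is to obtain the orbit-average upper bound directly from the two-sided large deviations estimate (\cref{thm:ldt}), which requires no positivity of $L_I$: take a semialgebraic superset (an upper-deviation twin of \cref{lem:semi-algebraic-phase}) of $\{\theta:\log\norm{M_I(\theta)}\ge|I|L_I+|I|^{1-\sigma}\}$, apply the sublinear count from \cref{lem:sublinear-count}, and use the trivial bounds $0\le\log\norm{M_I}\le C|I|$ together with $L_I\ge 0$. This yields $\tfrac{1}{N}\sum_n\log\norm{M_I(\theta+n\omega)}\le|I|L_I+O(|I|^{1-\sigma})$ for all $\omega\in\DC_N$ unconditionally, after which both parts of the reverse inclusion follow as you sketch. (The paper's own proof is terse here and cites \cref{lem:averaging} without comment on this point; the LDT-based variant without the positivity hypothesis is what actually makes the sandwich valid on both sides.)
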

\begin{proof}
  Let $ P_I $ be the polynomial from \cref{lem:approximation}. Then
  \begin{equation*}
    \left|\log\norm{M_I(\theta+t\omega,\omega,E)}-\frac{1}{2}\log|P_I(\theta+t\omega,\omega,E)|\right|\le C_0,
    \ |t|\le T
  \end{equation*}
  and the degree of $ P_I $ is less than $ T^C $.  If furthermore, the
  power in the definition of $ T $ is large enough so that we can
  apply \cref{lem:averaging} with $ N=[T] $, then the conclusion
  follows by taking $ \cS_I(H,\gamma) $ to be the set of
  $ (\theta,\omega,E)\in \T^d\times \DC_T\times [E',E''] $ such that
  \begin{gather*}
	\frac{1}{2}\log|P_I(\theta,\omega,E)|
    \le \frac{1}{N}\sum_{n=1}^N \frac{1}{2}\log|P_I(\theta+n\omega,\omega,E)|+2C_0+C|I|^{1-\sigma}-H\\
    \gamma|I|\le \frac{1}{N}\sum_{n=1}^N
    \frac{1}{2}\log|P_I(\theta+n\omega,\omega,E)|+C_0+C|I|^{1-\sigma}.
  \end{gather*}
\end{proof}

\section{Elimination of Resonances}\label{sec:elimination}

As in the discrete case, the elimination of resonances is based on the
following result.
\begin{lemma}[{\cite[Lem. 9.9]{Bou05}}]\label{lem:Bourgain-slopes}
  Let $ \cS \subset [0,1]^{2n} $ be a semialgebraic set of degree
  $ B $ and $ \mes_{2n} \cS <\eta $,
  $ \log B \ll \log \frac{1}{\eta} $. We denote
  $ (\theta,\omega)\in [0,1]^n\times [0,1]^n $ the product variable.
  Fix $ \epsilon > \eta^{\frac{1}{2n}} $. Then there is a
  decomposition
  \begin{equation*}
    \cS= \cS_1 \cup \cS_2
  \end{equation*}
  $ \cS_1 $ satisfying
  \begin{equation*}
    \mes_n(\Proj_\omega \cS_1)< B^C\epsilon
  \end{equation*}
  and $ \cS_2 $ satisfying the transversality property
  \begin{equation*}
    \mes_n(\cS_2\cap L)<B^C\epsilon^{-1}\eta^{\frac{1}{2n}}
  \end{equation*}
  for any $ n $-dimensional hyperplane $ L $ s.t.
  $ \max_{0\le j\le n-1} |\Proj_L(e_j)|<\frac{1}{100}\epsilon $ (we
  denote by $ e_0,\ldots,e_{n-1} $ the $ \omega $-coordinate vectors).
\end{lemma}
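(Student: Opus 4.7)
The plan is to reduce the problem to a union of $B^C$ smooth pieces via the Yomdin--Gromov reparametrization theorem, then separate the pieces according to how ``tilted'' they are toward the $\omega$-coordinates. Specifically, by Gromov's algebraic lemma (the $C^r$-parametrization of semialgebraic sets) we can cover $\cS$ by at most $B^{C_n}$ graphs of real-analytic maps $\Phi_j:[0,1]^{k_j}\to [0,1]^{2n}$ with $k_j\le \dim\cS\le 2n$ and all partial derivatives bounded by $1$. Since each such piece has $k_j$-dimensional Hausdorff measure $\lesssim 1$, it suffices to produce the decomposition for one such piece $\Phi$ and then sum over the $\le B^C$ pieces, absorbing this count into the $B^C$ factor in the statement.

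For a fixed piece $\Phi$, consider the composition $\pi_\omega\circ\Phi$, where $\pi_\omega:[0,1]^{2n}\to[0,1]^n$ is the projection onto the $\omega$-factor. The dichotomy is governed by the singular values of the derivative $D(\pi_\omega\circ\Phi)$ at each point: say a point is of \emph{horizontal type} if all singular values are $<\tfrac{1}{100}\epsilon$ (so the tangent space to the image is almost parallel to the $\theta$-subspace) and of \emph{transverse type} otherwise. Let $\cS_1$ be the union, over all pieces, of the images of horizontal-type points and $\cS_2$ the rest.

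For $\cS_1$: on the horizontal part the map $\pi_\omega\circ\Phi$ has small Jacobian, so by the area formula and the bound on the derivatives one gets
\begin{equation*}
  \mes_n(\pi_\omega(\Phi(\text{horizontal})))\lesssim \epsilon^{n}\cdot \mes_{k_j}([0,1]^{k_j})\le \epsilon,
\end{equation*}
and summing over pieces yields $\mes_n(\pi_\omega\cS_1)\le B^C\epsilon$, as required. For $\cS_2$: on the transverse part the derivative $D(\pi_\omega\circ\Phi)$ has at least one singular value $\ge \tfrac{1}{100}\epsilon$, so if $L$ is an $n$-plane with $\max_j|\Proj_L(e_j)|<\tfrac{1}{100}\epsilon$ (i.e.\ $L$ is nearly parallel to the $\theta$-directions), then $L$ is uniformly transverse to the tangent spaces of $\cS_2$. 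The intersection $\cS_2\cap L$ is therefore covered by pieces whose fiber volumes in the $L$-direction are controlled, and a volume comparison (using the coarea formula applied to $\pi_{L^\perp}\circ\Phi$ with the transverse lower bound $\epsilon$) gives
\begin{equation*}
  \mes_n(\cS_2\cap L)\lesssim \epsilon^{-1}\cdot\mes_{2n}(\cS)^{1/(2n)}\cdot B^C \le B^C\epsilon^{-1}\eta^{1/(2n)},
\end{equation*}
where the $\eta^{1/(2n)}$ factor comes from Chebyshev-type reasoning: a set of $2n$-measure $<\eta$ cannot be spread thickly in all directions, so any slice by an $n$-plane has measure at most $\eta^{1/(2n)}$ up to polynomial factors in $B$.

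The main obstacle is making the parameterization and measure bookkeeping uniform with the correct dependence $B^C$ and $\eta^{1/(2n)}$; the bound on the number of smooth pieces and on their derivatives from the Yomdin--Gromov theorem has the form $B^{C(n)}$, and tracing the constants through the coarea/area formula requires the hypothesis $\log B\ll \log(1/\eta)$ precisely to absorb polynomial-in-$B$ factors into the slack between $\epsilon$ and $\eta^{1/(2n)}$.
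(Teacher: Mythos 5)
The paper cites this lemma from \cite[Lem.~9.9]{Bou05} without reproducing a proof, so I am judging your argument on its own merits. Your high-level plan (Yomdin--Gromov reparametrization into $B^C$ pieces, then a ``slope'' dichotomy) matches the approach in Bourgain's book, but two of your steps fail as written. Your ``Chebyshev-type'' slicing bound --- that every $n$-plane slice of a degree-$B$ semialgebraic set of $2n$-measure $<\eta$ has $n$-measure $\lesssim B^C\eta^{1/(2n)}$ --- is false: take $\cS=[0,1]^n\times[0,\delta]^n$ with $\eta=\delta^n$; the slice by $L=[0,1]^n\times\{\omega_0\}$, which satisfies $\Proj_L e_j=0$, has $n$-measure $1\gg\eta^{1/(2n)}$. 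That set must live entirely in $\cS_1$ (its $\omega$-projection has measure $\eta<\epsilon$), so the slicing estimate is only supposed to hold \emph{after} the decomposition, and invoking it to bound $\cS_2\cap L$ is circular. The genuine input is a covering statement (a degree-$B$ semialgebraic set of measure $<\eta$ in $[0,1]^{2n}$ is covered by $\lesssim B^C\eta^{1/(2n)-1}$ balls of radius $\eta^{1/(2n)}$) together with a count of how many of those balls a near-horizontal $L$ can meet when they carry the transverse label; your write-up skips that count entirely.

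Moreover, the pointwise transversality you invoke does not exist for full-dimensional pieces ($k_j=2n$): there the tangent space of the image is all of $\R^{2n}$, so ``$L$ uniformly transverse to the tangent spaces of $\cS_2$'' is vacuous, and one singular value of $D(\pi_\omega\circ\Phi)$ being $\gtrsim\epsilon$ gives no lower bound on the $n$-dimensional Jacobian of $\pi_{L^\perp}\circ\Phi$ (the remaining $n-1$ singular values may be arbitrarily small), so the coarea inequality you appeal to does not produce the $\epsilon^{-1}$ gain. The angle hypothesis on $L$ has to be combined with the covering to control the ball count in the previous paragraph; that is the step that is genuinely missing from the proposal.
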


Our elimination result is as follows.
\begin{proposition}\label{prop:elimination-fixed-phase}
  Assume that $ L(\omega,E)\ge \gamma>0 $ for all
  $ (\omega,E)\in \DC\times [E',E''] $. Let  $ I=[a,b] $, $ J=[a',b'] $.
  If
  \begin{equation*}
    |J|\simeq |I|^A\ge \max(|a|,|b|,|a'|,|b'|),\ |I|,A\ge C(V,d,\DC,E',E'',\gamma),
  \end{equation*}
  there exists a set
  \begin{equation*}
    \Theta_J,\ \mes \Theta_J\le \exp(-c|J|^{\alpha}),\ c=c(V,d,\DC,E',E''),\alpha=\alpha(d,\DC),
  \end{equation*}
  and for each $ \theta\in \T^d\setminus \Theta_J $ and $ B>0 $ there exists a set
  \begin{equation*}
    \Omega_{I,J,\theta,B},\ \mes \Omega_{I,J,\theta,B}\le |J|^{C-B},\ C=C(V,d,\DC,E',E''),
  \end{equation*}
  such that the following holds.
  For any $ \theta\in \T^d\setminus\Theta_J $, $ \omega\in \DC\setminus\Omega_{I,J,\theta,B} $, and $ E\in[E',E'']$
  we have that if
  \begin{equation*}
    \log\norm{M_J(\theta,\omega,E)}\le |J|L(\omega,E)-|J|^{1-\sigma/2},
  \end{equation*}
  then
  \begin{equation*}
    \log\norm{M_I(\theta+n\omega,\omega,E)}>|I|L(\omega,E)-|I|^{1-\sigma}
  \end{equation*}
  for any $|J|^B\le |n|\le\exp(c|I|^\sigma)$, $c=c(V,d,\DC,E',E'',A,\gamma)$ (recall that
  $ \sigma $ is as in \cref{thm:ldt}).
\end{proposition}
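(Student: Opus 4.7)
The plan is to adapt the Bourgain--Goldstein semialgebraic elimination to the continuous setting, combining \cref{lem:semi-algebraic-all} with Bourgain's transversality lemma \cref{lem:Bourgain-slopes}, and using a continuous shift parameter $\lambda$ to package all admissible integer shifts $n$ into a single semialgebraic set. First I reduce to a discrete net in $E$: by \cref{lem:stability-rough} and \cref{cor:Lyapunov-stability}, one can cover $[E',E'']$ by a grid $\{E_k\}_{k=1}^K$ of spacing $\exp(-C(\log|J|)^{2/\sigma})$ and cardinality $K\le\exp(C(\log|J|)^{2/\sigma})$, so that replacing $E$ by the nearest grid point worsens the relevant thresholds only in a controlled way. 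For each $E_k$, apply \cref{lem:semi-algebraic-all} at two scales: with $T_J=|J|^C$ to produce $\cS_J^{(k)}$ of degree $\le|J|^{C'(d)}$ containing the ``resonance on $J$'' locus, and with $T_I=\exp(c|I|^\sigma)$ to produce $\cS_I^{(k)}$ of degree $\le\exp(C''(d)|I|^\sigma)$ governing the ``resonance on $I$'' condition at arbitrary shifts up to $T_I$; by \cref{thm:ldt} their $2d$-dimensional measures are bounded by $\exp(-c|J|^\sigma)$ and $\exp(-c|I|^\sigma)$ respectively.

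Next, for each $E_k$ form the joint set through a continuous shift parameter,
\[
\tilde{\mathcal{R}}^{(k)}=\bigl\{(\theta,\omega,\lambda) : (\theta,\omega,E_k)\in\cS_J^{(k)},\ (\theta+\lambda\omega,\omega,E_k)\in\cS_I^{(k)},\ |J|^B\le|\lambda|\le\exp(c|I|^\sigma)\bigr\},
\]
and let $\mathcal{R}^{(k)}$ be its projection onto $(\theta,\omega)$. By Tarski--Seidenberg $\mathcal{R}^{(k)}$ is semialgebraic of degree $B_{\mathrm{sa}}\le\exp(C|I|^\sigma)$, and $\mathcal{R}^{(k)}\subset\cS_J^{(k)}$ gives $\mes\mathcal{R}^{(k)}\le\exp(-c|J|^\sigma)$. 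Since every integer $n$ in the admissible range is an admissible value of $\lambda$, any failure of the proposition's conclusion forces $(\theta,\omega)\in\mathcal{R}^{(k)}$ for the nearest grid point $E_k$. With $|J|\simeq|I|^A$ and $A$ large, $\log B_{\mathrm{sa}}\le C|J|^{\sigma/A}\ll|J|^\sigma\simeq\log(1/\eta)$, so \cref{lem:Bourgain-slopes} applies with $\epsilon=\exp(-c|J|^\sigma/(4d))$, yielding $\mathcal{R}^{(k)}=\mathcal{R}^{(k)}_1\cup\mathcal{R}^{(k)}_2$ with $\mes(\Proj_\omega\mathcal{R}^{(k)}_1)\le B_{\mathrm{sa}}^C\epsilon\le\exp(-c|J|^\sigma/(5d))$ and, the transversality hypothesis being automatic for $\theta$-hyperplanes $L=\{\omega=\omega_0\}$ (since $\Proj_L(e_j)=0$ for all $\omega$-coordinate vectors), $\mes(\mathcal{R}^{(k)}_2\cap\{\omega=\omega_0\})\le B_{\mathrm{sa}}^C\epsilon^{-1}\eta^{1/(2d)}\le\exp(-c|J|^\sigma/(5d))$ for every $\omega_0$.

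Define
\[
\Omega_{I,J,\theta,B}=\bigcup_k\Proj_\omega\mathcal{R}^{(k)}_1 \cup \bigcup_k\{\omega:(\theta,\omega)\in\mathcal{R}^{(k)}_2\}, \quad \Theta_J=\Bigl\{\theta:\sum_k\mes\bigl(\mathcal{R}^{(k)}_2(\theta)\bigr)>\exp(-c|J|^\sigma/(6d))\Bigr\}.
\]
By Fubini applied to the $\theta$-slice bound, $\int\mes(\mathcal{R}^{(k)}_2(\theta))\,d\theta\le\exp(-c|J|^\sigma/(5d))$; Chebyshev combined with the sub-exponential bound on $K$ yields $\mes\Theta_J\le\exp(-c'|J|^\sigma)$, so $\alpha=\sigma$ suffices. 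For $\theta\notin\Theta_J$, both unions constituting $\Omega_{I,J,\theta,B}$ are exponentially small in $|J|^\sigma$, so in particular $\mes\Omega_{I,J,\theta,B}\le|J|^{C-B}$; unwinding the $E$-net reduction via stability propagates the conclusion to all $E\in[E',E'']$. The main obstacle is handling the exponentially many integer shifts: a naive union bound over $n$ is hopeless, so the continuous-$\lambda$ reformulation is essential, and one must verify that the resulting semialgebraic degree $\exp(C|I|^\sigma)$ (which cannot be avoided since the Faber-polynomial approximation \cref{lem:approximation} has degree quadratic in $T$) remains much smaller than $\log(1/\eta)\simeq|J|^\sigma$ -- which is precisely why the hypothesis $|J|\simeq|I|^A$ with $A$ large is needed.
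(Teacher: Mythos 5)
Your proposal diverges substantially from the paper's proof and has a genuine gap in the $E$-grid reduction. You claim that, by \cref{lem:stability-rough} and \cref{cor:Lyapunov-stability}, a grid of spacing $\exp(-C(\log|J|)^{2/\sigma})$ suffices to transfer the $J$-resonance condition to the nearest grid point. But \cref{lem:stability-rough} controls $|\log\norm{M_J(\cdot,E)}-\log\norm{M_J(\cdot,E_k)}|$ only by $e^{C|J|}|E-E_k|/\max\norm{M_J}$, which forces spacing of order $e^{-C|J|}$ (hence exponentially many grid points), not $\exp(-C(\log|J|)^{2/\sigma})$. The refined stability \cref{prop:stability-AP} does give the finer spacing, but only outside a $\theta$-exceptional set $\cB_{J,\omega,E_k,\ell}$ that depends on $\omega$ (and $E_k$); unioning over an $\omega$-net would destroy the measure bound, and Fubini does not recover a nested $(\theta,\omega,E)$-structure. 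This is exactly the difficulty that \cref{prop:Cartan-E} is engineered to overcome: Cartan's estimate produces, for $\theta\notin\Theta_J$ and $\omega\notin\Omega_{J,\theta}$, a union of only $\exp(C(\log|J|)^{2/\sigma})$ intervals of size $\exp(-c|J|^{\alpha})$ in $E$ where resonance can occur, and only these (very short) intervals need to be discretised; the paper then applies the rough stability \cref{lem:stability-rough} to $M_I$ (not $M_J$), for which the $\exp(-c|J|^\alpha)\le\exp(-c|I|^2)$ spacing is indeed fine enough. Without \cref{prop:Cartan-E} your grid reduction does not close.

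There are two further issues. First, your invocation of \cref{lem:Bourgain-slopes} via the slices $L=\{\omega=\omega_0\}$ is misdirected: those are fixed-$\omega$ slices and bound the $\theta$-measure, whereas you need to bound the $\omega$-measure for fixed $\theta$, for which the transversality hypothesis would be $|\Proj_L(e_j)|=1$, i.e. it fails. What you actually use for $\mathcal{R}^{(k)}_2$ is plain Fubini--Chebyshev, which needs no semialgebraicity at all — so the whole degree-bookkeeping for the Faber approximation becomes moot in your argument, which is a sign that the continuous-$\lambda$ packaging has discarded the structure that makes Bourgain's lemma effective. The paper instead fixes $\theta_0$, treats the shifted phase $\theta$ as a fresh variable, projects $\cS$ to $(\theta,\omega)$ (this is where \cref{prop:Cartan-E} is used to control the measure of the projection), and exploits that the maps $\omega\mapsto(\{\theta_0+n\omega\},\omega)$ trace affine hyperplanes $L_{n,\alpha}$ of slope $1/|n|<\epsilon/100$ — this is precisely the transversal family to which \cref{lem:Bourgain-slopes} applies, and the union over $n,\alpha$ is then summable because the degree of $\cS'$ is only $|J|^C$, polynomial, not $\exp(C|I|^\sigma)$. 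Second, \cref{lem:semi-algebraic-all} is stated with $T=(\max(|a|,|b|,|I|))^C$; your $T_I=\exp(c|I|^\sigma)$ would require re-deriving it and in particular checking the hypothesis $\log N<c|I|^{1-\sigma}$ of \cref{lem:averaging}, which is not done.
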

\begin{proof}
  Let $ \Theta_J $ be the set from \cref{prop:Cartan-E} with
  $ H=|J|^{\sigma/2}/C $. The measure estimate on $ \Theta_J $ holds
  with $ \alpha=\sigma/(4d+2) $.

  Fix $ \theta_0\in \T^d\setminus \Theta_J $. Consider the set $ \cB $
  of $ (\theta,\omega,E)\in \T^d\times \DC\times [E',E''] $ such that
  \begin{gather*}
    L(\omega,E)\ge \gamma,\\
	\log\norm{M_J(\theta_0,\omega,E)}\le |J|L(\omega,E)-|J|^{1-\sigma/2},\\
    \log\norm{M_I(\theta,\omega,E)}\le |I|L(\omega,E)-|I|^{1-\sigma}.
  \end{gather*}
  By \cref{prop:Lyapunov-convergence-rate} we have that
  $ \cB\subset \cB' $ where $ \cB' $ is defined by
  \begin{gather*}
    (\theta,\omega,E)\in \T^d\times \DC_{|J|^C}\times [E',E'']\\
    L_J(\omega,E)\ge \gamma/2,\\
	\log\norm{M_J(\theta_0,\omega,E)}\le |J|L_J(\omega,E)-|J|^{1-\sigma/2}/2,\\
    \log\norm{M_I(\theta,\omega,E)}\le
    |I|L_I(\omega,E)-|I|^{1-\sigma}/2.
  \end{gather*}
  By \cref{lem:semi-algebraic-all} we know that
  $ \cB' \subset \cS \subset \cB'' $ with $ \cS $ semialgebraic of
  degree less than $ |J|^C $ and $ \cB'' $ defined by
  \begin{gather*}
    (\theta,\omega,E)\in \T^d\times \DC_{|J|^C}\times [E',E'']\\
    L_J(\omega,E)\ge \gamma/4,\\
	\log\norm{M_J(\theta_0,\omega,E)}\le |J|L_J(\omega,E)-|J|^{1-\sigma/2}/4,\\
    \log\norm{M_I(\theta,\omega,E)}\le
    |I|L_I(\omega,E)-|I|^{1-\sigma}/4.
  \end{gather*}

  To get the conclusion we want
  $ (\{ \theta_0+n\omega \},\omega,E)\notin \cB $ for $ \omega $
  outside an exceptional set and all $ E\in[E',E''] $. It is enough to
  argue that
  $ (\{ \theta_0+n\omega \},\omega)\notin
  \cS':=\Proj_{(\theta,\omega)} \cS $
  for $ \omega $ outside an exceptional set.  We achieve this by
  invoking \cref{lem:Bourgain-slopes}. By the Tarski-Seidenberg
  principle (see \cite[Prop. 9.2]{Bou05}) the set $ \cS' $ is known to
  be semialgebraic of degree less than $ |J|^C $. We need to estimate
  $ \mes (\cS') $.

  We have
  \begin{equation*}
    \mes (\cS') \le \mes (\Proj_{(x,\omega)} \cB'').
  \end{equation*}
  Let $ \Omega_{J,\theta_0} $,
  $ \mes(\Omega_{J,\theta_0})\le \exp(-c|J|^\alpha) $ be the set from
  \cref{prop:Cartan-E}.  Consider the set
  \begin{equation*}
    \Theta''=\Proj_\theta \{ (\theta,\omega,E)\in \cB'': \omega\notin \Omega_{J,\theta_0}  \}.
  \end{equation*}
  If $ (\theta,\omega,E)\in \cB'' $ and
  $ \omega\notin \Omega_{J,\theta_0} $ then by \cref{prop:Cartan-E} we
  have that $ E\in \cE_{J,\theta_0,\omega} $ which is the union of
  less than $ \exp(C(\log|J|)^{2/\sigma}) $ intervals each having
  measure less than $ \exp(-c|J|^{\alpha}) $. If $ A $ is large enough
  so that $ |J|^{\alpha}\ge |I|^2 $ then \cref{thm:ldt} and
  \cref{lem:stability-rough} imply that
  \begin{equation*}
    \mes(\Theta'')\le \exp(C(\log|J|)^{2/\sigma}) \exp(-c|I|^{\sigma})\le \exp(-c(A)|I|^{\sigma}).
  \end{equation*}
  We conclude that
  \begin{equation*}
    \mes ( \cS') \le \mes (\Proj_{(x,\omega)} \cB'')
    \le \mes(\Omega_{J,\theta_0})+\mes(\Theta'')\le \exp(-c|I|^{\sigma}).
  \end{equation*}

  Let
  \begin{equation*}
    \cS'=\cS_1'\cup \cS_2'
  \end{equation*}
  be the decomposition afforded by \cref{lem:Bourgain-slopes} with
  $ \epsilon=200/|J|^B $.  The set of $ \{ \theta_0+n\omega \} $ with
  $ \omega\in [0,1]^d $ is contained in a union of hyperplanes
  $ L_{n,\alpha} $, $ \alpha\le |n|^d $. The hyperplanes
  $ L_{n,\alpha} $ are parallel to the hyperplane
  $ (n\omega,\omega) $, $ \omega\in\R^d $, and therefore
  \begin{equation*}
    |\Proj_{L_{n,\alpha}} e_j|\le \frac{1}{|n|}<\frac{\epsilon}{100}\text{ for all } \alpha,e_j, |n|\ge |J|^B
  \end{equation*}
  ($ e_j $ are as in \cref{lem:Bourgain-slopes}).  The conclusion
  follows by letting
  \begin{equation*}
    \Omega_{I,J,\theta_0,B}= \{ \omega : (\omega,\{ \theta_0+n\omega  \})\in \cS' \text{ for some }
    |J|^B\le |n|\le\exp(c|I|^{\sigma})  \}.
  \end{equation*}
  Note that by \cref{lem:Bourgain-slopes} we have
  \begin{multline*}
    \mes(\Omega_{I,J,\theta_0,B})\le \mes(\Proj_\omega \cS_1')+\sum_{\alpha,n} \mes(\cS_2'\cap L_{n,\alpha})\\
    \lesssim \frac{|J|^C}{|J|^B}+\sum_n |n|^d |J|^{C+B}\exp(-c|I|^{\sigma})\le \frac{|J|^C}{|J|^B}
  \end{multline*}
  provided the constant $ c $ in the upper bound of $ |n| $ is small
  enough.
\end{proof}

\begin{remark}\label{rem:discrete-vs-continuous-elimination}
  We assume the notation from the proof of the previous Proposition.
  Following the discrete case strategy (see \cite[Ch. 10]{Bou05}) we
  could set things up so that the set $ \cB $ used for elimination is
  determined by
  \begin{equation}\label{eq:discrete-elimination}
    \begin{gathered}
      v_{a'}(b';\theta_0,\omega,E)=0,\\
      \log\norm{M_I(\theta,\omega,E)}\le
      |I|L(\omega,E)-|I|^{1-\sigma},
    \end{gathered}
  \end{equation}
  where $ v_{a'}(b';\theta_0,\omega,E) $ plays the same role as the finite
  volume Dirichlet determinant did in the discrete case.  The benefit
  of this set-up is that the first equation restricts $ E $ to a
  finite set of values (the eigenvalues in the interval $ [E',E''] $),
  which lets us project onto $ (\theta,\omega) $ and get a small set (of
  course, we would also need an estimate for the number of
  eigenvalues in $ [E',E''] $). The problem is that when we pass to
  the semialgebraic approximation the first equality becomes an inequality and the
  previous reasoning breaks. In the discrete case one approximates the
  potential by a polynomial $ \tilde V $ and as a result one gets a
  new operator $ \tilde H $ to which one can apply the reasoning that
  leads to \cref{eq:discrete-elimination} for the semialgebraic
  approximation in the same way as for $ H $. This doesn't work in the
  continuous case because we are forced to approximate the solutions,
  rather than the potential.
\end{remark}

\section{Proof of the Main Result}\label{sec:main-proof}

We are now ready to prove \cref{thm:localization}.

Let $ N_0=N_0(V,d,E',E'',\gamma) $ and $ C_0=C_0(V,d,E',E'',\gamma) $
be large enough and define $ N_k=(N_{k-1})^{C_0} $, $ k\ge 1 $. Let
$ \Theta_k $, $ \Omega_{k,\theta} $ be the sets from
\cref{prop:elimination-fixed-phase} with $ J_k=[-N_{k+1},N_{k+1}] $,
$ I_k=[-N_k,N_k] $ and $ B $ such that
$ |J_k|^B\in[N_{k+2}/4, N_{k+2}/2 ] $, Note that we have
\begin{equation*}
  \mes (\Theta_k)\le \exp(-cN_{k+1}),\ \mes(\Omega_{k,\theta})\le N_{k+2}^{-1/2}.
\end{equation*}
Let
\begin{equation*}
  \Theta=\bigcap_{\underline k=0}^{\infty} \bigcup_{k\ge \underline k} \Theta_k.
\end{equation*}
Given $ \theta\in \T^d\setminus\Theta $ there exists $ k_0 $ such that
$ \theta\in \T^d\setminus \Theta_k $, $ k\ge k_0 $.  Let
\begin{equation*}
  \Omega_\theta=\bigcap_{\underline k=k_0}^{\infty} \bigcup_{k\ge \underline k} \Omega_{k,\theta}.
\end{equation*}
By Borel-Cantelli we clearly have that
$ \mes(\Theta)=\mes(\Omega_\theta)=0 $.

Let $ \theta\in\T^d\setminus \Theta $,
$ \omega\in\DC\setminus \Omega_{\theta}$. It is well known that the
energies with polynomially bounded solutions are dense in the spectrum
(see \cite[Cor. C.5.5]{Sim82}). So, given $ E\in[E',E''] $ so that
there exists $ y\not \equiv 0 $ satisfying $ H(\theta,\omega)y=Ey $
and
\begin{equation}\label{eq:polynomial-bound}
  |y(t)|\le(1+|t|)^C,
\end{equation}
it is enough to show that $ y $ decays exponentially.

If
\begin{equation*}
  \log\norm{M_{J_k}(\theta,\omega,E)}> |J_k|L(\omega,E)-|J_k|^{1-\sigma/2}
\end{equation*}
for infinitely many $ k $, then \cref{prop:Green-decay} together with
Poisson's formula and \cref{eq:polynomial-bound} imply that
$ y\equiv 0 $. Therefore, for $ k $ large enough we must have
\begin{equation*}
  \log\norm{M_{J_k}(\theta,\omega,E)}\le |J_k|L(\omega,E)-|J_k|^{1-\sigma/2}
\end{equation*}
and by \cref{prop:elimination-fixed-phase}
\begin{equation*}
  \log\norm{M_{I_k}(\theta+n\omega,\omega,E)}> |I_k|L(\omega,E)-|I_k|^{1-\sigma},\ N_{k+1}/2\le |n|\le 2N_{k+2}. 
\end{equation*}
Using \cref{prop:Green-decay} we can iterate Poisson's formula as in
\cref{lem:Poisson-iteration} and get that
\begin{equation*}
  |y(t)|\le (1+|t|)^C\exp(-c|t|L(\omega,E))\le \exp(-c|t|L(\omega,E)/2),\ |t|\in[N_{k+1},N_{k+2}]. 
\end{equation*}
This concludes the proof.

\appendix \label{sec:appendix}

\section{Appendix} \label{sec:appendix} 

Before we prove the large deviations estimate we need to recall the following result from \cite{GolSch01}.
\begin{theorem}[{\cite[Thm. 8.5]{GolSch01}}]\label{thm:GS-Cartan-estimate}
  Let $ d $ be a positive number.
  Suppose $ u:D(0,2)^d\to[-1,1] $ is subharmonic in each variable. Given $ r\in(0,1) $
  there exists a polydisk
  \begin{equation*}
    \Pi=D(x_1^{(0)},r)\times \ldots \times D(x_d^{(0)},r) \subset \C^d
  \end{equation*}
  with $ x_1^{(0)},\ldots,x_d^{(0)}\in[-1,1] $ and a Cartan set $ \cB\in \Car_d(H) $, $ H=\exp(-r^{-\beta}) $
  so that
  \begin{equation}\label{eq:GS-deviation}
    |u(z)-u(z')|\lesssim r^\beta\text{ for all } z,z'\in\Pi\setminus\cB.
  \end{equation}
  The constant $ \beta>0 $ depends only on the dimension $ d $.
\end{theorem}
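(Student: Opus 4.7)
The plan is to establish the theorem by induction on the dimension $d$, reducing to the one-dimensional Cartan estimate for logarithmic potentials.

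For the base case $d=1$, I would start from the Riesz representation of $u$ on a slightly smaller disk, say on $D(0,3/2)$:
$$ u(z) = \int \log|z-\zeta|\,d\mu(\zeta) + h(z), $$
where $\mu$ is the Riesz measure of $u$ and $h$ is harmonic. Because $u$ is bounded in absolute value by $1$, both $\norm{\mu}$ and $\sup|h|$ are controlled by absolute constants (standard Riesz mass estimates together with Harnack). A pigeonhole argument applied to a cover of $[-1,1]$ by disjoint disks of radius $3r$ then produces a center $x^{(0)}\in[-1,1]$ with $\mu(D(x^{(0)},3r))\lesssim r$. On the disk $\Pi=D(x^{(0)},r)$ I would split $\mu=\mu_1+\mu_2$ with $\mu_1=\mu|_{D(x^{(0)},3r)}$: the $\mu_2$-contribution and $h$ are harmonic on $\Pi$ with gradient bounded via Cauchy estimates on the surrounding annulus, hence vary by at most $C r$ on $\Pi$; the $\mu_1$-contribution is a logarithmic potential of a small measure, and the classical Cartan lemma produces a finite union of disks (a $\Car_1$ set with the specified $H$) outside of which this potential exceeds $-r^\beta$ in absolute value. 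Combining the three estimates gives the desired variation bound on $\Pi\setminus\cB$.

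For the inductive step, assume the theorem for dimensions $<d$. I would first apply the $d=1$ version to the averaged slice
$$ U(z_1):=\int_{[-1,1]^{d-1}} u(z_1,x_2,\ldots,x_d)\,dx_2\cdots dx_d, $$
which is subharmonic in $z_1$ and takes values in $[-1,1]$, to obtain a center $x_1^{(0)}$, a disk $\Pi^{(1)}=D(x_1^{(0)},r)$, and a 1D Cartan set $\cB^{(1)}$. Then for each fixed $z_1$ in a dense grid in $\Pi^{(1)}$ I would apply the induction hypothesis to the subharmonic-in-each-variable function $u(z_1,\cdot)$ on $D(0,2)^{d-1}$, producing a polydisk $\Pi^{(d-1)}(z_1)$ and a Cartan set $\cB^{(d-1)}(z_1)\in\Car_{d-1}(H)$. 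After a pigeonhole over the grid points to fix a single polydisk $\Pi^{(d-1)}$ that works for a dense subset of $z_1$, I would extend to all $z_1\in\Pi^{(1)}$ via Cauchy/Harnack-type continuity of the $\log\norm{M}$ (equivalently, the subharmonic function). The product polydisk $\Pi=\Pi^{(1)}\times\Pi^{(d-1)}$ together with the Cartan set built recursively from $\cB^{(1)}$ and the $\cB^{(d-1)}(z_1)$ lies in $\Car_d(H)$ essentially by the definition in Section~5. Variation between two points $z,z'\in\Pi\setminus\cB$ is then controlled by writing $u(z)-u(z')=[u(z)-u(z_1',z_2,\ldots,z_d)]+[u(z_1',z_2,\ldots,z_d)-u(z')]$ and bounding each term by the 1D and $(d-1)$-dimensional cases respectively.

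The hard part will be the uniform choice of $\Pi^{(d-1)}$: the induction hypothesis yields a polydisk that a priori depends on $z_1$, so passing to a single polydisk valid for all of $\Pi^{(1)}$ requires either a careful discretization/pigeonhole argument or a continuity/Vitali-covering refinement. Propagating the exponent $\beta$ through the induction (it will degrade with $d$, hence the final $\beta=\beta(d)$ in the statement) and ensuring that the mass of the combined Cartan set is controlled by the target $\Car_d$ parameter are the other two places where care is needed; both are bookkeeping once the uniformity issue is resolved.
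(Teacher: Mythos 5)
The paper quotes this statement directly from \cite[Thm.~8.5]{GolSch01} and does not reproduce the proof, so there is no in-paper argument to compare against; the following reviews your proposal on its own merits.

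Your one-dimensional base case is essentially correct and standard: Riesz representation on a slightly smaller disk, pigeonhole over a cover of $[-1,1]$ to find a disk of small Riesz mass, split the potential into a near part (handled by the classical Cartan lemma) and a far part plus harmonic remainder (handled by gradient bounds on a surrounding annulus).

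The inductive step, however, has two genuine gaps, and you flag only the first. (i) The induction hypothesis applied to $u(z_1,\cdot)$ produces a polydisk $\Pi^{(d-1)}(z_1)$ whose centers may depend on $z_1$ in an entirely uncontrolled way; there is no reason a single polydisk should serve even a positive-measure set of $z_1$, and the suggested ``pigeonhole over a dense grid, then extend by continuity'' cannot close this, because the Cartan set $\cB^{(d-1)}(z_1)$ has no continuity in $z_1$ and the pointwise inequality \cref{eq:GS-deviation} does not propagate from a dense set of $z_1$ to a neighbourhood. (ii) The final telescoping needs a one-dimensional estimate for the \emph{slice} $u(\cdot,z_2,\ldots,z_d)$ at the frozen point, but your base case was applied to the \emph{averaged} function $U$; smallness of the Riesz mass of $U$ near $x_1^{(0)}$ does not by itself give smallness of the slice's Riesz mass there. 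The way out --- and, I believe, the route taken in \cite{GolSch01} --- is to choose all centers $x_1^{(0)},\ldots,x_d^{(0)}$ simultaneously by pigeonholing on the Riesz masses averaged over the remaining coordinates, and then to observe via Fubini and Chebyshev that, outside a small exceptional set of the other variables, the slice Riesz masses near the fixed centers are also small; those exceptional sets assemble into a $\Car_d$ set precisely because of the recursive definition in \cref{sec:Cartan}. Your ``pick $x_1^{(0)}$ first, then apply the $(d-1)$-dimensional result to each slice'' scheme does not admit this uniformization.
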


We will also need the following fact about the discrepancy of the sequence of shifts of a Diophantine vector.
Let $ R=\prod_i [a_i,b_i]\subset [0,1]^d $. It is known (see \cite{Hla73}) that for $ \omega\in \DC_N $ we have
\begin{equation}\label{eq:discrepancy}
  \# \{ n : n\omega \in R, 1\le n \le N  \}= N \Vol(R)+C(d,DC) O(N^{1-1/A}\log^2 N).
\end{equation}

\begin{proof}[Proof of \cref{thm:ldt}]
  Let
  \begin{equation*}
    u(\theta)=\frac{\log\norm{M_{I}(\theta_0+\rho \theta+i\eta,\omega,E)}}{C |I|},\quad
    v(\theta)=\frac{\log\norm{M_{I}(\theta+i\eta,\omega,E)}}{C |I|},
  \end{equation*}
  where $ \theta_0=(1/2,\ldots,1/2)\in \T^d $.
  We choose $ \rho=\rho(V) $ such that $ u $ is defined on $ D(0,2)^d $ and $ C=C(V,|E|) $ such that
  \begin{equation*}
    |u(\theta)|\le 1, \theta\in D(0,2)^d
  \end{equation*}
  and
  \begin{equation}\label{eq:v-invariance}
    |v(\theta)|\le 1, |v(\theta)-v(\theta+\omega)|\le \frac{1}{|I|}, \theta\in \T^d
  \end{equation}
  (recall that we have \cref{eq:transfer-matrix-bound} and \cref{eq:almost-invariance}).
  Applying \cref{thm:GS-Cartan-estimate} to $ u $ with $ r\in(0,1) $ we get that there exists
  $ R=x^{(0)}+[-\rho r,\rho r]^d\subset [0,1]^d $ such that
  \begin{equation}\label{eq:v-deviation}
    |v(\theta)-v(\theta')|\lesssim r^\beta, \theta,\theta'\in R\setminus \cB',
  \end{equation}
  with $ \mes(\cB')\lesssim d \rho^d \exp(-r^{-\beta}) $. Note that in terms of the notation of
  \cref{thm:GS-Cartan-estimate} we have $ R=\Pi\cap \R^d $, $ \cB'=\cB\cap \R^d $, and the measure
  estimate for $\cB' $ follows from \cref{lem:Cartan-measure}.

  It follows from \cref{eq:discrepancy} that for any $ \theta\in \T^d $ there exists
  \begin{equation*}
    k\le k_0:=[C(V,d,\DC) r^{-2dA}] \text{ such that }\theta+k\omega\in R
  \end{equation*}
  (the factor of $ 2 $ in the exponent of $ r $ can be replaced by $ 1+\epsilon $).
  Therefore, as a consequence of \cref{eq:v-invariance} and \cref{eq:v-deviation} we have
  \begin{equation*}
    |v(\theta)-v(\theta')|<C(V,d,\DC) \left( r^\beta + \frac{r^{-2dA}}{|I|} \right),
    \theta,\theta'\in \T^d\setminus \tilde \cB
  \end{equation*}
  with
  \begin{equation*}
    \tilde \cB := \cup_{k=0}^{k_0} (\cB'+k\omega), \quad \mes(\tilde \cB)\le C(V,d,\DC) r^{-2dA}\exp(-r^{-\beta}).
  \end{equation*}
  Taking
  \begin{equation}\label{eq:r-bounds}
    |I|^{-\frac{1}{2dA+\beta}}\le r \le c(V,d,\DC)
  \end{equation}
  we have
  \begin{equation*}
    |v(\theta)-v(\theta')|<C r^\beta, \theta,\theta'\in\T^d\setminus\tilde \cB, \quad
    \mes(\tilde \cB)\le \exp(-r^{-\beta}/2).
  \end{equation*}
  It is now straightforward to see that
  \begin{multline*}
    \mes \{\theta : |\log\norm{M_{I}(\theta+i\eta,\omega,E)}-|I| L_{I}(\eta,\omega,E)|>C |I| r^{\beta}  \}\\
    \le \exp(-r^{-\beta}/2), C=C(V,d,\DC,|E|).
  \end{multline*}
  The conclusion follows immediately by choosing $ r $ so that
  \begin{equation*}
    C |I| r^{\beta} = \epsilon |I|^{1-\sigma}.
  \end{equation*}
  Note that due to \cref{eq:r-bounds} we need to take $ \sigma<\beta/(2dA+\beta) $.  
\end{proof}

We use the following simple result to pass from continuous time Lyapunov exponents to discrete time Lyapunov exponents.
\begin{lemma}\label{lem:Lyapunov-I-to-N}
  Let $ I=[a,b] $. Then for any $ \omega\in\T^d $, $ E\in \C $, $ \eta\in\R^d $, $ \norm{\eta}\le \rho(V) $,
  $ n\in \Z  $, $ n\ge 1 $ we have
  \begin{equation*}
    |L_I(\eta,\omega,E)-L_n(\eta,\omega,E)|\le \frac{C(V,|E|)}{|I|}(|n-|I||+2).
  \end{equation*}
\end{lemma}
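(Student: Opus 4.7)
The plan is to directly compare the transfer matrices $M_n = M_{[0,n]}$ and $M_I = M_{[a,b]}$ at the pointwise level using the semigroup and shift properties, and then pass to the averaged logarithms.

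First, I would use the shift property \eqref{eq:M-shift}: for any integer $k$, $M_{k+I}(\theta,\omega,E) = M_I(\theta + k\omega, \omega, E)$. Averaging over $\theta \in \T^d$ (the integrand being periodic in $\Re \theta$, a shift by $k\omega$ in the $\theta$ variable is harmless) yields $L_{k+I}(\eta,\omega,E) = L_I(\eta,\omega,E)$. Choosing $k$ suitably, I can therefore assume without loss of generality that $a \in [0,1)$, so that $b = a + |I|$ with the shift parameter $a$ bounded by $1$. This is the key reduction that lets me align one endpoint of $I$ with the endpoints of $[0,n]$ up to a universal error.

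Next, I would use the semigroup property \eqref{eq:M-semigroup} to relate $M_I$ and $M_n$ pointwise, splitting into two cases. If $n \ge b$, write $M_n = M_{[b,n]}\, M_I\, M_{[0,a]}$; if $a \le n < b$, write $M_I = M_{[n,b]}\, M_n\, M_{[0,a]}^{-1}$. In each case the "correction" factors act over a total length at most $|n-|I||+2a \le |n-|I||+2$, so \eqref{eq:transfer-matrix-bound} and \eqref{eq:inverse-transfer-matrix-bound} give the pointwise bound
\[
\bigl|\log\|M_I(\theta+i\eta,\omega,E)\| - \log\|M_n(\theta+i\eta,\omega,E)\|\bigr| \le C(V,|E|)(|n-|I||+2),
\]
uniformly in $\theta$. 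Integrating over $\T^d$ yields the same bound for $\bigl||I|\,L_I(\eta,\omega,E) - n\, L_n(\eta,\omega,E)\bigr|$.

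Finally, to convert this into a bound on $|L_I - L_n|$ itself, I would decompose
\[
L_I - L_n = \frac{|I|\,L_I - n\,L_n}{|I|} + L_n \cdot \frac{n - |I|}{|I|},
\]
and estimate the second term using $|L_n| \le C(V,|E|)$, which follows directly from \eqref{eq:transfer-matrix-bound}. Both terms are then dominated by $C(V,|E|)(|n-|I||+2)/|I|$, giving the claim. The only nontrivial step is the case-splitting in the semigroup decomposition, and even that is elementary bookkeeping once the shift reduction is in place; no deep input beyond the basic Grönwall-type bounds of Section \ref{sec:transfer-matrix-formalism} is required.
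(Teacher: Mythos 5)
Your proof is correct and takes essentially the same route as the paper's: both compare $\log\|M_I\|$ and $\log\|M_n\|$ pointwise via the semigroup property \cref{eq:M-semigroup} together with the Gr\"onwall bounds \cref{eq:transfer-matrix-bound}--\cref{eq:inverse-transfer-matrix-bound}, then integrate and finish with the elementary algebraic rearrangement to pass from $\bigl||I|L_I - nL_n\bigr|$ to $|L_I - L_n|$. The only cosmetic difference is that you normalize $a\in[0,1)$ first by shift invariance \cref{eq:M-shift}, whereas the paper keeps $I$ fixed and compares against the integer-aligned interval $J=[[a],[a]+n]$ (using a symmetric-difference bound and then noting $L_J=L_n$ by the same shift property); both give the same $|n-|I||+2$ slop.
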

\begin{proof}
  By \cref{eq:M-semigroup} and the bounds on the transfer matrix and its inverse we have
  \begin{equation*}
    |\log\norm{M_I}-\log\norm{M_J}|\le C \left|I\cup J\setminus (I\cap J)\right|, C=C(V,|E|)
  \end{equation*}
  for any other closed finite interval $ J $. The conclusion follows by applying this fact with
  $ J=[[a],[a]+n] $ and the definition of the finite scale Lyapunov exponents.
\end{proof}

\begin{proof}[Proof of \cref{prop:Lyapunov-convergence-rate}]
  With the same proof as that of \cite[Lem. 10.1]{GolSch01} we have
  \begin{equation}\label{eq:L_n-vs-L}
    0\le L_n(\eta,\omega,E)-L(\eta,\omega,E)\le C(V,d,\DC,|E|) \frac{(\log n)^{1/\sigma}}{n}.
  \end{equation}
  Note that in fact the proof of \cite[Lem. 10.1]{GolSch01} only points out the adjustments that need
  to be made to the proof of \cite[Lem. 4.2]{GolSch01}; up to these adjustments the proof of
  \cite[Lem. 4.2]{GolSch01} works as is for our setting too. Furthermore, from the proof of
  \cite[Lem. 4.2]{GolSch01} we also have that
  \begin{equation}\label{eq:L_n-vs-L_2n}
    |L_n(\eta,\omega,E)-L_{2n}(\eta,\omega,E)|\le C(V,d,\DC,|E|)\frac{(\log n)^{1/\sigma}}{n}.
  \end{equation}
  The proof of \cref{eq:L_n-vs-L_2n} only relies on the large deviations estimate at scale
  $ \ell\simeq (\log n)^{1/\sigma} $ and therefore it is enough to have $ \omega\in \DC_\ell\supset \DC_n $.
  Furthermore, one only needs that $ L_{\ell}(\eta,\omega,E)\gtrsim \gamma $ and to have this it is enough to
  assume $ L_{n}(\eta,\omega,E)\ge \gamma $ (due to \cref{lem:Lyapunov-increasing}).
  
  The conclusions follow from \cref{eq:L_n-vs-L_2n} and \cref{eq:L_n-vs-L} together with \cref{lem:Lyapunov-I-to-N}.
\end{proof}

\begin{proof}[Proof of \cref{lem:Lyapunov-increasing}]
  Let $ m=[|I|]+2 $, $ n=[|J|]+1 $.
  We have $ m=kn+r $ and by subadditivity
  \begin{equation*}
    mL_m\le knL_n+rL_r.
  \end{equation*}
  It follows that
  \begin{multline*}
    L_n\ge L_m+\frac{r(L_m-L_r)}{kn}\ge L_m-\frac{r C(V,|E|)}{kn}
    \ge L_m-\frac{nC(V,|E|)}{m-n}\\
    \ge L_m-C(V,|E|)\frac{|J|+1}{|I|-|J|}.
  \end{multline*}
  The conclusion follows from \cref{lem:Lyapunov-I-to-N}.
\end{proof}

\bibliographystyle{alpha}
\bibliography{../Schroedinger}

\medskip
\noindent \textbf{Ilia Binder}: 
Department of Mathematics, University of Toronto,
Toronto, ON, M5S 2E4, Canada, \href{mailto:ilia@math.utoronto.ca}{ilia@math.utoronto.ca}

\medskip
\noindent \textbf{Damir Kinzebulatov}:
Department of Mathematics and Statistics, McGill University,  
Montreal, QC, H3A 0B9,
Canada,  \href{mailto:damir.kinzebulatov@mail.mcgill.ca}{damir.kinzebulatov@mail.mcgill.ca}

\medskip
\noindent \textbf{Mircea Voda}: Department of Mathematics, The University of Chicago,
Chicago, IL, 60637, U.S.A.,
\href{mailto:mircea.voda@math.uchicago.edu}{mircea.voda@math.uchicago.edu}

\end{document}